\definecolor{darkblue}{rgb}{0.0,0,0.7} 
\definecolor{chocolate}{rgb}{0.48, 0.25, 0.0}
\definecolor{gold}{rgb}{0.83, 0.69, 0.021}
\newenvironment{enumerate*}
  {\begin{enumerate}[(I)]
    \setlength{\itemsep}{10pt}
    \setlength{\parskip}{0pt}}
  {\end{enumerate}}
\newtheorem{theorem}{Theorem}[section]
\newtheorem{proposition}[theorem]{Proposition}
\newtheorem{conjecture}[theorem]{Conjecture}
\newtheorem{question}[theorem]{Question}
\newtheorem{lemma}[theorem]{Lemma}
\theoremstyle{definition}
\newtheorem{definition}[theorem]{Definition}
\newtheorem{example}[theorem]{Example}
\newcommand{\lorb}{\textsc{or}}
\newcommand{\landb}{\textsc{and}}
\newcommand{\negb}{\neg}
\newcommand{\MM}{\mathrm{Ung}}
\definecolor{NormalGreen}{RGB}{0,220,0}
\newcommand{\dfn}[1]{\textcolor{blue}{\emph{#1}}}
\newcommand{\PSPACE}{\mathrm{PSPACE}}
\newcommand{\SSS}{\mathrm{SS}}
\newcommand{\odddd}{\text{strange}}
\newcommand{\oddd}{\text{weird}}
\newcommand{\Pop}{\mathsf{Pop}}
\newcommand{\pat}{\mathrm{path}}
\newcommand{\cov}{\mathrm{cov}}
\newcommand{\Tam}{\mathrm{Tam}}
\newcommand{\Atniss}{{\bf A}}
\newcommand{\Eeta}{{\bf E}}
\newcommand{\N}{\mathrm{N}}
\newcommand{\E}{\mathrm{E}}
\newcommand{\U}{\mathrm{U}}
\newcommand{\D}{\mathrm{D}}
\newcommand{\Nim}{{\sf Nim}\xspace}
\newcommand{\Chomp}{{\sf Chomp}\xspace}
\newcommand{\Nibble}{{\sf Nibble}\xspace}
\definecolor{DarkGreen}{RGB}{0,220,0}
\definecolor{SkyBlue}{RGB}{0,175,255}
\definecolor{MyPurple}{RGB}{220,0,255}
\definecolor{MyOrange}{RGB}{255,160,0}
\begin{document}

\title{The Ungar Games}
\subjclass[2010]{}

\author[Colin Defant]{Colin Defant}
\address[]{Department of Mathematics, Massachusetts Institute of Technology, Cambridge, MA 02139, USA}
\email{colindefant@gmail.com}

\author[Noah Kravitz]{Noah Kravitz}
\address[]{Department of Mathematics, Princeton University, Princeton, NJ 08540, USA}
\email{nkravitz@princeton.edu}

\author[Nathan Williams]{Nathan Williams}
\address[]{Department of Mathematical Sciences, University of Texas at Dallas,  Richardson, TX 75080, USA}
\email{nathan.williams1@utdallas.edu}

\maketitle

\begin{abstract} 
Let $L$ be a finite lattice. Inspired by Ungar's solution to the famous \emph{slopes problem}, we define an \emph{Ungar move} to be an operation that sends an element $x\in L$ to the meet of $\{x\}\cup T$, where $T$ is a subset of the set of elements covered by $x$. We introduce the following \emph{Ungar game}. Starting at the top element of $L$, two players---Atniss and Eeta---take turns making nontrivial Ungar moves; the first player who cannot do so loses the game.  Atniss plays first.  We say $L$ is an \emph{Atniss win} (respectively, \emph{Eeta win}) if Atniss (respectively, Eeta) has a winning strategy in the Ungar game on $L$. We first prove that the number of principal order ideals in the weak order on $S_n$ that are Eeta wins is $O(0.95586^nn!)$. We then consider a broad class of intervals in Young's lattice that includes all principal order ideals, and we characterize the Eeta wins in this class; we deduce precise enumerative results concerning order ideals in rectangles and type-$A$ root posets. We also characterize and enumerate principal order ideals in Tamari lattices that are Eeta wins. Finally, we conclude with some open problems and a short discussion of the computational complexity of Ungar games.
\end{abstract}

\section{Introduction}\label{sec:intro}
\subsection{Poset Games}

In Gale's game \Chomp~\cite{gale1974curious}, we begin with a rectangular chocolate bar whose northwestmost \emph{carr\'e}\footnote{It appears that there is no generally accepted English word for this concept.  Hershey's has attempted to popularize the word ``pip,'' but this has not caught on.  The word ``square'' may be the closest approximation in standard English usage.} has been removed. Two players alternately take nonempty bites, where each bite consists of choosing a \emph{carr\'e} and eating all \emph{carr\'es} that lie weakly southeast of the chosen one. The first player who is left with nothing to eat is designated the loser. See~\Cref{fig:chomp_and_nibble} for an example.  Although it is easy to see (using a strategy-stealing argument, as described in Gale's original paper) that the first player can always guarantee a win in this game, describing an explicit winning strategy is open even for $3$-row chocolate bars~\cite{Zeilberger}. 

More generally, one can play \Chomp on a chocolate bar of an arbitrary skew partition shape; at this level of generality, the first player does not always have a winning strategy. In fact, \Chomp generalizes even further to finite posets (without mention to chocolate).  In the \dfn{poset game} played on the finite poset $P$, two players start with $P$ and then alternately remove nonempty principal upward-closed sets; the first player who is unable to make a move (i.e., who is left with the empty set) loses. \Nim, another notable example of a poset game, corresponds to the case where $P$ is a disjoint union of chains.

\newcommand{\pip}[2]{
\begin{scope}[shift={(#1,#2)}]
    \draw[color=chocolate,fill=chocolate!80!white,very thick] (0,0) -- (0,1) -- (1,1) -- (1,0) -- (0,0);
    \draw[color=chocolate,fill=chocolate,very thick] (0,0) -- (.2,.2) -- (.2,.8) -- (.8,.8) -- (1,1) -- (1,0) -- (0,0);
    \draw[color=chocolate!80!white,very thick,line cap=round] (.2,.2) -- (.8,.2) -- (.8,.8);
\end{scope}
}
\usetikzlibrary{arrows, decorations.pathmorphing}
\begin{figure}[htbp]
\scalebox{0.8}{
\begin{tikzpicture}[scale=2,
       shorten > = 1pt,auto,
   node distance = 3cm,
      decoration = {snake,   
                    pre length=3pt,post length=7pt,
                    }]
    \node (name) at (3,7.5) {\scalebox{1.25}{$\Chomp$}};
    \node (32) at (4,7) {\scalebox{0.5}{\begin{tikzpicture}\draw[fill=black!20,draw=none] (0,1)--(0,2)--(1,2)--(1,1)--(0,1);\draw[dashed,thick,color=chocolate] (1,2) -- (1,1) -- (0,1);\draw[dashed,thick,color=chocolate] (0,0) -- (3,0) -- (3,2) -- (0,2) -- (0,0);;\pip{1}{1};\pip{2}{1};\pip{0}{0};\pip{1}{0};\pip{2}{0};\end{tikzpicture}}};
    \node (22) at (3,6) [rectangle,draw,color=gold,fill=gold!70] {\scalebox{0.5}{\begin{tikzpicture}\draw[fill=black!20,draw=none] (0,1)--(0,2)--(1,2)--(1,1)--(0,1);\draw[dashed,thick,color=chocolate] (1,2) -- (1,1) -- (0,1);\draw[dashed,thick,color=chocolate] (0,0) -- (3,0) -- (3,2) -- (0,2) -- (0,0);\pip{1}{1};\pip{2}{1};\pip{0}{0};\pip{1}{0};\end{tikzpicture}}};
    \node (12) at (2,5) {\scalebox{0.5}{\begin{tikzpicture}\draw[fill=black!20,draw=none] (0,1)--(0,2)--(1,2)--(1,1)--(0,1);\draw[dashed,thick,color=chocolate] (1,2) -- (1,1) -- (0,1);\draw[dashed,thick,color=chocolate] (0,0) -- (3,0) -- (3,2) -- (0,2) -- (0,0);\pip{1}{1};\pip{2}{1};\pip{0}{0};\end{tikzpicture}}};
    \node (21) at (4,5) {\scalebox{0.5}{\begin{tikzpicture}\draw[fill=black!20,draw=none] (0,1)--(0,2)--(1,2)--(1,1)--(0,1);\draw[dashed,thick,color=chocolate] (1,2) -- (1,1) -- (0,1);\draw[dashed,thick,color=chocolate] (0,0) -- (3,0) -- (3,2) -- (0,2) -- (0,0);\pip{1}{1};\pip{0}{0};\pip{1}{0};\end{tikzpicture}}};
    \node (02) at (1,4) {\scalebox{0.5}{\begin{tikzpicture}\draw[fill=black!20,draw=none] (0,1)--(0,2)--(1,2)--(1,1)--(0,1);\draw[dashed,thick,color=chocolate] (1,2) -- (1,1) -- (0,1);\draw[dashed,thick,color=chocolate] (0,0) -- (3,0) -- (3,2) -- (0,2) -- (0,0);\pip{1}{1};\pip{2}{1};\end{tikzpicture}}};
    \node (11) at (3,4) [rectangle,draw,color=gold,fill=gold!70] {\scalebox{0.5}{\begin{tikzpicture}\draw[fill=black!20,draw=none] (0,1)--(0,2)--(1,2)--(1,1)--(0,1);\draw[dashed,thick,color=chocolate] (1,2) -- (1,1) -- (0,1);\draw[dashed,thick,color=chocolate] (0,0) -- (3,0) -- (3,2) -- (0,2) -- (0,0);\pip{1}{1};\pip{0}{0};\end{tikzpicture}}};
    \node (01) at (2,3) {\scalebox{0.5}{\begin{tikzpicture}\draw[fill=black!20,draw=none] (0,1)--(0,2)--(1,2)--(1,1)--(0,1);\draw[dashed,thick,color=chocolate] (1,2) -- (1,1) -- (0,1);\draw[dashed,thick,color=chocolate] (0,0) -- (3,0) -- (3,2) -- (0,2) -- (0,0);\pip{1}{1};\end{tikzpicture}}};
    \node (10) at (4,3) {\scalebox{0.5}{\begin{tikzpicture}\draw[fill=black!20,draw=none] (0,1)--(0,2)--(1,2)--(1,1)--(0,1);\draw[dashed,thick,color=chocolate] (1,2) -- (1,1) -- (0,1);\draw[dashed,thick,color=chocolate] (0,0) -- (3,0) -- (3,2) -- (0,2) -- (0,0);\pip{0}{0};\end{tikzpicture}}};
    \node (0) at (3,2) [rectangle,draw,color=gold,fill=gold!70] {\scalebox{0.5}{\begin{tikzpicture}\draw[fill=black!20,draw=none] (0,1)--(0,2)--(1,2)--(1,1)--(0,1);\draw[dashed,thick,color=chocolate] (0,0) -- (3,0) -- (3,2) -- (0,2) -- (0,0);\draw[dashed,thick,color=chocolate] (1,2) -- (1,1) -- (0,1);\end{tikzpicture}}};
    \draw[->,very thick] (32) to (22);
    \draw[->,very thick] (32) to (21);
    \draw[->,very thick] (32) to[bend right=35] (12);
    \draw[->,very thick] (32) to[bend left=30] (10);
    \draw[->,very thick] (32) to[bend right=45] (02);
    \draw[->,very thick] (22) to (12);
    \draw[->,very thick] (22) to (21);
    \draw[->,very thick] (22) to (10);
    \draw[->,very thick] (22) to[bend right=30] (02);
    \draw[->,very thick] (12) to (02);
    \draw[->,very thick] (12) to (11);
    \draw[->,very thick] (12) to[bend right=30] (10);
    \draw[->,very thick] (21) to (11);
    \draw[->,very thick] (02) to (01);
    \draw[->,very thick] (02) to[bend right=30] (0);
    \draw[->,very thick] (21) to (10);
    \draw[->,very thick] (21) to[bend right=35] (01);
    \draw[->,very thick] (11) to (10);
    \draw[->,very thick] (11) to (01);
    \draw[->,very thick] (10) to (0);
    \draw[->,very thick] (01) to (0);
\end{tikzpicture}}\qquad\qquad
\scalebox{0.8}{\begin{tikzpicture}[scale=2,
       shorten > = 1pt,auto,
   node distance = 3cm,
      decoration = {snake,   
                    pre length=3pt,post length=7pt,
                    }]
    \node (name) at (3,7.5) {\scalebox{1.25}{$\Nibble$}};
    \node (32) at (4,7) [rectangle,draw,color=gold,fill=gold!70] {\scalebox{0.5}{\begin{tikzpicture}\draw[fill=black!20,draw=none] (0,1)--(0,2)--(1,2)--(1,1)--(0,1);\draw[dashed,thick,color=chocolate] (1,2) -- (1,1) -- (0,1);\draw[dashed,thick,color=chocolate] (0,0) -- (3,0) -- (3,2) -- (0,2) -- (0,0);;\pip{1}{1};\pip{2}{1};\pip{0}{0};\pip{1}{0};\pip{2}{0};\end{tikzpicture}}};
    \node (22) at (3,6) {\scalebox{0.5}{\begin{tikzpicture}\draw[fill=black!20,draw=none] (0,1)--(0,2)--(1,2)--(1,1)--(0,1);\draw[dashed,thick,color=chocolate] (1,2) -- (1,1) -- (0,1);\draw[dashed,thick,color=chocolate] (0,0) -- (3,0) -- (3,2) -- (0,2) -- (0,0);\pip{1}{1};\pip{2}{1};\pip{0}{0};\pip{1}{0};\end{tikzpicture}}};
    \node (12) at (2,5) {\scalebox{0.5}{\begin{tikzpicture}\draw[fill=black!20,draw=none] (0,1)--(0,2)--(1,2)--(1,1)--(0,1);\draw[dashed,thick,color=chocolate] (1,2) -- (1,1) -- (0,1);\draw[dashed,thick,color=chocolate] (0,0) -- (3,0) -- (3,2) -- (0,2) -- (0,0);\pip{1}{1};\pip{2}{1};\pip{0}{0};\end{tikzpicture}}};
    \node (21) at (4,5) [rectangle,draw,color=gold,fill=gold!70] {\scalebox{0.5}{\begin{tikzpicture}\draw[fill=black!20,draw=none] (0,1)--(0,2)--(1,2)--(1,1)--(0,1);\draw[dashed,thick,color=chocolate] (1,2) -- (1,1) -- (0,1);\draw[dashed,thick,color=chocolate] (0,0) -- (3,0) -- (3,2) -- (0,2) -- (0,0);\pip{1}{1};\pip{0}{0};\pip{1}{0};\end{tikzpicture}}};
    \node (02) at (1,4) [rectangle,draw,color=gold,fill=gold!70] {\scalebox{0.5}{\begin{tikzpicture}\draw[fill=black!20,draw=none] (0,1)--(0,2)--(1,2)--(1,1)--(0,1);\draw[dashed,thick,color=chocolate] (1,2) -- (1,1) -- (0,1);\draw[dashed,thick,color=chocolate] (0,0) -- (3,0) -- (3,2) -- (0,2) -- (0,0);\pip{1}{1};\pip{2}{1};\end{tikzpicture}}};
    \node (11) at (3,4) {\scalebox{0.5}{\begin{tikzpicture}\draw[fill=black!20,draw=none] (0,1)--(0,2)--(1,2)--(1,1)--(0,1);\draw[dashed,thick,color=chocolate] (1,2) -- (1,1) -- (0,1);\draw[dashed,thick,color=chocolate] (0,0) -- (3,0) -- (3,2) -- (0,2) -- (0,0);\pip{1}{1};\pip{0}{0};\end{tikzpicture}}};
    \node (01) at (2,3) {\scalebox{0.5}{\begin{tikzpicture}\draw[fill=black!20,draw=none] (0,1)--(0,2)--(1,2)--(1,1)--(0,1);\draw[dashed,thick,color=chocolate] (1,2) -- (1,1) -- (0,1);\draw[dashed,thick,color=chocolate] (0,0) -- (3,0) -- (3,2) -- (0,2) -- (0,0);\pip{1}{1};\end{tikzpicture}}};
    \node (10) at (4,3) {\scalebox{0.5}{\begin{tikzpicture}\draw[fill=black!20,draw=none] (0,1)--(0,2)--(1,2)--(1,1)--(0,1);\draw[dashed,thick,color=chocolate] (1,2) -- (1,1) -- (0,1);\draw[dashed,thick,color=chocolate] (0,0) -- (3,0) -- (3,2) -- (0,2) -- (0,0);\pip{0}{0};\end{tikzpicture}}};
    \node (0) at (3,2) [rectangle,draw,color=gold,fill=gold!70] {\scalebox{0.5}{\begin{tikzpicture}\draw[fill=black!20,draw=none] (0,1)--(0,2)--(1,2)--(1,1)--(0,1);\draw[dashed,thick,color=chocolate] (0,0) -- (3,0) -- (3,2) -- (0,2) -- (0,0);\draw[dashed,thick,color=chocolate] (1,2) -- (1,1) -- (0,1);\end{tikzpicture}}};
    \draw[->,very thick] (32) to (22);
    \draw[->,very thick] (22) to (12);
    \draw[->,very thick] (22) to (21);
    \draw[->,very thick] (22) to (11);
    \draw[->,very thick] (12) to (02);
    \draw[->,very thick] (12) to (11);
    \draw[->,very thick] (12) to (01);
    \draw[->,very thick] (21) to (11);
    \draw[->,very thick] (02) to (01);
    \draw[->,very thick] (11) to (10);
    \draw[->,very thick] (11) to (01);
    \draw[->,very thick] (11) to (0);
    \draw[->,very thick] (10) to (0);
    \draw[->,very thick] (01) to (0);
\end{tikzpicture}}
\caption{Allowable moves in \Chomp (left) and in \Nibble (right).  Starting positions for which the second player has a winning strategy are indicated in gold.}
\label{fig:chomp_and_nibble}
\end{figure}
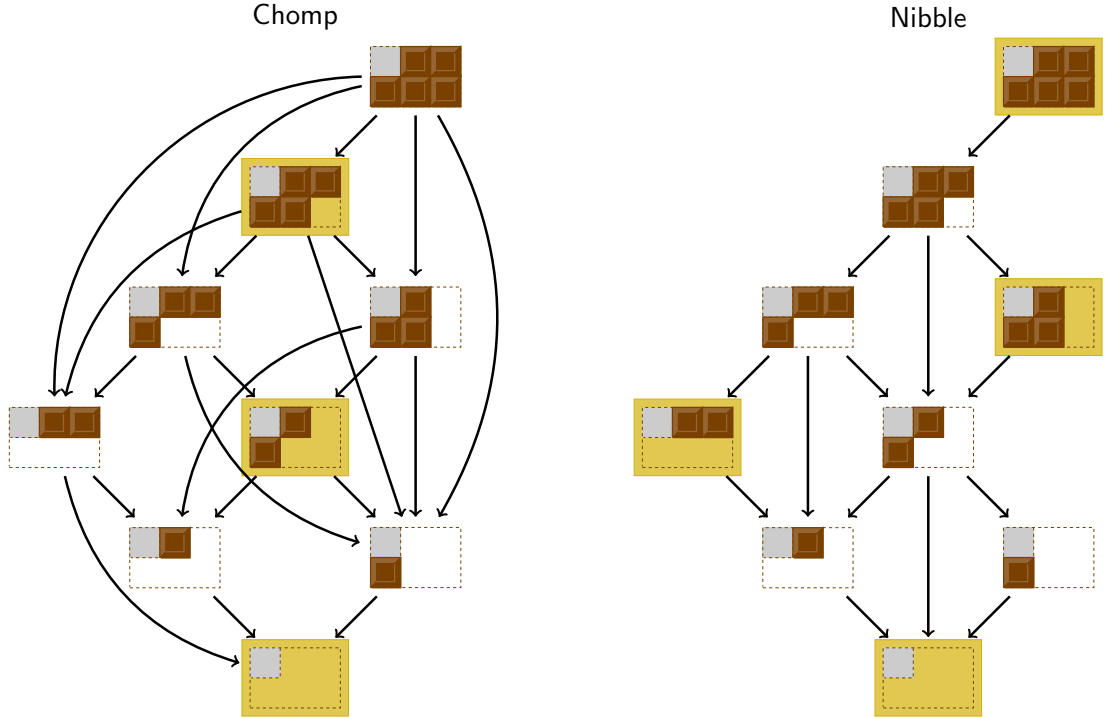

\subsection{Nibble}\label{subsec:nibble}

Consider now the following more genteel version of \Chomp, which we call \Nibble.  Instead of taking a boorishly large mouthful, a player may only politely nibble away at any number of exposed corner \emph{carr\'es} of the chocolate bar.  An example is illustrated in~\Cref{fig:chomp_and_nibble}.  A corollary of one of our main results (\Cref{thm:Young}) is a complete characterization of which player has a winning strategy when \Nibble is played on a chocolate bar in the shape of an arbitrary Young diagram.  

Just as \Chomp generalizes to arbitrary finite posets, \Nibble generalizes to arbitrary finite lattices; our next order of business is explaining this generalization.

\subsection{Ungar Moves}\label{subsec:UngarMoves}
In 1970, Scott \cite{Scott} asked for the minimum possible number of distinct slopes determined by a collection of $n\geq 4$ points in the plane that do not all lie on a single line. Ungar \cite{Ungar} solved this problem in 1982 by showing that the answer is $2\lfloor n/2\rfloor$. Building on an approach suggested by Goodman and Pollack \cite{Goodman}, Ungar considered projecting the collection of points onto a rotating line. At each  point in time, the ordering of the projected points along the line yields a permutation of the set $[n]=\{1,\ldots,n\}$. As the line rotates, the projected points sometimes swap positions in the ordering. (See \cref{fig:rotate}.) This idea allowed Ungar to work in a purely combinatorial setting in which he analyzed certain \emph{moves} that can be performed on permutations. Each such move reverses some disjoint consecutive decreasing subsequences of a permutation. For instance, we could reverse the consecutive decreasing subsequences $53$ and $641$ in the permutation $853297641$ to obtain the new permutation $835297146$. 

\begin{figure}[ht]
  \begin{center}{\includegraphics[height=10cm]{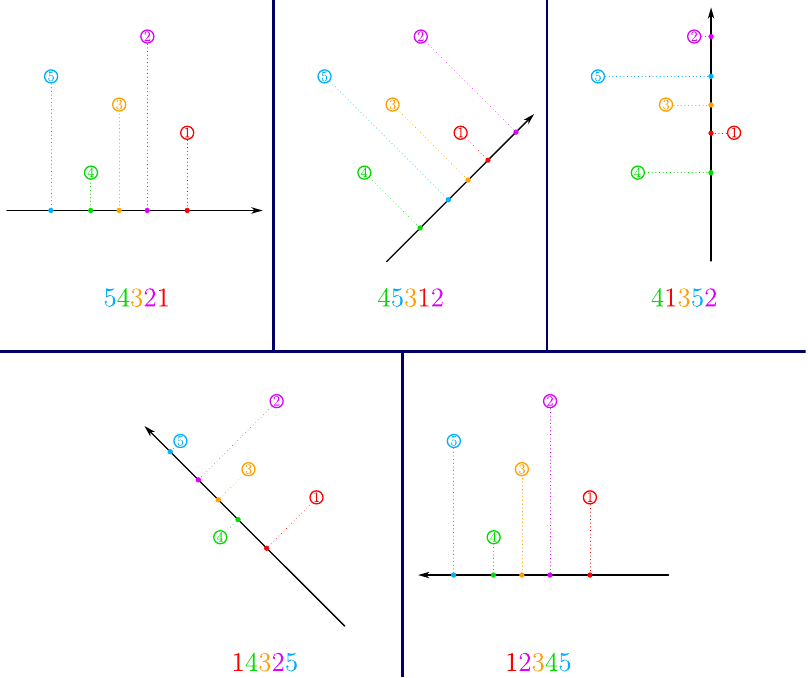}}
  \end{center}
  \caption{Five points in the plane are numbered ${\color{red}1},{\color{MyPurple}2},{\color{MyOrange}3},{\color{NormalGreen}4},{\color{SkyBlue}5}$. One can project the points onto a line and read the ordering of the projections along the line to obtain a permutation. When the line rotates, the associated permutation changes via an Ungar move.}\label{fig:rotate}
\end{figure}

Every poset $P$ in this article is assumed to have the property that $\{y\in P:y\leq x\}$ is finite for every $x\in P$. Given a poset $P$ and an element $x\in P$, we write $\cov_P(x)$ for the set of elements of $P$ that are covered by $x$. There is an equivalent way of formulating the moves that Ungar studied if we view the symmetric group $S_n$ as a lattice under the (right) weak order: a move sends a permutation $w\in S_n$ to the meet $\bigwedge\left(\{w\}\cup T\right)$, where $T\subseteq \cov_{S_n}(w)$. This observation leads to the following much more general definition from \cite{DefantLiUngarian}. 

\begin{definition}[\cite{DefantLiUngarian}]
Let $L$ be a meet-semilattice. An \dfn{Ungar move} is an operation that sends an element $x\in L$ to $\bigwedge (\{x\}\cup T)$ for some set $T\subseteq\cov_L(x)$. We say this Ungar move is \dfn{trivial} if $T=\emptyset$, and we say it is \dfn{maximal} if $T=\cov_L(x)$. 
\end{definition}

Given a meet-semilattice $L$ and an element $x\in L$, we write $\MM(x)$ for the set of elements of $L$ that can be obtained by applying an Ungar move to $x$.

Suppose $n\geq 4$, and, as before, view $S_n$ as a lattice under the weak order. Consider starting with the decreasing permutation with one-line notation $n(n-1)\cdots 1$ and applying nontrivial Ungar moves until reaching the identity permutation $12\cdots n$. Ungar proved that if the first Ungar move in this process is not maximal, then the total number of Ungar moves needed is at least $2\lfloor n/2\rfloor$~\cite{Ungar}. This allowed him to resolve Scott's original geometric problem about slopes. See \cite[Chapter~12]{PftB} for additional exposition about this result.

A meet-semilattice $L$ has an associated \dfn{pop-stack sorting operator} $\Pop_L\colon L\to L$, which acts on each element of $L$ by applying a maximal Ungar move.  The nomenclature comes from the fact that $\Pop_{S_n}$ coincides with a map that sends a permutation through a data structure called a \emph{pop-stack}---this map has been the object of considerable study in combinatorics and theoretical computer science \cite{Asinowski, Asinowski2, ClaessonPop, ClaessonPantone, Lichev, PudwellSmith}, and numerous recent articles have investigated pop-stack sorting operators on other interesting lattices \cite{ChoiSun, DefantCoxeterPop, DefantMeeting, Semidistrim, Hong, Sapounakis}. In his original paper \cite{Ungar}, Ungar also proved that the maximum number of iterations of $\Pop_{S_n}$ needed to send a permutation in $S_n$ to the identity is $n-1$.

In \cite{DefantLiUngarian}, the first author and Li studied \emph{Ungarian Markov chains}, which are random processes on lattices in which Ungar moves are applied randomly.  

\subsection{Ungar Games}

We can now describe our generalization of \Nibble.  Let $L$ be a finite lattice.  Starting at the top element $\hat 1\in L$, two players---Atniss and Eeta---take turns making nontrivial Ungar moves; the first player who cannot make a nontrivial Ungar move loses the game. We assume that Atniss goes first. Note that the game ends precisely when a player reaches the bottom element $\hat 0$ of $L$. In particular, Eeta wins if $|L|=1$. Observe that exactly one of the two players has a winning strategy in the Ungar game on $L$.  

\begin{definition}
We say a finite lattice $L$ is an \dfn{Atniss win} if Atniss has a winning strategy in the Ungar game played on $L$; otherwise, we say $L$ is an \dfn{Eeta win}. 
\end{definition}

Slightly abusing terminology, we also make the following definition when we have a fixed meet-semilattice. 

\begin{definition}
Given a meet-semilattice $L$ with a minimal element $\hat 0$, we say an element $x\in L$ is an \dfn{Atniss win} in $L$ if the interval $[\hat 0,x]$ in $L$ is an Atniss win; otherwise, we say $x$ is an \dfn{Eeta win} in $L$. Let $\Atniss(L)$ and $\Eeta(L)$ denote the set of Atniss wins in $L$ and the set of Eeta wins in $L$, respectively. 
\end{definition}

\begin{figure}[hb!]
  \begin{center}{\includegraphics[height=7.784cm]{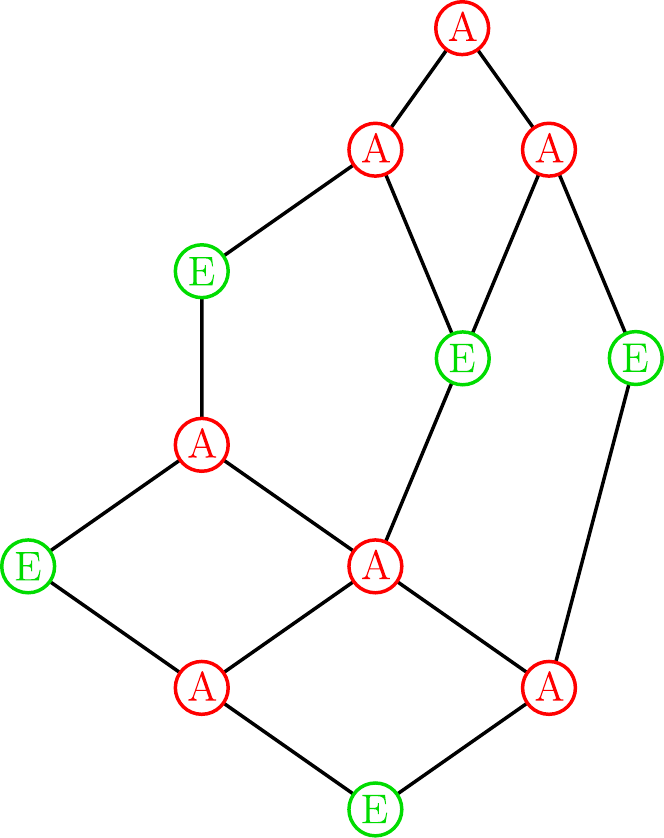}}
  \end{center}
\caption{A lattice with $7$ Atniss wins (labeled {\color{red}A}) and $5$ Eeta wins (labeled {\color{NormalGreen}E}). The entire lattice is an Atniss win.}\label{fig:arbitrary_lattice}
\end{figure}

One can determine the sets $\Atniss(L)$ and $\Eeta(L)$ recursively.  First, the bottom element $\hat 0$ is an Eeta win. In general, an element $x\in L$ is an Atniss win if there {\color{DarkGreen}e}xists an {\color{DarkGreen}E}eta win in $\MM(x)\setminus\{x\}$, while $x$ is an Eeta win if the elements of $\MM(x)\setminus\{x\}$ are {\color{red}a}ll {\color{red}A}tniss wins. See \Cref{fig:arbitrary_lattice}.

Our main focus in this article will be the characterization and the asymptotic and/or exact enumeration of Eeta (equivalently, Atniss) wins in various interesting lattices.  

\subsection{The Weak Order}
We begin by considering the weak order on $S_n$ since that is, after all, the context in which Ungar moves first arose. 

\begin{theorem}\label{thm:Weak}
We have $|\Eeta(S_n)|=O(0.95586^nn!)$. 
\end{theorem}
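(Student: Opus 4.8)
The plan is to reduce the count to direct-sum-indecomposable permutations, isolate a sufficient condition for a principal order ideal to be an Atniss win, and then bound the number of permutations violating that condition by a transfer-matrix argument.

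\emph{Reduction to indecomposables.} Write $w = u_1 \oplus \cdots \oplus u_k$ for the decomposition of $w$ into direct-sum-indecomposable blocks. Each block boundary is an ascent of $w$, so no consecutive decreasing run straddles a boundary; consequently the principal order ideal $[\hat 0, w]$ is isomorphic as a lattice to the product $[\hat 0, u_1] \times \cdots \times [\hat 0, u_k]$, and this isomorphism carries Ungar moves to Ungar moves (a move in the product consists of a simultaneous, not-all-trivial, choice of Ungar move in each factor, since covers in a product lattice change a single coordinate). I would then prove the general fact that in the Ungar game on a product $L_1 \times L_2$ of finite lattices, a position $(x_1,x_2)$ is a loss for the player to move if and only if $x_1$ and $x_2$ are losses for the player to move in $L_1$ and $L_2$ respectively --- a short induction on the sum of the ranks of $x_1$ and $x_2$, using that a move decreasing one coordinate necessarily lands in a winning position of that factor. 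Hence $w$ is an Eeta win if and only if every block $u_i$ is an Eeta win. Letting $a_m$ be the number of indecomposable Eeta wins in $S_m$, so that $1 + \sum_{n\ge 1}|\Eeta(S_n)|\,x^n = \bigl(1 - \sum_{m \ge 1} a_m x^m\bigr)^{-1}$, a standard estimate (in which the single-block term dominates) shows that $a_m = O(0.95586^m m!)$ implies $|\Eeta(S_n)| = O(0.95586^n n!)$. So it suffices to bound the number of indecomposable Eeta wins.

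\emph{A sufficient condition for an Atniss win.} This is the crux. By the previous paragraph and the recursive description of Atniss wins, it is enough to show that Atniss has a nontrivial Ungar move from $w$ to some $w'$ all of whose blocks are Eeta wins; convenient targets are direct sums of the smallest Eeta-win indecomposables ($1$, $231$, $312$, $2413$, $3412$, \ldots), or more generally permutations all of whose blocks have bounded size. I would look for a finite list of bounded consecutive configurations of positions and values in $w$ --- roughly, configurations from which a single reversal of a short decreasing run (or of a short disjoint union of such runs) splits off small Eeta pieces while leaving the rest of $w$ untouched --- and show that the presence of any one of them forces $w$ to be an Atniss win. Verifying such a certificate is a finite check of the available Ungar moves and of the resulting block structure, perhaps descending one or two plies into the game tree; essentially all of the game-theoretic content of \Cref{thm:Weak} lives here.

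\emph{Enumerating the exceptions.} Any indecomposable Eeta win avoids every certificate configuration from the previous paragraph. Since these are finitely many consecutive-pattern-type restrictions, the number of permutations of $[n]$ avoiding all of them is $\gamma^n n!\,(1 + o(1))$ for an explicit $\gamma < 1$ --- obtained via the Goulden--Jackson cluster method, equivalently a transfer operator, as the reciprocal of the smallest positive singularity of the associated exponential generating function. Optimizing the choice of certificate yields $\gamma \le 0.95586$, and combining this with the reduction to indecomposables completes the proof.

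\emph{Main obstacle.} The difficult step is finding the right certificate. It must be \emph{valid} --- the prescribed Ungar move provably reaches an Eeta position --- which is delicate because a single Ungar move applied to a generic permutation still leaves a large indecomposable block, so the splitting has to be engineered rather than read off; and it must be \emph{abundant} --- present in all but a $\gamma^n$-fraction of permutations. These two requirements are in tension, and the exponent $0.95586$, close to but strictly less than $1$, reflects exactly how far the trade-off can be pushed with a local certificate.
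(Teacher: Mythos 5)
Your high-level plan --- find a local ``certificate'' configuration whose presence forces an Atniss win, then bound the number of permutations avoiding it via consecutive-pattern enumeration --- is the right shape, and your reduction to indecomposable blocks (your product claim is the paper's \Cref{lem:product}) is correct, though it is not needed. But the proposal has a genuine gap exactly where you say the crux is: you never produce the certificate, and the mechanism you sketch for one is not the one that works. You propose to find a move from $w$ whose image splits into small blocks that are each Eeta wins; as you yourself note, a single Ungar move generically leaves a large indecomposable block, and you give no construction overcoming this. The paper's certificate is simply \emph{consecutive containment of $1324$} (more generally, of any pattern $1(m-1)(m-2)\cdots 2\,m$ with $m\ge 4$), and the reason it forces an Atniss win is not a splitting argument but a strategy-stealing one (\Cref{lem:pattern_avoidance}): if $w(i)w(i+1)\cdots w(i+m-1)$ standardizes to $1(m-1)\cdots 2\,m$, let $v$ be obtained from $w$ by reversing the decreasing run $w(i+1)\cdots w(i+m-2)$. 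This reversal destroys that maximal decreasing run without creating or altering any other, so $\MM(v)$ is exactly the set of positions reachable from $w$ by an Ungar move that includes reversing this run; in particular $\MM(v)\subseteq\MM(w)\setminus\{w\}$. Since $\MM(v)$ always contains an Eeta win (\Cref{lem:toP2}), Atniss has a winning first move from $w$. The reversal is a ``throwaway'' move: Atniss gets to make it and then still decide whether to continue.

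The second gap is the constant. In the paper, $0.95586$ is not the output of an optimization over candidate certificates; it is the known growth rate of the number of permutations consecutively avoiding $1324$ (OEIS A113228), quoted from the literature. Your assertion that ``optimizing the choice of certificate yields $\gamma\le 0.95586$'' presupposes the answer. With the correct certificate in hand, the proof of \Cref{thm:Weak} is two lines --- every Eeta win consecutively avoids $1324$, and there are $O(0.95586^n n!)$ such permutations --- so the reduction to indecomposables and the generating-function bookkeeping, while correct, are unnecessary detours.
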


Although the preceding theorem is not as precise as the results that we will derive for other lattices, it still shows that asymptotically almost all elements of $S_n$ are Atniss wins. 

\subsection{Intervals in Young's Lattice}\label{subsec:Young}

We write $J(P)$ for the lattice of finite order ideals of a poset $P$, ordered by containment. \dfn{Young's lattice} is $J(\mathbb N^2)$; equivalently, it is the lattice of integer partitions ordered by containment of Young diagrams. We tacitly identify integer partitions and skew partitions\footnote{Although it is customary to write $\lambda/\mu$ for the skew shape obtained by removing a Young diagram $\mu$ from a Young diagram $\lambda$, we will break with this convention and write $\lambda\setminus\mu$ instead. This is because we view $\lambda$ and $\mu$ as posets.} with their Young diagrams (which we draw using English conventions). Suppose $\mu$ and $\lambda$ are partitions with $\mu\leq\lambda$. We can view $\lambda\setminus\mu$ as a poset whose elements are the boxes of $\lambda\setminus\mu$; the order relation is such that $\Box\leq\Box'$ if and only if $\Box$ lies weakly northwest of $\Box'$. The interval $[\mu,\lambda]$ in Young's lattice is naturally isomorphic to $J(\lambda\setminus\mu)$, the lattice of order ideals of $\lambda\setminus\mu$. The Ungar game on $J(\lambda\setminus\mu)$ is equivalent to the game \Nibble played on a chocolate bar of shape $\lambda\setminus\mu$. 

A \dfn{lattice path} is a finite path that starts at a point in $\mathbb Z^2$ and uses unit north (i.e., $(0,1)$) steps and unit east (i.e., $(1,0)$) steps. We denote north steps by $\text{N}$ and east steps by $\text{E}$, and we identify lattice paths with finite words over the alphabet $\{\text{N},\text{E}\}$. A \dfn{block} of a lattice path is a maximal consecutive string of steps that have the same direction. For example, the blocks of the lattice path $\E\E\N\E\N\N\N\N$ are $\E\E$, $\N$, $\E$, and $\N\N\N\N$ (in that order).

Associated to a partition $\lambda$ is the lattice path $\pat(\lambda)$ obtained by traversing the southeast boundary of $\lambda$. More precisely, if $\lambda=(\lambda_1,\ldots,\lambda_k)$, where $\lambda_1\geq\cdots\geq\lambda_k\geq 1$, then \[\pat(\lambda)=\E^{\lambda_k}\N\E^{\lambda_{k-1}-\lambda_k}\N\cdots\E^{\lambda_1-\lambda_2}\N.\] 
The $n$-th \dfn{staircase} (for $n \geq 0$) is the partition $\delta_n=(n,n-1,\ldots,2,1)$; its associated lattice path is $\pat(\delta_n)=(\E\N)^n$.  

The following theorem treats a very large class of intervals in Young's lattice and characterizes which of them are Eeta wins. In particular, the case $\mu=\emptyset$ (and $n=0$) completely characterizes which elements of Young's lattice are Eeta wins. 

\begin{theorem}\label{thm:Young}
Consider an interval $[\mu,\lambda]$ in Young's lattice. Let $n$ be the smallest integer such that $\mu\leq\delta_n$. If $\delta_{n+1}\leq\lambda$, then the interval $[\mu,\lambda]$ is an Eeta win if and only if $\pat(\lambda)$ does not contain an odd-length block of east steps immediately followed by an odd-length block of north steps. 
\end{theorem}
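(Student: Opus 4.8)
The plan is to reduce the statement to the principal case $\mu=\emptyset$ and to handle that case by induction. Throughout I use the identification $[\mu,\lambda]\cong J(\lambda\setminus\mu)$, so a position is a partition $\nu$ with $\mu\leq\nu\leq\lambda$ and an Ungar move deletes a nonempty set of boxes that are outer corners of $\nu$ lying outside $\mu$. It is cleanest to track this on the boundary path: deleting an outer corner of $\nu$ turns one occurrence of $\E\N$ in $\pat(\nu)$ into $\N\E$, distinct outer corners give distinct (necessarily disjoint) occurrences of $\E\N$, and every occurrence of $\E\N$ in $\pat(\nu)$ arises this way; thus a single Ungar move flips a nonempty set of disjoint $\E\N$ factors to $\N\E$, subject only to the restriction that corners inside $\mu$ may not be flipped. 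Writing the distinct part sizes of $\lambda$ as $s_1>\cdots>s_p>s_{p+1}:=0$ with multiplicities $m_1,\dots,m_p$, so that $\pat(\lambda)=\E^{s_p}\N^{m_p}\E^{s_{p-1}-s_p}\N^{m_{p-1}}\cdots\E^{s_1-s_2}\N^{m_1}$, the forbidden pattern occurs in $\pat(\lambda)$ exactly when some index $i$ is \emph{bad}, meaning $m_i$ is odd and $s_i\not\equiv s_{i+1}\pmod 2$. So for $\mu=\emptyset$ the claim is: $J(\lambda)$ is an Eeta win iff $\lambda$ has no bad index, and since the condition only involves $\lambda$ this will contain the general statement once the reduction below is in place.

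For $\mu=\emptyset$ I would argue by strong induction on $|\lambda|$, checking that the partitions with no bad index are precisely the Eeta wins. This amounts to two closure properties. (a) If $\lambda$ has no bad index, then every Ungar move lands on a partition with a bad index. One first treats the deletion of a single outer corner, that of the part of size $s_i$: the new partition has a part $s_i-1$ of multiplicity one and the part $s_i$ with multiplicity $m_i-1$, with merges when $s_i-1=s_{i+1}$ or $m_i=1$; a short case check, using that $m_i$ is even or $s_i\equiv s_{i+1}$, shows a bad index always appears (at $s_i$ when $m_i$ was even, at $s_i-1$ when $s_i\equiv s_{i+1}$). One then extends this to an arbitrary nonempty set of deleted corners — the deletions interact only between consecutive part sizes, so it suffices to examine the modification at, say, the topmost deleted corner together with whatever happens just below it. (b) If $\lambda$ has a bad index, some Ungar move lands on a partition with no bad index. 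Here one exhibits the escaping move: delete the corner of a bad part, which neutralizes that index, and if this forces a merge (because $s_i-s_{i+1}=1$, or because $m_i=1$ and the part disappears) adjoin further corners to absorb the induced parity flip in the neighboring block; one checks this process terminates and removes all bad indices. The casework can be cut roughly in half using the fact that $\lambda\setminus\mu$ is isomorphic as a poset to its transpose and that the forbidden pattern — as well as the whole hypothesis of the theorem, since staircases are self-conjugate — is invariant under conjugation.

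Finally, the reduction. The hypothesis ($n$ minimal with $\mu\leq\delta_n$, and $\delta_{n+1}\leq\lambda$) is equivalent to saying that $\lambda\setminus\mu$ contains the full anti-diagonal $\delta_{n+1}\setminus\delta_n=\{(i,n+2-i):1\leq i\leq n+1\}$ while $\mu\subsetneq\delta_n$. Under this hypothesis I would show that the Ungar game on $J(\lambda\setminus\mu)$ has the same winner as the Ungar game on $J(\lambda)$, which together with the $\mu=\emptyset$ case finishes the proof. Morally, the extra boxes $\delta_n\setminus\mu$ sitting below the staircase are invisible to optimal play: a winning strategy in $J(\lambda)$ can be transferred to $J(\lambda\setminus\mu)$ because the parity of the game is already decided by what happens near the outer corners of $\lambda$ before play is ever forced down near $\mu$, and the anti-diagonal guaranteed by the hypothesis is exactly what makes the bottom region big enough for a safe response to exist at every stage. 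Making this precise is the technical heart of the argument, and it is the main obstacle: one cannot simply run the induction of the $\mu=\emptyset$ case on intervals $[\mu,\nu]$ directly, since deep sub-positions with $\delta_{n+1}\not\leq\nu$ lie outside the scope of the theorem (no smaller choice of parameters makes it apply) and their outcomes are genuinely not governed by $\pat(\nu)$ alone — so the hypothesis has to be exploited at the level of whole games rather than position by position. The secondary difficulty is the escape step (b), namely controlling the cascading merges triggered by deleting several corners at once.
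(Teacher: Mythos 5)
Your overall architecture---reduce to $\mu=\emptyset$, then characterize principal order ideals by induction on the boundary path---is the same as the paper's, but both load-bearing steps are left as acknowledged gaps rather than arguments. The more serious one is the reduction. You assert that $J(\lambda\setminus\mu)$ and $J(\lambda)$ have the same winner, call making this precise ``the main obstacle,'' and offer only the heuristic that the boxes below the staircase are ``invisible to optimal play.'' Moreover, your stated reason that a position-by-position induction cannot work is mistaken: the paper proves exactly such a statement (\Cref{thm:deep_poset}) by induction on $|\lambda|$. The idea you are missing is the throwaway-move lemma (\Cref{lem:deep_Atniss}). If some box $x$ is maximal in both $\lambda$ and $\delta_{n+1}$, then $\max(\lambda\setminus\{x\})=\max(\lambda)\setminus\{x\}$, because every non-maximal box of $\delta_{n+1}$ lies below at least two maximal boxes of $\delta_{n+1}$; by \Cref{lem:toP2} there is an Eeta win in $\MM(\lambda\setminus\{x\})$, and every element of $\MM(\lambda\setminus\{x\})$ is reachable from $\lambda$ in a single Ungar move, so $\lambda$ is an Atniss win---and the same argument applies verbatim to $\lambda\setminus\mu$ because $\mu\subseteq\delta_n$ misses $\max(\delta_{n+1})$. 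This disposes of exactly the ``deep'' positions you were worried about: you never need to know their outcomes, only that both games agree there. For all remaining positions $\max(\lambda)\cap\max(\delta_{n+1})=\emptyset$, so the Ungar moves in the two games correspond bijectively and every outcome still contains $\delta_{n+1}$, and the induction closes. Without this step (or a substitute) the theorem is not reduced to the principal case.

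The principal case $\mu=\emptyset$ also has gaps in both directions. In the forward direction, examining ``the topmost deleted corner together with whatever happens just below it'' does not suffice: for $\lambda=(6,4,2)$ (no bad index) with all three corners deleted, the result is $(5,3,1)$ with path $\E\N\E\E\N\E\E\N$, whose unique (odd,\,odd) maximal block-pair sits at the \emph{bottommost} deleted corner; the top two corners' modifications merge into harmless (even,\,odd) pairs. The paper's fix is to pass to the transpose so that some deleted corner has an even-length east-block and then to analyze the \emph{smallest-index} such deleted corner, checking separately whether the corner immediately below it was also deleted. For the converse, your cascade (``delete the bad corner and adjoin further corners to absorb the induced parity flip'') is neither precisely specified nor shown to terminate, and you flag it yourself as unresolved; note that the paper performs no such local surgery. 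Instead it strips the first one or two rows of $\lambda$, applies the induction hypothesis together with \Cref{lem:toP2} to obtain a set of corners of the truncated partition whose deletion yields an Eeta win, lifts that set back to $\lambda$, and adjusts the northeastmost corner if necessary---the winning set of corners is produced globally, not by propagating outward from a bad index. In short, the skeleton matches the paper, but each of the three essential steps (the reduction and both directions of the principal case) still requires the actual argument.
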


\begin{example}
Let $\mu$ be the partition $(3,1)$. Then $\mu\leq\delta_3$, but $\mu\not\leq\delta_2$. Therefore, we can apply \cref{thm:Young} whenever $\delta_4\leq\lambda$. 

If $\lambda=(5,4,2,2)$, then the Young diagram of $\lambda\setminus\mu$ is \[\begin{array}{l}\includegraphics[height=1.459cm]{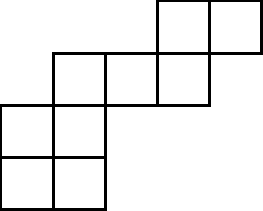}\end{array}.\] In this case, \cref{thm:Young} tells us that $[\mu,\lambda]$ is an Atniss win because $\pat(\lambda)=\E\E\N\N\E\E\N\E\N$ contains a block consisting of a single east step immediately followed by a block consisting of a single north step. 

On the other hand, if $\lambda=(6,4,2,2)$, then the Young diagram of $\lambda\setminus\mu$ is \[\begin{array}{l}\includegraphics[height=1.459cm]{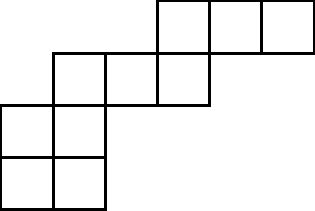}\end{array}.\] In this case, $\pat(\lambda)=\E\E\N\N\E\E\N\E\E\N$, so \cref{thm:Young} guarantees that $[\mu,\lambda]$ is an Eeta win. 
\end{example}

\Cref{thm:Young} has a somewhat surprising corollary. Namely, whether the lattice $[\mu,\lambda]\cong J(\lambda\setminus\mu)$ is an Atniss win or an Eeta win is independent of $\mu$ so long as $\lambda$ is ``deep enough'' in Young's lattice relative to $\mu$. This is actually a special case of the following much more general result. Let us write $\max(P)$ for the set of maximal elements of a poset $P$. 

\begin{theorem}\label{thm:deep_poset}
Let $P$ be a poset. Suppose $\delta,\lambda\in J(P)$ are such that $\delta\subseteq\lambda$ and every non-maximal element of $\delta$ is less than at least $2$ maximal elements of $\delta$. For every $\mu\in J(P)$ such that $\mu\subseteq \delta\setminus\max(\delta)$, the lattice $J(\lambda\setminus\mu)$ is an Atniss win if and only if the lattice $J(\lambda)$ is an Atniss win. 
\end{theorem}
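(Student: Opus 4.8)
The plan is to realize both lattices' games as one game with two different ``floors'' and then to argue that raising the floor from $\hat0=\emptyset$ to $\mu$ cannot change the outcome, because the modification is confined to a region lying strictly below $\delta$.

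\emph{Reformulation.} Identify $J(\lambda\setminus\mu)$ with the interval $[\mu,\lambda]$ of $J(P)$ via $I\mapsto I\setminus\mu$. For any poset $Q$ and any ideal $I\in J(Q)$, the ideals covered by $I$ are exactly the sets $I\setminus\{y\}$ with $y\in\max(I)$, and the meet in $J(Q)$ is intersection; hence a nontrivial Ungar move from $I$ deletes a nonempty subset of $\max(I)$. Therefore the Ungar game on $[\mu,\lambda]$ is precisely the Ungar game on $J(\lambda)$ together with the single extra rule that no move may delete an element of $\mu$ — equivalently, play is confined to the ideals $I$ with $\mu\subseteq I\subseteq\lambda$, and $\mu$ becomes the terminal position. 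Write $G$ for the ambient game on $J(\lambda)$ and $G^{\circ}$ for the confined game; we must show that the starting position $\lambda$ is an Atniss win in $G$ if and only if it is an Atniss win in $G^{\circ}$.

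\emph{Locating the discrepancy.} Call an ideal $I$ with $\mu\subseteq I\subseteq\lambda$ a \emph{boundary} ideal if $\max(I)\cap\mu\neq\emptyset$; these are exactly the positions where $G$ offers moves (namely, deleting some maximal elements including at least one element of $\mu$) that are illegal in $G^{\circ}$. If $x\in\max(I)\cap\mu$, then $x$ is non-maximal in $\delta$, so by hypothesis there are distinct $m_1,m_2\in\max(\delta)$ with $x<m_1$ and $x<m_2$; since $x$ is maximal in $I$ we get $m_1,m_2\notin I$, hence $\delta\not\subseteq I$. Thus no boundary ideal contains $\delta$, and of course no ideal avoiding $\mu$ contains $\delta$ either. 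It follows that every ideal $I\supseteq\delta$ has \emph{identical} move-sets in $G$ and $G^{\circ}$, and that every move from such an $I$ leads to an ideal still containing $\mu$ (the elements of $\mu$ are non-maximal in $\delta$, hence non-maximal in $I$, so they are never deleted). In particular all of this applies at $\lambda$.

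\emph{The inductive heart.} It remains to prove that $G$ and $G^{\circ}$ declare the same winner at every ideal $I$ with $\delta\subseteq I\subseteq\lambda$. I would do this by induction on $|I|$, proving simultaneously a sharper statement about all ideals $I$ with $\mu\subseteq I\subseteq\lambda$: the two games agree on those that contain $\delta$, while on those that contain $\mu$ but not $\delta$ their outcomes satisfy a precise local relationship (one that need not be equality — small examples exhibit genuine disagreement there, with the direction of the disagreement governed by whether $J(\mu)$ is an Atniss or an Eeta win). For an ideal $I\supseteq\delta$ the two games have the same moves, all landing on $\mu$-containing ideals whose outcomes on both sides are known by induction; the point is to check that, given the (possibly discrepant) outcomes of the $\delta$-avoiding successors together with the agreeing outcomes of the $\delta$-containing successors, $I$ receives the same label on both sides. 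Here the $2$-fold domination hypothesis is exactly what makes $\delta$ ``thick'' enough: a $\delta$-containing ideal always has moves to successors that realize whichever outcome is needed among those on which the two games already agree — for instance the maximal Ungar move deletes all of $\max(I)\supseteq\max(\delta)$ and hence drops below every maximal element of $\delta$, while its one-step-smaller variants supply further agreeing successors — so no pattern of disagreement among the remaining successors can flip the label at $I$.

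\emph{The main obstacle.} The genuinely delicate part is the bookkeeping of the previous paragraph: pinning down the exact relationship between $G$ and $G^{\circ}$ on the $\delta$-avoiding ideals and verifying that $2$-fold domination propagates agreement through \emph{every} $\delta$-containing ideal. This is the only place the hypothesis on $\delta$ enters, and it is essential: if some non-maximal element of $\delta$ were below just one maximal element, the boundary could reach $\delta$, and the conclusion already fails when $\delta$ is a two-element chain and $\mu$ is its bottom element (then $[\mu,\delta]\cong J(\delta\setminus\mu)$ is an Atniss win while $J(\delta)$ is an Eeta win). Everything else — the reformulation and the observation that all the ``new'' positions and moves sit strictly below $\delta$ — is straightforward.
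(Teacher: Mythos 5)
Your setup is sound and essentially matches the paper's: identifying the game on $J(\lambda\setminus\mu)$ with the game on $J(\lambda)$ confined to ideals containing $\mu$, observing that the two games have identical move-sets at every ideal containing $\delta$ (and that such moves never delete elements of $\mu$), and reducing to showing the two games agree at every $\delta$-containing ideal. But the ``inductive heart'' --- the only nontrivial step --- is not actually carried out, and the one concrete mechanism you offer for it is false. You assert that the maximal Ungar move from a $\delta$-containing ideal $I$ ``deletes all of $\max(I)\supseteq\max(\delta)$''; the containment $\max(I)\supseteq\max(\delta)$ fails in general (a small staircase $\delta$ sitting inside a large partition $I$ has none of its corners maximal in $I$). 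More fundamentally, your plan requires pinning down a ``precise local relationship'' between the two games on the $\delta$-avoiding successors and then showing that discrepancies there cannot flip the label at $I$; you never identify that relationship, and as described the argument cannot work for the Eeta direction: certifying $I$ as an Eeta win requires \emph{every} successor to be an Atniss win, so a single successor on which the two games disagree is fatal, and having additional agreeing successors ``that realize whichever outcome is needed'' is of no help there.

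The missing idea is a dichotomy that makes the discrepant successors disappear entirely. If some $x\in\max(\delta)$ is maximal in $I$, then the $2$-fold domination hypothesis gives $\max(I\setminus\{x\})=\max(I)\setminus\{x\}$, and the throwaway-move argument (\Cref{lem:deep_Atniss}, via \Cref{lem:toP2}) shows that $I$ is an Atniss win in \emph{both} games --- no comparison of successors is needed. If instead no element of $\max(\delta)$ is maximal in $I$, then no element of $\delta$ at all is maximal in $I$ (a non-maximal element of $\delta$ lies below two maximal elements of $\delta\subseteq I$), so every nontrivial move from $I$ preserves $\delta$ and lands on an ideal where the two games agree by induction; there are no $\delta$-avoiding successors to worry about. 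With this dichotomy your induction closes; without it, the proposal is a plan rather than a proof. (Your closing counterexample showing the hypothesis on $\delta$ is necessary is correct, but it does not substitute for the argument.)
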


Let us highlight two families of intervals in Young's lattice for which we will obtain especially nice enumerative results. Let $\rho_{a\times b}$ be the rectangular Young diagram that consists of $a$ rows of size $b$. Let $\Phi^+(A_{n})$ denote the root poset of type $A_{n}$. Then $\Phi^+(A_{n})$ is isomorphic as a poset to the skew shape $\rho_{n\times n}\setminus\delta_{n-1}$. 

\begin{theorem}\label{thm:rectangle_enumeration}
We have \[\sum_{a\geq 0}\sum_{b\geq 0}|\Eeta(J(\rho_{a\times b}))|x^by^a=\frac{(1+x)(1+y)}{1-(1+x)y^2-(1+y)x^2}.\]
\end{theorem}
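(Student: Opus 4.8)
The plan is to reduce the statement to a lattice-path enumeration and then to carry out that enumeration via a block-by-block generating-function decomposition.

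For the reduction, recall that an element of $J(\rho_{a\times b})$ is an order ideal of the rectangle poset $\rho_{a\times b}$, that is, (the Young diagram of) a partition $\mu$ contained in the $a\times b$ box, and that the interval $[\emptyset,\mu]$ in $J(\rho_{a\times b})$ is isomorphic to $J(\mu)$. The case $\mu=\emptyset$, $n=0$ of \Cref{thm:Young} applies to every nonempty such $\mu$, and $J(\emptyset)$ is an Eeta win, so $\mu$ lies in $\Eeta(J(\rho_{a\times b}))$ if and only if $\pat(\mu)$ contains no odd-length block of east steps immediately followed by an odd-length block of north steps. Let $P(\mu)$ be obtained from $\pat(\mu)$ by prepending $\N^{a-k}$, where $k$ is the number of parts of $\mu$, and appending $\E^{b-\mu_1}$ (with the convention $P(\emptyset)=\N^a\E^b$); this is exactly the boundary path of $\mu$ traversed inside the box, and $\mu\mapsto P(\mu)$ is a bijection from partitions contained in the $a\times b$ box onto lattice paths with exactly $a$ north steps and $b$ east steps. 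Deleting an initial block of north steps or a terminal block of east steps can neither create nor destroy the forbidden configuration, which begins with an east step and ends with a north step, so $\mu$ is an Eeta win if and only if $P(\mu)$ avoids it. Hence $|\Eeta(J(\rho_{a\times b}))|$ equals the number $f(a,b)$ of lattice paths with $a$ north steps and $b$ east steps in which no (maximal) odd-length block of east steps is immediately followed by a (maximal) odd-length block of north steps.

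To compute $\Phi(x,y)=\sum_{a,b\ge 0}f(a,b)x^by^a$, I would decompose a path into its blocks, which alternate in direction; the only obstruction to validity is local, namely that an odd-length east block must not be immediately followed by an odd-length north block. Marking east steps by $x$ and north steps by $y$, the generating functions for an odd east block, an even east block, an arbitrary north block, and an even north block are $\frac{x}{1-x^2}$, $\frac{x^2}{1-x^2}$, $\frac{y}{1-y}$, and $\frac{y^2}{1-y^2}$ respectively. Let $A$, $B$, $C$ be the generating functions for valid nonempty paths whose last block is, respectively, a north block, an odd east block, an even east block, so that $\Phi=1+A+B+C$. Since an east block may legally follow only the empty path or a path ending in a north block, while a north block following an odd east block must have even length, one obtains $B=(1+A)\frac{x}{1-x^2}$, $C=(1+A)\frac{x^2}{1-x^2}$, and $A=\frac{y}{1-y}+B\frac{y^2}{1-y^2}+C\frac{y}{1-y}$. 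Solving this linear system gives $1+A=\big((1-y)(1-\alpha)\big)^{-1}$ with $\alpha=\frac{xy^2}{(1-x^2)(1-y^2)}+\frac{x^2y}{(1-x^2)(1-y)}$, and since $B+C=(1+A)\frac{x}{1-x}$ we get $\Phi=(1+A)/(1-x)$. Clearing denominators and using the identity $(1-x^2)(1-y^2)-xy^2-x^2y(1+y)=1-(1+x)y^2-(1+y)x^2$ then yields the claimed formula.

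The last step is routine algebra. The step that requires genuine care, and the main obstacle, is the reduction: one must verify that $\mu\mapsto P(\mu)$ really is a bijection onto all lattice paths with the prescribed step counts and that trimming the leading block of north steps and the trailing block of east steps is harmless, taking care with the degenerate cases $k<a$, $\mu_1<b$, and $\mu=\emptyset$.
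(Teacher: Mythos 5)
Your proposal is correct and follows essentially the same route as the paper: pad the boundary path of the order ideal to a lattice path with exactly $a$ north and $b$ east steps, invoke \cref{thm:Young} to reduce to counting paths with no odd east block immediately followed by an odd north block, and enumerate by a block decomposition. The only cosmetic difference is in the bookkeeping: the paper groups consecutive blocks into pairs $\E^c\N^d$ and applies the sequence construction $1/(1-f)$ to the non-(odd,\,odd) ones, whereas you solve a three-state linear system indexed by the type of the last block; both yield the same rational function.
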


\begin{theorem}\label{thm:type-A-root}
We have \[\sum_{n\geq 1}|\Eeta(J(\Phi^+(A_{n})))|z^n=\frac{-1-2z+\sqrt{z^2-4z+2-2\sqrt{1-4z+4z^2-4z^3}}}{2z}.\]  Consequently, \[|\Eeta(J(\Phi^+(A_{n})))|\sim \frac{\gamma}{\sqrt{\pi}}n^{-3/2}\rho^{n+1},\] where \[\rho=\frac{6}{2-8(3\sqrt{57}-1)^{-1/3}+(3\sqrt{57}-1)^{1/3}}\approx 3.13040\] and \[\gamma=\frac{1}{4\sqrt{291}}\sqrt{576 + (1726130304 - 69393024 \sqrt{57})^{1/3} + 
 12 (998918 + 40158 \sqrt{57})^{1/3}}\approx 0.79594.\]
\end{theorem}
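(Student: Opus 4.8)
\textbf{Proof proposal for \Cref{thm:type-A-root}.}
The plan is to reduce the generating-function identity to a lattice-path count governed by \Cref{thm:Young}, solve the resulting functional equation, and then extract the asymptotics by singularity analysis. Using $\Phi^+(A_n)\cong\rho_{n\times n}\setminus\delta_{n-1}$, I identify $J(\Phi^+(A_n))$ with the interval $[\delta_{n-1},\rho_{n\times n}]$ in Young's lattice, so its elements are the partitions $\nu$ with $\delta_{n-1}\leq\nu\leq\rho_{n\times n}$ --- equivalently the lattice paths $\pat(\nu)$ fitting in the $n\times n$ box that weakly enclose the staircase $\delta_{n-1}$ (a Dyck-type condition, consistent with the Catalan count of order ideals of $\Phi^+(A_n)$). \Cref{thm:Young} applies to $[\delta_{n-1},\nu]$ exactly when $\delta_n\leq\nu$ (the relevant staircase index for $\mu=\delta_{n-1}$ being $n-1$), and it then says $\nu\in\Eeta(J(\Phi^+(A_n)))$ precisely when $\pat(\nu)$ contains no odd-length block of east steps immediately followed by an odd-length block of north steps. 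The remaining $\nu$ are those whose boundary path touches $\pat(\delta_{n-1})$ at an interior point; these I would handle by noting that if $\nu_j=(\delta_{n-1})_j$ then row $j$ of $\nu\setminus\delta_{n-1}$ is empty and the rows above and below it lie in disjoint column ranges, so $J(\nu\setminus\delta_{n-1})$ is a direct product of two lattices of the same kind for smaller type-$A$ root posets. Combined with the elementary fact that the Ungar game on a direct product of finite lattices is an Eeta win if and only if every factor is an Eeta win (an Ungar move on a product being a tuple of Ungar moves, at least one nontrivial), this reduces the ``shallow'' case to the generic one and yields a recursion.

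With the reduction in hand, I would set up $f(z)=\sum_{n\ge 1}|\Eeta(J(\Phi^+(A_n)))|z^n$ by decomposing each enclosing path at its returns to $\pat(\delta_{n-1})$: each primitive stretch between consecutive returns is, after deleting its two extreme steps, a path of the same type in a smaller box, while the stretches of $\pat(\nu)$ running along $\pat(\delta_{n-1})$ produce the empty-row splittings. Tracking the direction and the parity of the current maximal block with a small finite-state device to enforce the forbidden-block condition, this becomes a system of a few polynomial equations in $f$ and a handful of auxiliary series; eliminating the auxiliary series leaves a quadratic for $f$ whose discriminant is itself the square root of an expression containing a cubic. This is precisely the source of the nested radical $\sqrt{z^2-4z+2-2\sqrt{1-4z+4z^2-4z^3}}$, and solving the quadratic (choosing the branch with $f(0)=0$ and $[z^1]f=1$) gives the claimed closed form. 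I expect the main obstacle to be exactly this bookkeeping: arranging the shallow-case splitting and the block-parity automaton so that they assemble into a system small enough to eliminate by hand and that visibly reproduces the displayed radical.

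For the asymptotics I would invoke the transfer theorems of Flajolet and Odlyzko. Since the argument of the outer radical is analytic and positive at the relevant point, the dominant singularity of $f$ is the branch point of the inner radical, namely the unique real (and smallest-modulus) root $z_0$ of $1-4z+4z^2-4z^3=0$; equivalently $\rho=1/z_0$ is the real root of $t^3-4t^2+4t-4=0$, and Cardano's formula gives the displayed $\rho\approx 3.13040$. One checks $z_0$ is strictly dominant: the other two roots of the cubic form a complex-conjugate pair of larger modulus, and the outer radical's branch points $-4\pm2\sqrt{3}$ (the roots of $z^2+8z+4=0$) are negative of larger modulus. Writing $\sqrt{1-4z+4z^2-4z^3}=c_0\sqrt{1-z/z_0}\,(1+o(1))$ with $c_0$ fixed by the derivative of the cubic at $z_0$, and composing the analytic outer square root, produces a singular expansion $f(z)=a_0-b_0\sqrt{1-z/z_0}+o(\sqrt{1-z/z_0})$; the standard transfer then gives $[z^n]f(z)\sim\frac{b_0}{2\sqrt{\pi}}\,n^{-3/2}\rho^{n}$, and comparing with $\frac{\gamma}{\sqrt{\pi}}n^{-3/2}\rho^{n+1}$ forces $\gamma=b_0/(2\rho)$. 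Expressing $b_0$ through $z_0$, the value $z_0^2-4z_0+2$, and $c_0$ --- all algebraic over $\mathbb{Q}$ and writable in terms of $\sqrt{57}$ and the cube roots occurring in $\rho$ --- yields the stated $\gamma\approx 0.79594$. The only genuinely delicate point here is confirming that the dominant singularity is the inner branch point $z_0$ and not a zero of the argument of the outer radical; after that the computation is routine.
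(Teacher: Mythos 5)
Your overall strategy coincides with the paper's: identify $J(\Phi^+(A_n))$ with the interval $[\delta_{n-1},\rho_{n\times n}]$, split the boundary path at its contacts with the staircase into pieces corresponding to smaller type-$A$ root posets, apply \Cref{thm:Young} to each piece together with the product lemma (\Cref{lem:product}), and then enumerate the resulting class of paths by an algebraic system followed by singularity analysis. Your asymptotic analysis is also essentially the paper's and is correct in its particulars: the dominant singularity is the branch point of the inner radical at the positive real root of $1-4z+4z^2-4z^3$ (equivalently $\rho$ is the real root of $t^3-4t^2+4t-4$), the other two roots of that cubic have larger modulus, the argument of the outer radical vanishes at $z=0$ only to second order (so there is no branch point there) and otherwise only at $-4\pm2\sqrt{3}$, both of larger modulus than $1/\rho$, and the resulting square-root singular expansion transfers to the stated $n^{-3/2}\rho^{n+1}$ asymptotic.

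The gap is that the one genuinely substantive computation---the derivation of the algebraic system whose solution is the displayed generating function---is not carried out, and the shape you predict for it (``a quadratic for $f$'' over an auxiliary quadratic extension) is read off from the form of the answer rather than derived. The subtlety your ``finite-state device'' must capture is that deleting the steps lying on the staircase and on the axes changes the parity of the blocks adjacent to each contact: a block of the ambient path whose extreme step lies on the staircase contributes a block of the \emph{opposite} parity to the relevant piece $\eta^{(i)}$, so the forbidden configuration is not plain $(\text{odd},\text{odd})$-avoidance of the ambient path. The paper encodes this by passing to the Dyck path $\U\,\pat^*(\lambda)\,\D$ and introducing $\oddd$ and $\odddd$ runs (odd-and-not-touching-the-axis versus even-and-touching, and the analogous condition at the endpoints); it then needs five mutually recursive classes---$(\text{odd},\text{odd})$-, $(\oddd,\oddd)$-, $(\odddd,\odddd)$-, $(\text{odd},\odddd)$-, and $(\odddd,\text{odd})$-avoiding Dyck paths---related by first-return decompositions together with the reversal symmetry $G=H$, before elimination yields the degree-$4$ equation satisfied by $\underline{F}$, whence the nested radical. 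Until you exhibit such a system and verify that its solution is the claimed expression, the first displayed identity is unproved; everything downstream of it in your sketch is fine.
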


Since $|J(\Phi^+(A_{n}))|$ is the $(n+1)$-th Catalan number (which grows as $(4-o(1))^n$), the preceding theorem shows that $|\Eeta(J(\Phi^+(A_n)))|/|J(\Phi^+(A_n))|$ is decays exponentially in $n$.

\subsection{Tamari Lattices}\label{subsec:tamari}
Let $\Tam_n$ denote the $n$-th Tamari lattice. These lattices, which were introduced by Tamari \cite{Tamari} in 1962, are fundamental objects in algebraic combinatorics with connections to several other areas \cite{TamariBook}; they differ from the lattices discussed in the previous subsection because they are not distributive. We will characterize Eeta wins in Tamari lattices in \Cref{prop:Tamari_product,prop:Tamari_characterization}, and this will lead to the following exact enumeration.  

\begin{theorem}\label{thm:Tamari}
The generating function $F(z)=\sum_{n\geq 1}|\Eeta(\Tam_n)|z^n$ is algebraic of degree $4$: it satisfies the equation $Q(F(z),z)=0$, where \[Q(y,z)=z + (-1 + 3 z + z^2) y + (-2 + 2 z + 3 z^2) y^2 + 3 z^2 y^3 + z^2 y^4.\] Consequently, \[|\Eeta(\Tam_n)|\sim \frac{\gamma}{\sqrt{\pi}}n^{-3/2}\rho^n,\] where $\rho\approx 2.90511$ is the unique positive real root of the polynomial \[32z^7-32z^6-155z^5-20z^4-148z^3+60z^2-8z-4\] and $\gamma\approx 1.04240$ is a root of the polynomial \begin{align*}
&\hphantom{-}\,\,17348952064 z^{14}- 11927404544 z^{12}- 
 6678731520 z^{10} \\ 
 &- 886278144 z^8- 33824320 z^6- 516144 z^4+ 4048 z^2+11.
 \end{align*}
\end{theorem}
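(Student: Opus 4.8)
The plan is to turn the combinatorial description of Eeta wins supplied by \Cref{prop:Tamari_product} and \Cref{prop:Tamari_characterization} into an algebraic equation for $F(z)$, and then to read off the asymptotics by singularity analysis. Recall that the elements of $\Tam_n$ are indexed by Dyck paths of semilength $n$, and that such a path has a first-return (arch) decomposition $x=x^{(1)}\cdots x^{(k)}$ into prime (single-arch) pieces. By \Cref{prop:Tamari_product} the interval $[\hat 0,x]$ factors as the product $\prod_i [\hat 0,x^{(i)}]$; since an Ungar game on a product lattice is the \emph{selective compound} of the Ungar games on the factors (a move is a simultaneous Ungar move in every coordinate, at least one nontrivial), and the $P$-positions of a selective compound are exactly the tuples all of whose coordinates are $P$-positions (from an all-$P$ tuple every move creates an $N$-coordinate; from a tuple with an $N$-coordinate, one move resets all $N$-coordinates to $P$), the lattice $[\hat0,x]$ is an Eeta win iff each $[\hat0,x^{(i)}]$ is. Letting $F^{\mathrm{pr}}(z)$ enumerate the prime elements that are Eeta wins, this multiplicativity yields $1+F(z)=\bigl(1-F^{\mathrm{pr}}(z)\bigr)^{-1}$, i.e.\ $F^{\mathrm{pr}}=F/(1+F)$. \Cref{prop:Tamari_characterization} then tells us exactly which prime elements $UwD$ are Eeta wins in terms of the recursive structure of the Dyck path $w$, and feeding this back produces a further polynomial relation tying $F^{\mathrm{pr}}$ (hence $F$) to the Catalan series $\mathcal C(z)=\sum_{n\ge0}\operatorname{Cat}_nz^n$ (satisfying $\mathcal C=1+z\mathcal C^2$) and possibly to one auxiliary series recording whatever refined statistic the characterization needs; one may instead reach the same point through a single functional equation carrying a catalytic variable.

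Next I would eliminate all auxiliary unknowns from this finite polynomial system --- by resultants, or by the kernel method if a catalytic variable is present --- leaving a single equation $Q(F(z),z)=0$. One then checks that $Q$ is precisely the displayed quartic, that it is irreducible over $\mathbb Q(z)$ (so that $F$ has algebraic degree exactly $4$; it suffices to rule out a root in $\mathbb Q(z)$ and a factorization into two quadratics over $\mathbb Q[z]$), and that the unique formal power series solution with $F(0)=0$ has the expected initial coefficients $1,1,2,4,9,\dots$ (which one can cross-check against small Tamari lattices: $|\Eeta(\Tam_2)|=1$, $|\Eeta(\Tam_3)|=2$, etc.).

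For the asymptotics: $F$ is algebraic, hence analytic at $0$ with finitely many singularities, all of them branch points of the curve $Q(y,z)=0$, i.e.\ common zeros of $Q$ and $\partial_yQ$. I would compute $\operatorname{Res}_y\!\bigl(Q,\partial_yQ\bigr)$, identify its smallest positive real root as $z_c$, and recognize $\rho=1/z_c$ as the unique positive real root of $32z^7-32z^6-155z^5-20z^4-148z^3+60z^2-8z-4$ (the relevant reciprocal factor of that resultant). Since the coefficients of $F$ are nonnegative, Pringsheim's theorem makes $z_c$ a singularity; an aperiodicity check (the coefficient sequence has consecutive nonzero entries, so its support lies in no proper sublattice) shows $z_c$ is the unique dominant singularity. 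Because $\partial_yQ$ vanishes simply and $\partial_zQ$ does not at $(F(z_c),z_c)$, the Newton polygon forces a square-root branch point: $F(z)=\tau-c\,(1-z/z_c)^{1/2}+O(1-z/z_c)$, where $\tau=F(z_c)$ is the double root of $Q(\,\cdot\,,z_c)$ and $c>0$ is the resulting Puiseux coefficient, both algebraic over $\mathbb Q$. The transfer theorem of singularity analysis then gives $[z^n]F\sim \tfrac{c}{2\sqrt\pi}\,z_c^{-n}n^{-3/2}=\tfrac{\gamma}{\sqrt\pi}\,n^{-3/2}\rho^{n}$ with $\gamma=c/2$; eliminating $z_c$ and $\tau$ from the relations defining $c$ yields the stated degree-$14$ minimal polynomial for $\gamma$, and the numerical values $\rho\approx2.90511$, $\gamma\approx1.04240$ select the correct roots and produce the closed-form nested-radical expressions.

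The conceptual heart of the argument is the first step: faithfully encoding \Cref{prop:Tamari_characterization}, handling the degenerate arches (the one-element lattice $\Tam_1$, the empty $w$, and the edge cases of the Tamari covering structure) so that the recursion is exactly right, and pushing the elimination through to land on precisely the quartic $Q$. Everything after that is, in principle, routine singularity analysis, but verifying the square-root nature of the dominant singularity and extracting the explicit algebraic values of $\rho$ and especially $\gamma$ (whose minimal polynomial has degree $14$) is computation-heavy and best done with a computer algebra system; the main thing that could go wrong is a mismatch between the equation one derives and the stated $Q$, which would signal an error in the translation step.
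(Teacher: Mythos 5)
Your overall strategy matches the paper's: decompose elements of $\Tam_n$ into indecomposable components, use the product lemma to get $F^{\mathrm{pr}}=F/(1+F)$ for the series $F^{\mathrm{pr}}$ of indecomposable Eeta wins (the paper calls this $G$), derive a functional equation for $F^{\mathrm{pr}}$ from the recursive characterization, eliminate, and finish with standard algebraic-function singularity analysis. The back end of your argument (Pringsheim, the resultant/discriminant $z^2\widehat Q(z)$, the square-root branch point, the transfer theorem, $\gamma=c/2$) is correct and is if anything more detailed than the paper's sketch.

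However, there is a genuine gap exactly where you yourself locate the ``conceptual heart'': you never actually derive the functional equation that encodes \Cref{prop:Tamari_characterization}, and your guess about its shape is off. The relation does not involve the Catalan series $\mathcal C(z)$ and needs no catalytic variable. What the even-districted condition gives is a unique decomposition: for $n\ge 3$, every indecomposable Eeta win in $\Tam_n$ is $\bigl(u_1\oplus\cdots\oplus u_k\oplus((u_{k+1}\oplus\cdots\oplus u_r)\ominus 1)\bigr)\ominus 1$ with $k$ odd, $r\ge k$, and all $u_i$ indecomposable Eeta wins --- the outer $\ominus 1$ and the placement of the inner $\ominus 1$ account for the ``even number of components'' clause, and deleting the last two entries leaves $u_1\oplus\cdots\oplus u_r$, which must be an Eeta win, i.e.\ each $u_i$ must be one. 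Summing over the $\lceil r/2\rceil$ admissible odd values of $k$ for each $r$ yields the closed, self-contained equation
\[
G(z)=z+z^2\,\frac{G(z)+G(z)^2}{\bigl(1-G(z)^2\bigr)^2},
\]
and only after substituting $G=F/(1+F)$ does one land on the quartic $Q$. Without carrying out this bookkeeping (including the degenerate case $r=k$, where the inner block is the singleton permutation, and the base case $n=1$ contributing the lone term $z$), there is no polynomial system to eliminate, so the claim that the elimination produces ``precisely the quartic $Q$'' is unsupported. Everything downstream of this point in your proposal is fine once the equation is in hand.
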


Since $|\Tam_n|$ is the $n$-th Catalan number (which grows as $(4-o(1))^n$), the preceding theorem shows that $|\Eeta(\Tam_n)|/\Tam_n|$ is decays exponentially in $n$. 

\subsection{Outline} 
In \Cref{sec:basics}, we discuss some basic properties of lattices and Ungar moves. \Cref{sec:Weak} concerns the weak order on $S_n$; it is in this section that we establish \Cref{thm:Weak}. In \Cref{sec:young}, we prove the results from \Cref{subsec:Young} about Young's lattice. 
\Cref{sec:tamari} is devoted to analyzing Ungar games on principal order ideals of Tamari lattices; it is in this section that we prove \Cref{thm:Tamari}.  Finally, in \Cref{sec:open}, we mention potential directions for future research; we also give a short argument showing that Ungar games are $\mathsf{NC}^1$-hard.

\section{Basics}\label{sec:basics}

We assume familiarity with the theory of posets (partially ordered sets); a standard reference is \cite[Chapter~3]{Stanley}. As mentioned in \Cref{sec:intro}, we assume that every poset $P$ in this article is such that $\{y\in P:y\leq x\}$ is finite for every $x\in P$.   

Let $P$ be a poset. We tacitly view subsets of $P$ as subposets of $P$. If $u,v\in P$ are such that $u\leq v$, then the \dfn{interval} from $u$ to $v$ is the set $[u,v]=\{w\in P:u\leq w\leq v\}$. If $|[u,v]|=2$, then we say $v$ \dfn{covers} $u$. For $x\in P$, we write $\cov_P(x)$ for the set of elements of $P$ that $x$ covers. We write $\max(P)$ for the set of maximal elements of $P$. An \dfn{order ideal} of $P$ is a subset $I\subseteq P$ such that if $x,y\in P$ are such that $x\leq y$ and $y\in I$, then $x\in I$. An order ideal is \dfn{principal} if it is of the form $\{y\in P:y\leq x\}$ for some $x\in P$. Let $J(P)$ denote the set of finite order ideals of $P$, ordered by containment.  

A \dfn{meet-semilattice} is a poset $L$ such that any two elements $x,y\in L$ have a greatest lower bound, which is called their \dfn{meet} and denoted $x\wedge y$. Because the meet operation is commutative and associative, it makes sense to write $\bigwedge X$ for the meet of a nonempty finite set $X\subseteq L$. Our running assumption about posets (that principal order ideals are finite) guarantees that $L$ has a unique minimal element $\hat 0$. We say $L$ is a \dfn{lattice} if any two elements $x,y\in L$ also have a least upper bound, which is called their \dfn{join} and denoted $x\vee y$. If $L$ is a finite lattice, then it has a unique maximal element $\hat 1$. 

If $P$ is a poset, then $J(P)$ is a lattice whose meet and join operations are given by intersection and union, respectively. A finite lattice is \dfn{distributive} if it is isomorphic to $J(P)$ for some finite poset $P$. Ungar moves in distributive lattices have a simple description. For each $I\in J(P)$, we have $\cov_{J(P)}(I)=\{I\setminus\{x\}:x\in\max(I)\}$. Thus, applying an Ungar move to $I$ results in an order ideal $I\setminus T$ for some $T\subseteq\max(I)$. 

If $L$ is a meet-semilattice, then every element $x \in L$ is either an Eeta win or an Atniss win.  If $x$ is an Atniss win, then there is a nontrivial Ungar move that sends $x$ to an Eeta win.  This yields the following lemma, which we record for future reference.

\begin{lemma}\label{lem:toP2}
Let $L$ be a meet-semilattice. For every $x\in L$, the set $\MM(x)\cap\Eeta(L)$ is nonempty. 
\end{lemma}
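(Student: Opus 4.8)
The plan is a two-case analysis keyed on the status of the element $x$ itself, using the recursive description of $\Atniss(L)$ and $\Eeta(L)$ recalled just before the statement. The one observation that makes everything work is that $x$ always lies in $\MM(x)$: applying the trivial Ungar move (i.e., taking $T=\emptyset$) fixes $x$. So $\MM(x)$ is never merely the set of elements strictly reachable from $x$; it always contains $x$. Since the Ungar game played on the finite interval $[\hat 0, x]$ is a finite two-player game with no draws, $x$ is either an Eeta win in $L$ or an Atniss win in $L$, and these are the two cases to handle.

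In the case where $x\in\Eeta(L)$, the element $x$ itself already witnesses that $\MM(x)\cap\Eeta(L)\neq\emptyset$, so there is nothing more to do. In the case where $x\in\Atniss(L)$, I would invoke the recursive characterization: $x$ is an Atniss win precisely because there exists some $z\in\MM(x)\setminus\{x\}$ with $z\in\Eeta(L)$. This $z$ lies in $\MM(x)\cap\Eeta(L)$, so again the intersection is nonempty. As the two cases are exhaustive, the lemma follows.

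The only point worth a sentence of care is bookkeeping: the notions of Eeta win and Atniss win for an \emph{element} $x$ are defined through the interval $[\hat 0,x]$, whereas the set $\MM(x)$ appearing in the statement is formed in $L$. But for any $y\leq x$ one has $\cov_L(y)\subseteq[\hat 0,x]$ and the meet operation agrees on $[\hat 0,x]$, so $\MM_{[\hat 0,x]}(y)=\MM_L(y)$ and there is no ambiguity. I do not anticipate any genuine obstacle here; this lemma is essentially a restatement of the sentence immediately preceding it, isolated for later use.
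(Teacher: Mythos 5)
Your proof is correct and matches the paper's argument exactly: the paper derives this lemma from the same two-case observation (if $x$ is an Eeta win, the trivial Ungar move puts $x\in\MM(x)\cap\Eeta(L)$; if $x$ is an Atniss win, some nontrivial Ungar move reaches an Eeta win). Your extra remark that $\MM_{[\hat 0,x]}(y)=\MM_L(y)$ is a fine bit of bookkeeping that the paper leaves implicit.
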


We denote the Cartesian product of sets $X_1,\ldots,X_m$ by $X_1\times\cdots\times X_m$. If $L_1, \ldots, L_m$ are lattices, then there is a natural partial order on $L_1\times\cdots\times L_m$ in which $(x_1,\ldots,x_m)\leq(y_1,\ldots,y_m)$ if and only if $x_i\leq y_i$ for all $1\leq i\leq m$; this turns $L_1\times\cdots\times L_m$ into a lattice called the \dfn{product} of $L_1,\ldots,L_m$. The following simple lemma, which allows us to analyze the Ungar game on $L_1\times\cdots\times L_m$ in terms of the Ungar games on $L_1,\ldots,L_m$, will be very useful for us in the sequel.

\begin{lemma}\label{lem:product}
Let $L_1,\ldots,L_m$ be lattices. An element $(x_1,\ldots,x_m)$ is an Eeta win in the product $L_1\times\cdots\times L_m$ if and only if $x_i$ is an Eeta win in $L_i$ for every $i\in[m]$. That is, \[\Eeta(L_1\times\cdots\times L_m)=\Eeta(L_1)\times\cdots\times\Eeta(L_m).\]
\end{lemma}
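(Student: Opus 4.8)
The plan is to reduce the statement to a single structural identity about Ungar moves in products and then run an easy induction. Write $L=L_1\times\cdots\times L_m$ and $x=(x_1,\ldots,x_m)$. The key claim is
\[\MM(x)=\MM(x_1)\times\cdots\times\MM(x_m),\]
together with the refinement that the Ungar move on $L$ producing a given tuple is nontrivial exactly when at least one of the corresponding coordinate moves is nontrivial. To prove this, first observe that the covers of $x$ in $L$ are precisely the tuples obtained from $x$ by lowering a single coordinate $x_j$ to some $z\in\cov_{L_j}(x_j)$: if a purported cover differed from $x$ in two coordinates, reinstating one of the original coordinates would produce a strictly intermediate element, contradicting the covering relation. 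Since meets in $L$ are computed coordinatewise and every element of $\cov_{L_j}(x_j)$ lies below $x_j$, one can group any $T\subseteq\cov_L(x)$ according to which coordinate its members modify---say into sets $T_j\subseteq\cov_{L_j}(x_j)$---and check that the $j$th coordinate of $\bigwedge(\{x\}\cup T)$ is exactly $\bigwedge(\{x_j\}\cup T_j)$; conversely every choice of such coordinatewise moves is realized this way. This yields the displayed identity and the accompanying remark about nontriviality.

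Next I would note that whether $x$ is an Eeta win depends only on the interval $[\hat 0,x]$ (the game starting at $x$ never leaves this interval, and covers and meets of elements $\le x$ agree whether computed in $[\hat 0,x]$ or in $L$), so there is no loss in assuming each $L_i$, hence $L$, is finite; note also that $[\hat 0_L,x]$ is itself a product of intervals $[\hat 0_{L_i},x_i]$, so the inductive hypothesis will apply to smaller tuples in exactly the required form. The argument then proceeds by induction on $|[\hat 0,x]|$, using the recursive characterization of Eeta and Atniss wins: $\hat 0$ is an Eeta win; a non-bottom element $y$ is an Atniss win exactly when $\MM(y)\setminus\{y\}$ contains an Eeta win; and $y$ is an Eeta win exactly when every element of $\MM(y)\setminus\{y\}$ is an Atniss win. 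The base case is $x=\hat 0_L=(\hat 0_{L_1},\ldots,\hat 0_{L_m})$, where both sides of the claimed equivalence hold.

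For the inductive step I would prove the equivalent statement that $x$ is an Atniss win in $L$ if and only if some $x_i$ is an Atniss win in $L_i$. If $x_i$ is an Atniss win, pick an Eeta win $u_i\in\MM(x_i)\setminus\{x_i\}$ and, for each $j\neq i$, use \Cref{lem:toP2} to pick $u_j\in\MM(x_j)\cap\Eeta(L_j)$; then $u=(u_1,\ldots,u_m)\in\MM(x)\setminus\{x\}$ by the structural identity, and $u<x$, so the inductive hypothesis makes $u$ an Eeta win in $L$, whence $x$ is an Atniss win. Conversely, if every $x_i$ is an Eeta win and $u=(u_1,\ldots,u_m)\in\MM(x)\setminus\{x\}$ is arbitrary, then $u_j\in\MM(x_j)$ for all $j$ and $u_k\neq x_k$ for some $k$; since $x_k$ is an Eeta win, $u_k$ is an Atniss win in $L_k$, so the inductive hypothesis applied to $u<x$ makes $u$ an Atniss win in $L$. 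As $u$ was arbitrary, $x$ is an Eeta win. Passing to contrapositives and ranging over all tuples gives the set equality $\Eeta(L_1\times\cdots\times L_m)=\Eeta(L_1)\times\cdots\times\Eeta(L_m)$.

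The only real subtlety---and the step I would be most careful about---is the handling of the coordinates $j\neq i$ in the Atniss-win direction: one cannot simply leave $x_j$ fixed there, since nothing is known about whether $x_j$ is an Eeta win, which is exactly why \Cref{lem:toP2} is invoked to pull all of those coordinates down to Eeta wins simultaneously. Everything else is routine bookkeeping with the coordinatewise formulas for covers and meets established in the first step.
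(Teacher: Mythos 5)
Your proof is correct and follows essentially the same route as the paper's: both hinge on the identity $\MM(x_1,\ldots,x_m)=\MM(x_1)\times\cdots\times\MM(x_m)$ and then induct, showing the tuple is an Atniss win exactly when some coordinate is. The only cosmetic difference is in the Atniss direction: the paper leaves the Eeta-win coordinates fixed and moves every Atniss-win coordinate down, whereas you move one Atniss coordinate down and invoke \cref{lem:toP2} uniformly on the rest---both work equally well.
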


\begin{proof}
We proceed by induction on $L_1\times\cdots\times L_m$. Choose $(x_1,\ldots,x_m)\in L_1\times\cdots\times L_m$. The key observation is that $\MM(x_1, \ldots, x_m)=\MM(x_1) \times \cdots \times \MM(x_m)$.  Thus, applying a nontrivial Ungar move to $(x_1, \ldots, x_m)$ consists of applying Ungar moves to $x_1, \ldots, x_m$ individually, where at least one of these Ungar moves is nontrivial.

Suppose $(x_1, \ldots, x_m)$ is such that $x_i \in \Eeta(L_i)$ for all $i$. Applying any nontrivial Ungar move to $(x_1, \ldots, x_m)$ produces an element $(y_1, \ldots, y_m)$ such that $y_i \in \Atniss(L_i)$ for some $i$; by the induction hypothesis, $(y_1, \ldots, y_m)\in\Atniss(L_1\times\cdots\times L_m)$. This shows that $(x_1, \ldots, x_m)\in\Eeta(L_1\times\cdots\times L_m)$. 

To prove the reverse direction, suppose $(x_1, \ldots, x_m)$ is such that $x_i\in\Atniss(L_i)$ for some $i$. Let $K \subseteq [m]$ be the (necessarily nonempty) set of indices $i$ such that $x_i \in \Atniss(L_i)$.  For each $i \in K$, there is some $y_i \in (\MM(x_i) \setminus \{x_i\}) \cap \Eeta(L_i)$.  For $i \notin K$, set $y_i=x_i$.  Then $(y_1, \ldots, y_m) \in \MM(x_1, \ldots, x_m) \setminus \{(x_1, \ldots, x_m)\}$ is an Eeta win by induction, so $(x_1, \ldots, x_m)\in\Atniss(L_1\times\cdots\times L_m)$.
\end{proof}

\section{The Weak Order}\label{sec:Weak}
Consider the symmetric group $S_n$, whose elements are the permutations of $[n]$. An \dfn{inversion} of a permutation $w\in S_n$ is a pair $(i,j)$ such that $1\leq i<j\leq n$ and $w^{-1}(i)>w^{-1}(j)$. The (right) weak order is the partial order on $S_n$ in which $u\leq v$ if and only if every inversion of $u$ is also an inversion of $v$. It is well known that the weak order on $S_n$ is a lattice. We will henceforth simply write $S_n$ for this lattice. 

The Ungar moves on $S_n$ are precisely those described in \Cref{subsec:UngarMoves}: each such move reverses some disjoint consecutive decreasing subsequences of a permutation. 

Given a word $x$ of length $k$ whose entries are distinct positive integers, we define the \dfn{standardization} of $x$ to be the permutation in $S_k$ obtained by replacing the $i$-th smallest entry in $x$ with $i$ for all $i\in[n]$. For example, the standardization of $36582$ is $24351$. Given $v\in S_k$, we say a permutation $w\in S_n$ \dfn{consecutively contains} $v$ if there exists an index $i\in[n-k+1]$ such that the standardization of $w(i)w(i+1)\cdots w(i+k-1)$ is $v$. For example, $w$ consecutively contains $1324$ if and only if there exists $i\in[n-3]$ such that $w(i)<w(i+2)<w(i+1)<w(i+3)$. We say $w$ \dfn{consecutively avoids} $v$ if $w$ does not consecutively contain $v$. 

The following lemma will allow us to prove \Cref{thm:Weak}, which tells us that as $n\to\infty$, asymptotically almost all permutations in $S_n$ are Atniss wins. Rather than demonstrate explicit winning strategies for Atniss, we will employ a strategy-stealing argument.

\begin{lemma}\label{lem:pattern_avoidance}
Let $B=\{1324,14325,154326,1654327,\ldots\}$ be the set of permutations of the form $1(m-1)(m-2)\cdots 2\,m$ for $m\geq 4$. If $w\in S_n$ is a permutation that consecutively contains one of the permutations in $B$, then $w$ is an Atniss win in $S_n$. 
\end{lemma}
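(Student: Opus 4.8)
The plan is a strategy-stealing argument: instead of exhibiting a winning strategy for Atniss, I would assume $w$ is an Eeta win and derive a contradiction. First I would unpack the hypothesis. Saying that $w$ consecutively contains $1(m-1)(m-2)\cdots 2\,m$ at the window of positions $i,i+1,\dots,i+m-1$ means exactly the following: writing $R$ for the factor of $w$ occupying positions $i+1,\dots,i+m-2$, the factor $R$ is a consecutive decreasing run of length $m-2\ge 2$, the entry $p:=w(i)$ is smaller than every entry of $R$, and the entry $q:=w(i+m-1)$ is larger than every entry of $R$. (The ``$1$'' and the ``$m$'' in the pattern are precisely what force $p<\min R$ and $q>\max R$, and the positions $i$ and $i+m-1$ exist because the pattern has length $m>m-2$.) Recalling that the Ungar moves on $S_n$ are exactly the reversals of families of pairwise-disjoint consecutive decreasing subsequences, reversing $R$ is a nontrivial Ungar move; let $w'$ denote its output. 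The key structural observation is that the entries of $w'$ in positions $i,i+1,\dots,i+m-1$ are strictly increasing: reversing $R$ replaces the decreasing run by the increasing run from $\min R$ to $\max R$, and these values are sandwiched between $p$ on the left and $q$ on the right.

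Now suppose $w$ is an Eeta win. Then every element of $\MM(w)\setminus\{w\}$ is an Atniss win; in particular $w'$ is an Atniss win, so there is a nontrivial Ungar move from $w'$ to some Eeta win $v$. I would then show that this move ``lifts'' to $w$. The move $w'\to v$ reverses some pairwise-disjoint consecutive decreasing subsequences of $w'$; since the entries of $w'$ in positions $i,\dots,i+m-1$ are strictly increasing, no consecutive decreasing subsequence of $w'$ of length $\ge 2$ can meet the positions $i+1,\dots,i+m-2$ occupied by $R$ (a run containing an interior position of this window would have to be a singleton, and a run through position $i$ or $i+m-1$ cannot extend into the window). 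Hence each block reversed by $w'\to v$ lies off the positions of $R$; since $w$ and $w'$ agree off those positions, each such block is also a consecutive decreasing subsequence of $w$, disjoint from $R$. Therefore the family consisting of $R$ together with these blocks is a family of pairwise-disjoint consecutive decreasing subsequences of $w$, so reversing all of them simultaneously is a single nontrivial Ungar move on $w$; because reversals of disjoint blocks commute, its output is exactly $v$. Thus $v\in\MM(w)\setminus\{w\}$ is an Eeta win, which makes $w$ an Atniss win---contradicting our assumption. So $w$ is an Atniss win.

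The main obstacle, and the part I would write out most carefully, is the lifting step: checking that the strict monotonicity of $w'$ on the window (equivalently, the hypotheses $p<\min R$ and $q>\max R$ coming from the ``$1$'' and ``$m$'' of the pattern) genuinely prevents any length-$\ge 2$ decreasing run of $w'$ from touching the interior of $R$, and then verifying that superimposing the reversal of $R$ on the move $w'\to v$ is a legitimate single Ungar move of $w$ with output $v$. Everything else---identifying the pattern condition with the ``run plus bounding entries'' description, and the closing win/loss bookkeeping---is routine given the characterization of Ungar moves on $S_n$ recalled at the start of the section.
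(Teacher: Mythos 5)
Your proposal is correct and uses the same key idea as the paper's proof: reverse the decreasing run $R=w(i+1)\cdots w(i+m-2)$ as a ``throwaway'' move, observe that the entries $w(i)$ and $w(i+m-1)$ (the ``$1$'' and ``$m$'' of the pattern) force the decreasing runs of the resulting permutation to avoid the interior of the window, and conclude that moves from $w'$ lift to single moves from $w$. The paper packages this slightly more efficiently by noting that $\MM(w')$ consists exactly of the outputs of Ungar moves on $w$ that reverse $R$ and then invoking \cref{lem:toP2} (which covers the case where $w'$ is itself an Eeta win without a separate contradiction argument), but the substance is identical.
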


\begin{proof}
Suppose $m\geq 4$ and $i\in[n-m+1]$ are such that $w(i)w(i+1)\cdots w(i+m-1)$ has standardization $1(m-1)(m-2)\cdots 2\,m$. Let $v$ be the permutation obtained from $w$ by reversing the consecutive decreasing subsequence $w(i+1)w(i+2)\cdots w(i+m-2)$. The maximal consecutive decreasing subsequences of $v$ are exactly the same as the maximal consecutive decreasing subsequences of $w$ other than $w(i+1)w(i+2)\cdots w(i+m-2)$. Therefore, $\MM(v)$ is equal to the set of permutations that can be obtained by applying an Ungar move to $w$ that involves reversing the subsequence $w(i+1)w(i+2)\cdots w(i+m-2)$. We know by \Cref{lem:toP2} that there exists an Eeta win in $\MM(v)$. This Eeta win is in $\MM(w)\setminus\{w\}$, so $w$ is an Atniss win. 
\end{proof}

Another way of phrasing the above proof of \Cref{lem:pattern_avoidance} is that Atniss can reverse the run $w(i+1)w(i+2)\cdots w(i+m-2)$ as a ``throwaway'' move and then choose whether or not to play further. 

 \begin{proof}[Proof of \Cref{thm:Weak}]
It follows from \Cref{lem:pattern_avoidance} that every Eeta win in $S_n$ consecutively avoids $1324$. It is known (see \cite{OEISA113228}) that the number of permutations in $S_n$ that consecutively avoid $1324$ is $O(0.95586^nn!)$.  
 \end{proof}

\section{Intervals in Young's Lattice} \label{sec:young}

\subsection{Intervals in Distributive Lattices}
Before we specialize our attention to Young's lattice, let us prove \Cref{thm:deep_poset}, which is much more general in scope because it deals with arbitrary finite distributive lattices. We begin with a simple but useful lemma that is analogous to \Cref{lem:pattern_avoidance}. 

\begin{lemma}\label{lem:deep_Atniss}
Let $P$ be a poset. Suppose $\lambda\in J(P)$ and $x\in \max(\lambda)$ are such that \[\max(\lambda\setminus\{x\})=\max(\lambda)\setminus\{x\}.\] Then $\lambda\in\Atniss(J(P))$. 
\end{lemma}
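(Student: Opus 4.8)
The plan is to exhibit a single ``throwaway'' Ungar move for Atniss that neither wins nor loses outright, and then invoke \Cref{lem:toP2} exactly as in the proof of \Cref{lem:pattern_avoidance}. Concretely, starting from $\lambda$ in the Ungar game on $J(P)$, Atniss should play the maximal Ungar move that removes only the single maximal element $x$, i.e.\ pass from $\lambda$ to $\lambda\setminus\{x\}$. Since $x\in\max(\lambda)$, this is a legal nontrivial Ungar move (in a distributive lattice $J(P)$, the covers of $\lambda$ are exactly $\lambda\setminus\{y\}$ for $y\in\max(\lambda)$, and an Ungar move removes any subset of $\max(\lambda)$; here we take the one-element subset $\{x\}$).

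The key point is that the hypothesis $\max(\lambda\setminus\{x\})=\max(\lambda)\setminus\{x\}$ guarantees that removing $x$ does not ``expose'' any new maximal elements. I would spell this out as follows: the set of Ungar moves available from $\lambda$ that remove $x$ is in bijection with $2^{\max(\lambda)\setminus\{x\}}$, whereas the set of all Ungar moves available from $\lambda\setminus\{x\}$ is in bijection with $2^{\max(\lambda\setminus\{x\})}=2^{\max(\lambda)\setminus\{x\}}$; moreover these two families of moves produce exactly the same resulting order ideals (in both cases one deletes $\{x\}\cup T$ for $T\subseteq\max(\lambda)\setminus\{x\}$). Hence $\MM(\lambda\setminus\{x\})$ equals the set of order ideals obtainable from $\lambda$ by an Ungar move that removes $x$. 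In particular $\MM(\lambda\setminus\{x\})\subseteq\MM(\lambda)\setminus\{\lambda\}$.

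Now apply \Cref{lem:toP2} to the element $\lambda\setminus\{x\}$ of the meet-semilattice $J(P)$: there exists an Eeta win in $\MM(\lambda\setminus\{x\})$. By the previous paragraph this Eeta win lies in $\MM(\lambda)\setminus\{\lambda\}$, so Atniss (playing first from $\lambda$) can move to an Eeta win. Therefore $\lambda\in\Atniss(J(P))$, completing the proof.

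The argument is essentially routine once the right move is identified; the only thing requiring care is the bookkeeping that shows $\MM(\lambda\setminus\{x\})$ coincides with the set of one-move images of $\lambda$ that delete $x$ — this is precisely where the non-exposure hypothesis $\max(\lambda\setminus\{x\})=\max(\lambda)\setminus\{x\}$ is used, and it is the analogue of the observation about maximal decreasing runs in the proof of \Cref{lem:pattern_avoidance}. I do not anticipate any real obstacle.
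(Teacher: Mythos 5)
Your proposal is correct and follows essentially the same route as the paper: apply \Cref{lem:toP2} to $\lambda\setminus\{x\}$ and use the hypothesis $\max(\lambda\setminus\{x\})=\max(\lambda)\setminus\{x\}$ to see that every element of $\MM(\lambda\setminus\{x\})$ has the form $\lambda\setminus(\{x\}\cup T)$ with $\{x\}\cup T\subseteq\max(\lambda)$, hence lies in $\MM(\lambda)\setminus\{\lambda\}$. The bookkeeping you spell out is exactly the paper's one-line observation, so there is nothing to add.
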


\begin{proof}
By \Cref{lem:toP2}, there exists $\nu\in\MM(\lambda\setminus\{x\})\cap\Eeta(J(P))$. Then $\nu=\lambda\setminus(\{x\}\cup T)$ for some $T\subseteq\max(\lambda\setminus\{x\})=\max(\lambda)\setminus\{x\}$. Since $(\{x\}\cup T)\subseteq \max(\lambda)$, we have $\nu\in\MM(\lambda)\setminus\{\lambda\}$. Because $\nu\in\Eeta(J(P))$, this proves that $\lambda\in\Atniss(J(P))$. 
\end{proof}

\begin{proof}[Proof of \Cref{thm:deep_poset}]
Let $P$ be a poset, and suppose $\delta,\lambda,\mu\in J(P)$ are such that every non-maximal element of $\delta$ is less than at least $2$ maximal elements of $\delta$ and $\mu\subseteq(\delta\setminus\max(\delta))\subseteq\delta\subseteq\lambda$. For $I\in J(P)$, let $\widetilde I=I\setminus\mu$. We will prove by induction on $|\lambda|$ that $\lambda\in\Atniss(J(P))$ if and only if $\widetilde\lambda\in\Atniss(J(\widetilde P))$. 

First, suppose there exists $x\in\max(\delta)\cap\max(\lambda)$. It follows from our hypotheses on $\delta$ and $\lambda$ that $\max(\lambda\setminus\{x\})=\max(\lambda)\setminus\{x\}$, so \Cref{lem:deep_Atniss} guarantees that $\lambda\in\Atniss(J(P))$. On the other hand, the hypothesis that $\mu\subseteq(\delta\setminus\max(\delta))$ implies that $x\in\max(\widetilde\lambda)$ and $\max(\widetilde\lambda\setminus\{x\})=\max(\widetilde\lambda)\setminus\{x\}$. Appealing to \Cref{lem:deep_Atniss} again, we find that $\widetilde\lambda\in\Atniss(J(\widetilde P))$ as well. 

Next, suppose we have $\max(\delta)\cap\max(\lambda)=\emptyset$. Because $\max(\lambda)=\max(\widetilde\lambda)$, we know that ${\MM(\widetilde\lambda)=\{\widetilde\nu:\nu\in\MM(\lambda)\}}$. For each $\nu\in\MM(\lambda)\setminus\{\lambda\}$, we have $\delta\subseteq\nu$, so we know by induction that $\nu\in\Atniss(J(P))$ if and only if $\widetilde\nu\in\Atniss(J(\widetilde P))$. This implies that there exists an element of $\Eeta(J(P))$ in $\MM(\lambda)\setminus\{\lambda\}$ if and only if there exists an element of $\Eeta(J(\widetilde P))$ in $\MM(\widetilde \lambda)\setminus\{\widetilde\lambda\}$. In other words, $\lambda\in\Atniss(J(P))$ if and only if $\widetilde\lambda\in\Atniss(J(\widetilde P))$. 
\end{proof}

\subsection{Atniss and Eeta Wins in Young's Lattice}

Note that every non-maximal element of the staircase partition $\delta_{n+1}$ is less than at least $2$ maximal elements of $\delta_{n+1}$. Moreover, we have $\delta_{n+1}\setminus\max(\delta_{n+1})=\delta_n$. Appealing to \Cref{thm:deep_poset}, we find that in order to prove \Cref{thm:Young}, it suffices to prove it when $\mu=\emptyset$. 

We will find it helpful to define a {\color{blue}\reflectbox{L}}\dfn{-path}\footnote{The symbol \reflectbox{L} is pronounced ``le'' (or ``lle'').} to be a lattice path of the form $\E^a\N^b$ for some positive integers $a$ and $b$. Given parities $\alpha$ and $\beta$, we say that such a lattice path is \dfn{$(\alpha$, $\beta)$} if $a$ is $\alpha$ and $b$ is $\beta$. For example, the \reflectbox{L}-path $\E^2\N^5$ is (even,\,odd). 
The maximal elements of a partition $\lambda$ are called the \dfn{corners} of $\lambda$. If $\lambda$ has $k$ corners, then $\pat(\lambda)$ can be written uniquely in the form $\reflectbox{\text{L}}_1\cdots\reflectbox{\text{L}}_k$, where $\reflectbox{\text{L}}_1,\ldots,\reflectbox{\text{L}}_k$ are \reflectbox{L}-paths; we call these the \dfn{maximal} {\color{blue}\reflectbox{L}}\dfn{-paths} of $\lambda$. 

\begin{proof}[Proof of \Cref{thm:Young}]
As mentioned above, we may assume $\mu=\emptyset$. Then $n=0$. Let $\lambda$ be a partition, and let $\reflectbox{\text{L}}_1,\ldots,\reflectbox{\text{L}}_k$ be the maximal \reflectbox{L}-paths of $\lambda$ so that $\pat(\lambda)=\reflectbox{\text{L}}_1\cdots\reflectbox{\text{L}}_k$. Let $c_1,\ldots,c_k$ be the corners of $\lambda$, listed from southwest to northeast (so $c_i$ corresponds naturally to $\reflectbox{\text{L}}_i$). Our goal is to show that $\lambda$ is an Eeta win in Young's lattice if and only if none of its maximal \reflectbox{L}-paths are (odd,\,odd). If $\lambda=\emptyset$, then this is vacuously true because $\lambda$ has no maximal \reflectbox{L}-paths. Thus, we may assume $\lambda$ is nonempty and proceed by induction on Young's lattice. Observe that $\lambda$ is an Atniss win in Young's lattice if and only if the transpose of $\lambda$ is an Atniss win in Young's lattice. 

First, suppose none of the maximal \reflectbox{L}-paths of $\lambda$ are (odd,\,odd). Consider $\nu\in\MM(\lambda)\setminus\{\lambda\}$. Then $\nu=\lambda\setminus T$, where $T\subseteq\{c_1,\ldots,c_k\}$ is nonempty. Let us write $T=\{c_{i_1},\ldots,c_{i_m}\}$, where $i_1<\cdots<i_m$. We may assume without loss of generality that at least one of $\reflectbox{\text{L}}_{i_1},\ldots,\reflectbox{\text{L}}_{i_m}$ is (even,\,odd) or (even,\,even); if not, then simply replace $\lambda$ and $\nu$ by their transposes. Let $j$ be the smallest index such that $c_{i_j}$ is (even,\,odd) or (even,\,even). Say $\reflectbox{L}_{i_j}=\E^a\N^b$. When we delete the corners in $T$ to obtain $\nu$, $\reflectbox{L}_{i_j}$ transforms into $\E^{a-1}\N\E\N^{b-1}$. If $i_j=1$ or $c_{i_j-1}\not\in T$, then it is straightforward to see that $\E^{a-1}\N$ is an (odd,\,odd) maximal \reflectbox{L}-path of $\nu$. If instead $i_j>1$ and $c_{i_j-1}\in T$ (so $i_j-1=i_{j-1}$), then $\reflectbox{\text{L}}_{i_j-1}$ is (odd,\,even), so it contains at least $2$ north steps. This implies that $\E^{a-1}\N$ is an (odd,\,odd) maximal \reflectbox{L}-path of $\nu$ in this case as well. In either case, we have shown that $\nu$ has an (odd,\,odd) maximal \reflectbox{L}-path, so we can use our induction hypothesis to see that $\nu$ in an Atniss win in Young's lattice. As $\nu$ was an arbitrary element of $\MM(\lambda)\setminus\{\lambda\}$, this proves that $\lambda$ is an Eeta win in Young's lattice. 

To prove the converse, suppose $\lambda$ has at least one (odd,\,odd) maximal \reflectbox{L}-path.  
We consider a few cases. 

\medskip 

\noindent {\bf Case 1.} Suppose $\reflectbox{\text{L}}_k$ has at least $2$ north steps. Let $\lambda^{\#}$
 be the partition obtained by removing the first two rows from $\lambda$. Then $\lambda^{\#}$ has at least one (odd,\,odd) maximal \reflectbox{L}-path, so it is an Atniss win in Young's lattice by induction. This means that there is a nonempty set $T^{\#}$ of corners of $\lambda^{\#}$ such that $\lambda^{\#}\setminus T^{\#}$ is an Eeta win. By induction, $\lambda^{\#}\setminus T^{\#}$ has no (odd,\,odd) maximal \reflectbox{L}-paths. The set $T^{\#}$ corresponds naturally to a set $T$ of corners of $\lambda$, and $\lambda\setminus T$ is the partition obtained by adding the first two rows of $\lambda$ to the top of $\lambda^{\#}\setminus T^{\#}$. Then $\lambda\setminus T$ has no (odd,\,odd) maximal \reflectbox{L}-paths, so it is an Eeta win by induction. Since $(\lambda\setminus T)\in\MM(\lambda)\setminus\{\lambda\}$, this shows that $\lambda$ is an Atniss win. 

 \medskip 

 \noindent {\bf Case 2.} Suppose $\reflectbox{\text{L}}_k=\E\N$. In this case, $\max(\lambda\setminus\{c_k\})=\max(\lambda)\setminus\{c_k\}$. Setting $P=\mathbb{N}^2$ in \Cref{lem:deep_Atniss}, we find that $\lambda$ is an Atniss win.

\medskip 

\noindent {\bf Case 3.} Suppose $\reflectbox{\text{L}}_k=\E^a\N$ for some $a\geq 2$.  Let $\lambda^{\#}$ be the partition obtained by removing the first row from $\lambda$. By \Cref{lem:toP2}, there is a (possibly empty) set $T^{\#}$ of corners of $\lambda^{\#}$ such that $\lambda^{\#}\setminus T^{\#}$ is an Eeta win. By induction, $\lambda^{\#}\setminus T^{\#}$ has no (odd,\,odd) maximal \reflectbox{L}-paths. The set $T^{\#}$ corresponds naturally to a set $T$ of corners of $\lambda$, and $\lambda\setminus T$ is the partition obtained by adding the first row of $\lambda$ to the top of $\lambda^{\#}\setminus T^{\#}$. Then $\lambda\setminus T$ has no (odd,\,odd) maximal \reflectbox{L}-paths except for possibly the northeastmost \reflectbox{L}-path (call this $\widetilde{\reflectbox{\text{L}}}$).  Notice that $\widetilde{\reflectbox{\text{L}}} \in \{\E^a \N, \E^{a+1} \N\}$.  If $\widetilde{\reflectbox{\text{L}}}$ is (odd,\,odd), then $\lambda\setminus (\{c_k\}\cup T)$ has no (odd,\,odd) maximal \reflectbox{L}-paths and hence is an Eeta win by induction.  Since $(\lambda\setminus(\{c_k\}\cup T))\in\MM(\lambda)\setminus\{\lambda\}$, this shows that $\lambda$ is an Atniss win if $\widetilde{\reflectbox{\text{L}}}$ is (odd,\,odd).  Now suppose $\widetilde{\reflectbox{\text{L}}}$ is instead (even,\,odd).  Notice that $T$ is nonempty since, if it were empty, then $\lambda \setminus T=\lambda$ would have no (odd,\,odd) maximal \reflectbox{L}-paths, contrary to our standing assumption.  So $T$ is nonempty, and, since $\lambda\setminus T$ has no (odd,\,odd) maximal \reflectbox{L}-paths, again we are done by induction.
\end{proof}

\subsection{Rectangles}
Let us now prove \Cref{thm:rectangle_enumeration}, which enumerates Eeta wins in $J(\rho_{a\times b})$, where $\rho_{a\times b}$ is the $a\times b$ rectangle poset. 

\begin{proof}[Proof of \Cref{thm:rectangle_enumeration}]
For fixed $a,b\geq 0$, we can append extra north steps to the beginning and extra east steps to the end of the path associated to an order ideal in $J(\rho_{a\times b})$ so that the resulting path uses a total of $a$ north steps and $b$ east steps. Then such a path
can be written uniquely in the form $\N^s\reflectbox{\text{L}}_1\cdots\reflectbox{\text{L}}_k\E^t$, where $s,t\geq 0$ and $\reflectbox{\text{L}}_1,\ldots,\reflectbox{\text{L}}_k$ are \reflectbox{L}-paths that use a total of $a-s$ north steps and $b-t$ east steps. It follows from \Cref{thm:Young} that such an order ideal is an Eeta win in $J(\rho_{a\times b})$ if and only if none of $\reflectbox{\text{L}}_1,\ldots,\reflectbox{\text{L}}_k$ are (odd,\,odd). We will consider generating functions that count \reflectbox{L}-paths, with the variable $x$ keeping track of the number of east steps and the variable $y$ keeping track of the number of north steps. The generating function for (odd,\,odd) \reflectbox{L}-paths is \[(x+x^3+x^5+\cdots)(y+y^3+y^5+\cdots)=\frac{xy}{(1-x^2)(1-y^2)},\] so the generating function for \reflectbox{L}-paths that are not (odd,\,odd) is \[(x+x^2+x^3+\cdots)(y+y^2+y^3+\cdots)-\frac{xy}{(1-x^2)(1-y^2)}=\frac{xy}{(1-x)(1-y)}-\frac{xy}{(1-x^2)(1-y^2)}.\] The generating function that counts sequences of \reflectbox{L}-paths that are not (odd,\,odd) is then \[\frac{1}{1-\left(\frac{xy}{(1-x)(1-y)}-\frac{xy}{(1-x^2)(1-y^2)}\right)}.\] Hence, 
\begin{align*}
\sum_{a\geq 0}\sum_{b\geq 0}|\Eeta(J(\rho_{a\times b}))|x^by^a&=\sum_{s\geq 0}y^s\sum_{t\geq 0}x^t\cdot\frac{1}{1-\left(\frac{xy}{(1-x)(1-y)}-\frac{xy}{(1-x^2)(1-y^2)}\right)} \\ &=\frac{1}{(1-x)(1-y)}\cdot\frac{1}{1-\left(\frac{xy}{(1-x)(1-y)}-\frac{xy}{(1-x^2)(1-y^2)}\right)} \\ 
&=\frac{(1+x)(1+y)}{1-(1+x)y^2-(1+y)x^2}. \qedhere
\end{align*}
\end{proof}

\subsection{Type-$A$ Root Posets}
The root poset $\Phi^+(A_{n})$---which is isomorphic to the skew shape $\rho_{n\times n}\setminus\delta_{n-1}$---is an important poset in algebraic combinatorics with several interesting properties. For example, the number of order ideals of $\Phi^+(A_{n})$ is the Catalan number $C_{n+1}=\frac{1}{n+2}\binom{2(n+1)}{n+1}$. In this subsection, we prove \Cref{thm:type-A-root}, which enumerates Eeta wins in $J(\Phi^+(A_{n}))$. 

As before, we view an order ideal in $J(\rho_{n\times n}\setminus\delta_{n-1})$ as a skew shape $\lambda\setminus\delta_{n-1}$ such that $\lambda\subseteq\rho_{n\times n}$, and we consider the associated lattice path $\pat(\lambda)$. Suppose $\pat(\lambda)$ uses $s$ north steps and $t$ east steps. Then $s,t\in\{n-1,n\}$. Let $\pat'(\lambda)=\N^{n-s}\pat(\lambda)\,\E^{n-t}$. If we delete from $\pat'(\lambda)$ all steps that lie on the boundary of $\delta_{n-1}$ or on the $x$-axis or $y$-axis, then we will break $\pat'(\lambda)$ into lattice paths $\eta^{(1)},\ldots,\eta^{(r)}$ that represent order ideals of smaller type-$A$ root posets. That is, for each $1\leq i\leq r$, there is a positive integer $n_i$ such that $\eta^{(i)}=\pat(\lambda^{(i)})$ for some partition $\lambda^{(i)}$ satisfying $\delta_{n_i-1}\subseteq\lambda^{(i)}\subseteq\rho_{n_i\times n_i}$. In fact, this construction is designed so that $\lambda^{(i)}$ contains the slightly larger staircase $\delta_{n_i}$. Setting $\mu=\delta_{n_i-1}$ in \Cref{thm:Young}, we find that the interval $[\delta_{n_i-1},\lambda^{(i)}]$ is an Eeta win if and only if $\eta^{(i)}$ does not contain an odd-length block of east steps immediately followed by an odd-length block of north steps.   
It is straightforward to see that \[J(\lambda\setminus\delta_{n-1})\cong [\delta_{n_1-1},\lambda^{(1)}]\times\cdots\times [\delta_{n_r-1},\lambda^{(r)}],\] so it follows from \Cref{lem:product} that $\lambda\setminus\delta_{n-1}$ is an Eeta win in $J(\rho_{n\times n}\setminus\delta_{n-1})$ if and only if none of $\eta^{(1)},\ldots,\eta^{(r)}$ contains an odd-length block of east steps immediately followed by an odd-length block of north steps. 

\begin{example}\label{exam:typeA1}
Let $n=12$, and let $\lambda=(11,11,11,10,10,6,6,4,3,3,3,1)$. Then \[\pat'(\lambda)={\color{DarkGreen}\E\N\E\E\N\N}{\color{red}\N\E\N\E}{\color{SkyBlue}\E\N}{\color{red}\N\E}{\color{MyPurple}\E\E\E\N\N\E\N\N}{\color{red}\N\E}\] is drawn in \Cref{fig:lattice_path_decomposition}. The steps lying on the boundary of $\delta_{10}$ or the $x$-axis or $y$-axis are colored red. If we delete those steps, then we are left with the lattice paths \[\eta^{(1)}={\color{DarkGreen}\E\N\E\E\N\N},\quad\eta^{(2)}={\color{SkyBlue}\E\N},\quad\eta^{(3)}={\color{MyPurple}\E\E\E\N\N\E\N\N}.\] Then $n_1=3$, $n_2=1$, and $n_3=4$. The corresponding partitions are \[\lambda^{(1)}={\color{DarkGreen}(3,3,1)},\quad\lambda^{(2)}={\color{SkyBlue}(1)},\quad\lambda^{(3)}={\color{MyPurple}(4,4,3,3)}.\] For each $1\leq i\leq 3$, the skew shape $\lambda^{(i)}\setminus\delta_{n_i-1}$ is an order ideal of $\rho_{n_i\times n_i}\setminus\delta_{n_i-1}$. Notice that each $\lambda^{(i)}$ actually contains the staircase $\delta_{n_i}$. Since the intervals $[\delta_{2},\lambda^{(1)}]$ and $[\delta_0,\lambda^{(2)}]$ are Atniss wins, the lattice $J(\lambda\setminus\delta_{10})$ is also an Atniss win. 
\end{example}

\begin{figure}[ht]
\begin{center}\hspace{-2.7cm}\includegraphics[height=4.395cm]{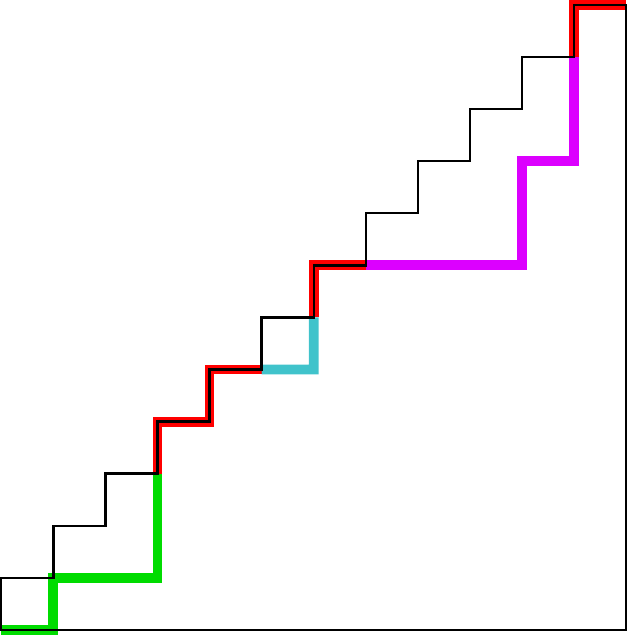}\qquad\qquad\qquad\qquad\includegraphics[height=4.395cm]{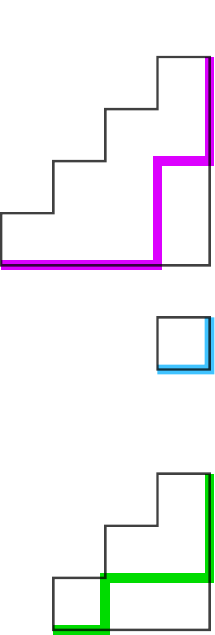}
  \end{center}
\caption{Deleting the ({\color{red}red}) steps that lie on the boundary of $\delta_{10}$ or the $x$-axis or $y$-axis breaks a lattice path into $3$ smaller lattice paths.}\label{fig:lattice_path_decomposition}
\end{figure}

In our enumeration of Eeta wins in $J(\rho_{n\times n}\setminus\delta_{n-1})$, it will be convenient to use the language of Dyck paths. A \dfn{Dyck path} of semilength $n$ is a path in $\mathbb R^2$ consisting of up (i.e., $(1,1)$) steps and down (i.e., $(1,-1)$) steps that starts at $(0,0)$, ends at $(2n,0)$, and never passes below the $x$-axis. We  can represent a Dyck path as a word over the alphabet $\{\U,\D\}$, where $\U$ stands for an up step and $\D$ stands for a down step. 

An \dfn{ascending run} (respectively, \dfn{descending run}) of a Dyck path is a maximal consecutive string of up (respectively, down) steps. Say a run is \dfn{odd} (respectively, \dfn{even}) if it has an odd (respectively, even) number of steps. Say a run is \dfn{\oddd} if it is odd and does not touch the $x$-axis or it is even and does touch the $x$-axis. Say a run is \dfn{$\odddd$} if it is odd and does not contain the first or last step of the Dyck path or it is even and contains the first or last step of the Dyck path. 

\begin{example}
Consider the Dyck path \[{\color{SkyBlue}\U}\vert\overline{{\color{SkyBlue}\D}}\vert\underline{{\color{MyPurple}\U\U}}\vert\overline{\underline{{\color{SkyBlue}\D}}}\vert\overline{\underline{{\color{SkyBlue}\U}}}\vert\underline{{\color{MyPurple}\D\D}}\vert\underline{{\color{MyPurple}\U\U\U\U}}\vert\overline{\underline{{\color{MyPurple}\D\D\D\D}}}=\begin{array}{l}\includegraphics[height=1.1cm]{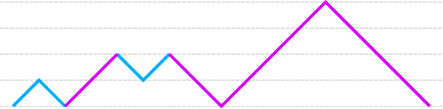}\end{array}.\] 
Odd runs are in {\color{SkyBlue}light blue}, while even runs are in {\color{MyPurple}lavender}. In the word representation of this Dyck path, we have separated the runs by bars for clarity, and we have underlined the $\oddd$ runs and overlined the $\odddd$ runs.  
\end{example} 

Given adjectives $\alpha$ and $\beta$ that describe runs, let us say a Dyck path is \dfn{$(\alpha,\beta)$-avoiding} if it does not contain an $\alpha$ ascending run immediately followed by a $\beta$ descending run. For example, a Dyck path is (odd,\,$\odddd$)-avoiding if it does not contain an odd ascending run immediately followed by an $\odddd$ descending run. 

Given an order ideal $\lambda\setminus\delta_{n-1}$ of $\rho_{n\times n}\setminus\delta_{n-1}$, let $\pat^*(\lambda)$ be the word obtained from $\pat'(\lambda)$ by replacing each $\E$ with $\U$ and replacing each $\N$ with $\D$. Then $\U\,\pat^*(\lambda)\,\D$ is a Dyck path of semilength $n+1$. For example, if $\lambda$ is the partition from \Cref{exam:typeA1}, then $\U\,\pat^*(\lambda)\,\D$ is the Dyck path \[\U{\color{DarkGreen}\U\D\U\U\D\D}{\color{red}\D\U\D\U}{\color{SkyBlue}\U\D}{\color{red}\D\U}{\color{MyPurple}\U\U\U\D\D\U\D\D}{\color{red}\D\U}\D.\]
It follows from the above discussion that $\lambda\setminus\delta_{n-1}$ is an Eeta win in $J(\rho_{n\times n}\setminus\delta_{n-1})$ if and only if $\U\,\pat^*(\lambda)\,\D$ is ($\oddd$,\,$\oddd$)-avoiding. This allows us to prove \Cref{thm:type-A-root}. 

\begin{proof}[Proof of \Cref{thm:type-A-root}]

Let $\mathcal F_n$ be the set of (odd,\,odd)-avoiding Dyck paths of semilength $n$. Let $\underline{\mathcal F}_n$ and $\overline{\mathcal F}_n$ be the set of ($\oddd$,$\oddd$)-avoiding Dyck paths of semilength $n$ and the set of ($\odddd$,\,$\odddd$)-avoiding Dyck paths of semilength $n$, respectively. Let \[F(z)=\sum_{n\geq 0}|\mathcal F_n|z^n,\quad \underline{F}(z)=\sum_{n\geq 0}|\underline{\mathcal{F}}_n|z^n,\quad \overline{F}(z)=\sum_{n\geq 0}|\overline{\mathcal{F}}_n|z^n.\] Let $\mathcal G_n$ and $\mathcal H_n$ be the set of (odd,\,$\odddd$)-avoiding Dyck paths of semilength $n$ and the set of ($\odddd$,\,odd)-avoiding Dyck paths of semilength $n$, respectively. Let \[G(z)=\sum_{n\geq 0}|\mathcal G_n|z^n\quad\text{and}\quad H(z)=\sum_{n\geq 0}|\mathcal H_n|z^n.\] 

If $\Lambda$ is a nonempty Dyck path, then there are unique Dyck paths $\Lambda'$ and $\Lambda''$ such that ${\Lambda=\U\Lambda'\D\Lambda''}$. For example, if $\Lambda=\U\U\U\D\D\U\D\D\U\D$, then $\Lambda'=\U\U\D\D\U\D$ and $\Lambda''=\U\D$. We call $\Lambda'$ and $\Lambda''$ the \dfn{primary part} of $\Lambda$ and the \dfn{secondary part} of $\Lambda$, respectively. 
A nonempty Dyck path is ($\oddd$,\,$\oddd$)-avoiding if and only if its primary part is (odd,\,odd)-avoiding and its secondary part is ($\oddd$,\,$\oddd$)-avoiding. Therefore, 
\begin{equation}\label{eq:Dyck1}
\underline F(z)-1=zF(z)\underline F(z).
\end{equation} 
A nonempty Dyck path is (odd,\,odd)-avoiding if and only if its primary part is nonempty and ($\odddd$,\,$\odddd$)-avoiding and its secondary part is (odd,\,odd)-avoiding. Therefore, 
\begin{equation}\label{eq:Dyck2}
F(z)-1=z(\overline F(z)-1)F(z).
\end{equation}
A nonempty Dyck path $\Lambda$ is ($\odddd$,\,$\odddd$)-avoiding if and only if one of the following holds: 
\begin{itemize}
\item The primary part $\Lambda'$ is (odd,\,odd)-avoiding, and the secondary part $\Lambda''$ is empty. 
\item The primary part $\Lambda'$ is (odd,\,$\odddd$)-avoiding, and the secondary part $\Lambda''$ is nonempty and (odd,\,$\odddd$)-avoiding. 
\end{itemize}  Therefore, 
\begin{equation}\label{eq:Dyck3}
\overline F(z)-1=zF(z)+zG(z)(G(z)-1).
\end{equation} 
A nonempty Dyck path $\Lambda$ is (odd,\,$\odddd$)-avoiding if and only if one of the following holds: 
\begin{itemize}
\item The primary part $\Lambda'$ is ($\odddd$,\,odd)-avoiding, and the secondary part $\Lambda''$ is empty. 
\item The primary part $\Lambda'$ is nonempty and ($\odddd$,\,$\odddd$)-avoiding, and the secondary part $\Lambda''$ is nonempty and (odd,\,$\odddd$)-avoiding. 
\end{itemize} 
Therefore, 
\begin{equation}\label{eq:Dyck4}
G(z)-1=zH(z)+z(\overline F(z)-1)(G(z)-1).
\end{equation}
There is a simple bijection $\mathcal G_n\to\mathcal H_n$ that acts by simply reversing a Dyck path and swapping $\U$'s and $\D$'s (i.e., reflecting the path through the line $x=n$), so 
\begin{equation}\label{eq:Dyck5}
G(z)=H(z).
\end{equation} 

\Cref{eq:Dyck1,eq:Dyck2,eq:Dyck3,eq:Dyck4,eq:Dyck5} form a system in the unknowns $F(z)$, $\underline F(z)$, $\overline F(z)$, $G(z), H(z)$. We can solve this system using a computer algebra program to find that \[\underline F(z)=1+z+\frac{-1-2z+\sqrt{z^2-4z+2-2\sqrt{1-4z+4z^2-4z^3}}}{2}.\] 
For $n\geq 1$, the poset $\Phi^+(A_{n})$ is isomorphic to $\rho_{n\times n}\setminus\delta_{n-1}$. As discussed above, there is a bijection from $\Eeta(J(\rho_{n\times n}\setminus\delta_{n-1}))$ to $\underline{\mathcal F}_{n+1}$ given by ${\lambda\setminus\delta_{n-1}\mapsto \U\,\pat^*(\lambda)\,\D}$. Hence, \[\sum_{n\geq 1}|\Eeta(J(\Phi^+(A_{n})))|z^n=\frac{1}{z}(-1-z+\underline F(z))=\frac{-1-2z+\sqrt{z^2-4z+2-2\sqrt{1-4z+4z^2-4z^3}}}{2z},\] as desired. 

The method used to derive the asymptotics in the statement of the theorem is routine and is discussed in \cite[Chapter~VII]{Flajolet}; we will just sketch the details. The constant $\rho$ is determined by noting that $1/\rho$ is the complex singularity of $\frac{1}{z}(-1-z+\underline F(z))$ closest to the origin (Pringsheim's theorem guarantees that $\rho$ is positive and real). One can use a computer algebra software such as Maple to expand $\frac{1}{z}(-1-z+\underline F(z))$ as a Puiseux series centered at $1/\rho$; the result is $\beta_0+{\beta_1(z-1/\rho)^{1/2}}+o((z-1/\rho)^{1/2})$ for some explicitly computable algebraic numbers $\beta_0$ and $\beta_1$. Following the discussion in \cite[Chapter~VII]{Flajolet}, this expansion transfers into an asymptotic formula of the form \[|\Eeta(J(\Phi^+(A_n)))|\sim \frac{\gamma}{\sqrt{\pi}}n^{-3/2}\rho^{n+1},\] and one can use a computer algebra software to find that $\gamma$ is as stated in the theorem.
\end{proof}

\section{Tamari Lattices}\label{sec:tamari}

A permutation $w\in S_n$ is called \dfn{$312$-avoiding} if there do not exist indices $i_1<i_2<i_3$ such that $w(i_2)<w(i_3)<w(i_1)$. The set of $312$-avoiding permutations in $S_n$ forms a sublattice of the weak order that we denote by $\Tam_n$; this is one of the many combinatorial realizations of the $n$-th \dfn{Tamari lattice}. Our goal in this section is to prove \Cref{thm:Tamari}, which enumerates Eeta wins in Tamari lattices both exactly and asymptotically. Our first order of business is to describe Ungar moves in Tamari lattices. 

Suppose $w\in S_n$. If there exist indices $i$ and $i'$ such that $i+1 < i'$ and $w(i+1)<w(i')<w(i)$, then we can perform an \dfn{allowable swap} by swapping the entries $w(i)$ and $w(i+1)$. Let $\pi_\downarrow(w)$ be the permutation obtained from $w$ by repeatedly performing allowable swaps until no more allowable swaps can be performed. The element $\pi_\downarrow(w)$ is well defined (i.e., does not depend on the sequence of allowable swaps) and is $312$-avoiding \cite{ReadingCambrian}. Hence, we obtain a map $\pi_\downarrow\colon S_n\to\Tam_n$. Note that $\pi_\downarrow(w)=w$ if and only if $w\in\Tam_n$. 

The first author showed \cite[Equation~(1)]{DefantMeeting} that applying a maximal Ungar move within $\Tam_n$ to a $312$-avoiding permutation $w$ is equivalent to applying a maximal Ungar move to $w$ within the weak order on $S_n$ and then applying $\pi_\downarrow$. The exact same argument (which we omit) shows that applying an arbitrary nontrivial Ungar move to $w$ within $\Tam_n$ is equivalent to applying an arbitrary nontrivial Ungar move to $w$ within $S_n$ and then applying $\pi_\downarrow$. In what follows, we give an equivalent description of Tamari lattice Ungar moves that will be more suitable for our purposes. 

The \dfn{plot} of a permutation $w\in S_n$ is the diagram showing the points $(i,w(i))$ for all $i\in[n]$. We often identify permutations with their plots. Suppose $u\in S_m$ and $v\in S_n$. The \dfn{direct sum} $u\oplus v$ and the \dfn{skew sum} $u\ominus v$ are the permutations in $S_{m+n}$ defined by \[(u\oplus v)(i)=\begin{cases} u(i)& \mbox{ if } 1\leq i\leq m; \\
m+v(i-m) & \mbox{ if }m+1\leq i\leq m+n \end{cases}\] and \[(u\ominus v)(i)=\begin{cases} n+u(i)& \mbox{ if } 1\leq i\leq m; \\
v(i-m) & \mbox{ if }m+1\leq i\leq m+n. \end{cases}\]
The plot of $u\oplus v$ (respectively, $u\ominus v$) is obtained by placing the plot of $v$ to the northeast (respectively, southeast) of the plot of $u$. If $U$ and $V$ are sets of permutations, then we let \[U\oplus V=\{u\oplus v:u\in U,~v\in V\}\quad\text{and}\quad U\ominus V=\{u\ominus v:u\in U,~v\in V\}.\] 

A permutation is called \dfn{decomposable} if it can be written as the direct sum of two smaller permutations; otherwise, it is \dfn{indecomposable}. Every permutation $w$ can be written uniquely in the form $u_1\oplus\cdots\oplus u_k$ for some indecomposable permutations $u_1,\ldots,u_k$; these indecomposable permutations are called the \dfn{components} of $w$. Note that a permutation is $312$-avoiding if and only if all of its components are $312$-avoiding. Moreover, a $312$-avoiding permutation in $S_n$ is indecomposable if and only if its last entry is $1$. 

Suppose $w=u_1\oplus\cdots\oplus u_k\in\Tam_n$, where $u_1,\ldots,u_k$ are the components of $w$. Applying an Ungar move to $w$ is equivalent to applying Ungar moves to $u_1,\ldots,u_k$ independently and then taking the direct sum of the resulting permutations. In symbols, \[\MM(w)=\MM(u_1)\oplus\cdots\oplus\MM(u_k).\] This shows that in order to describe Ungar moves, we can restrict our attention to indecomposable $312$-avoiding permutations. 

Suppose $w\in\Tam_n$ is indecomposable, and assume $n\geq 2$. We can write $w=w'\ominus 1$ for some $w'\in \Tam_{n-1}$. Let $v_1,\ldots,v_k$ be the components of $w'$ so that $w=(v_1\oplus\cdots\oplus v_k)\ominus 1$. Suppose $v_k\in\Tam_m$. To apply an Ungar move to $w$, we apply an Ungar move to $w'$ and then either keep the entry $1$ in the last position or slide the $1$ into position $n-m$. In symbols, we have 
\[\MM(w)=\left(\left(\MM(v_1)\oplus\cdots\oplus\MM(v_k)\right)\ominus\{1\}\right)\sqcup\left(\left(\left(\MM(v_1)\oplus\cdots\oplus\MM(v_{k-1})\right)\ominus\{1\}\right)\oplus\MM(v_k)\right).\]

\begin{example}
Suppose \[w=32568741=(21\oplus 23541)\ominus 1=\begin{array}{l}
\includegraphics[height=2.2cm]{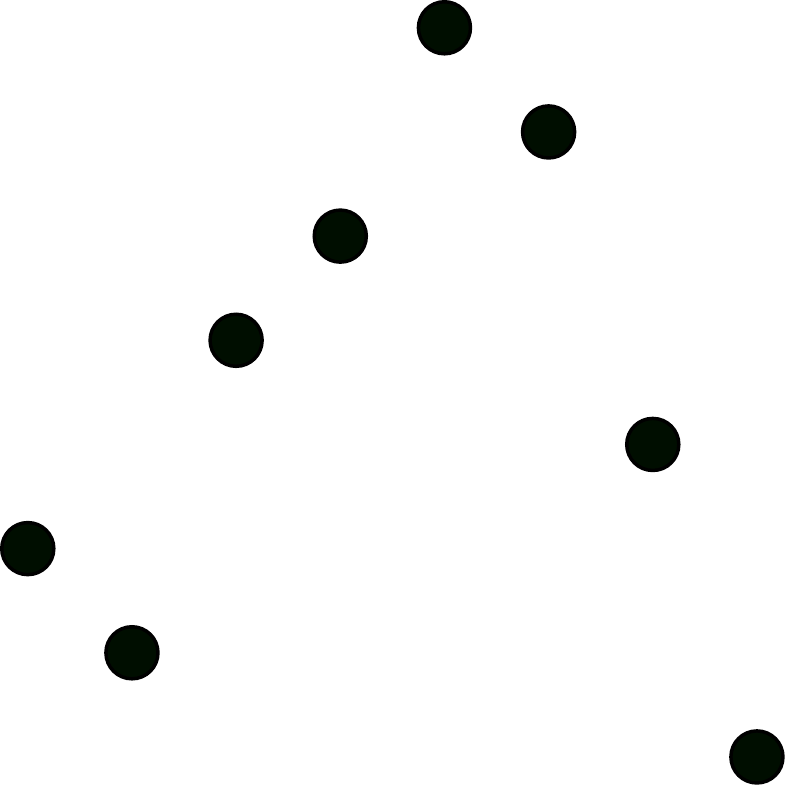}\end{array}\in\Tam_8.\] The $8$ indecomposable elements of $\MM(w)$ are 
\[\begin{array}{l}
\includegraphics[height=6.217cm]{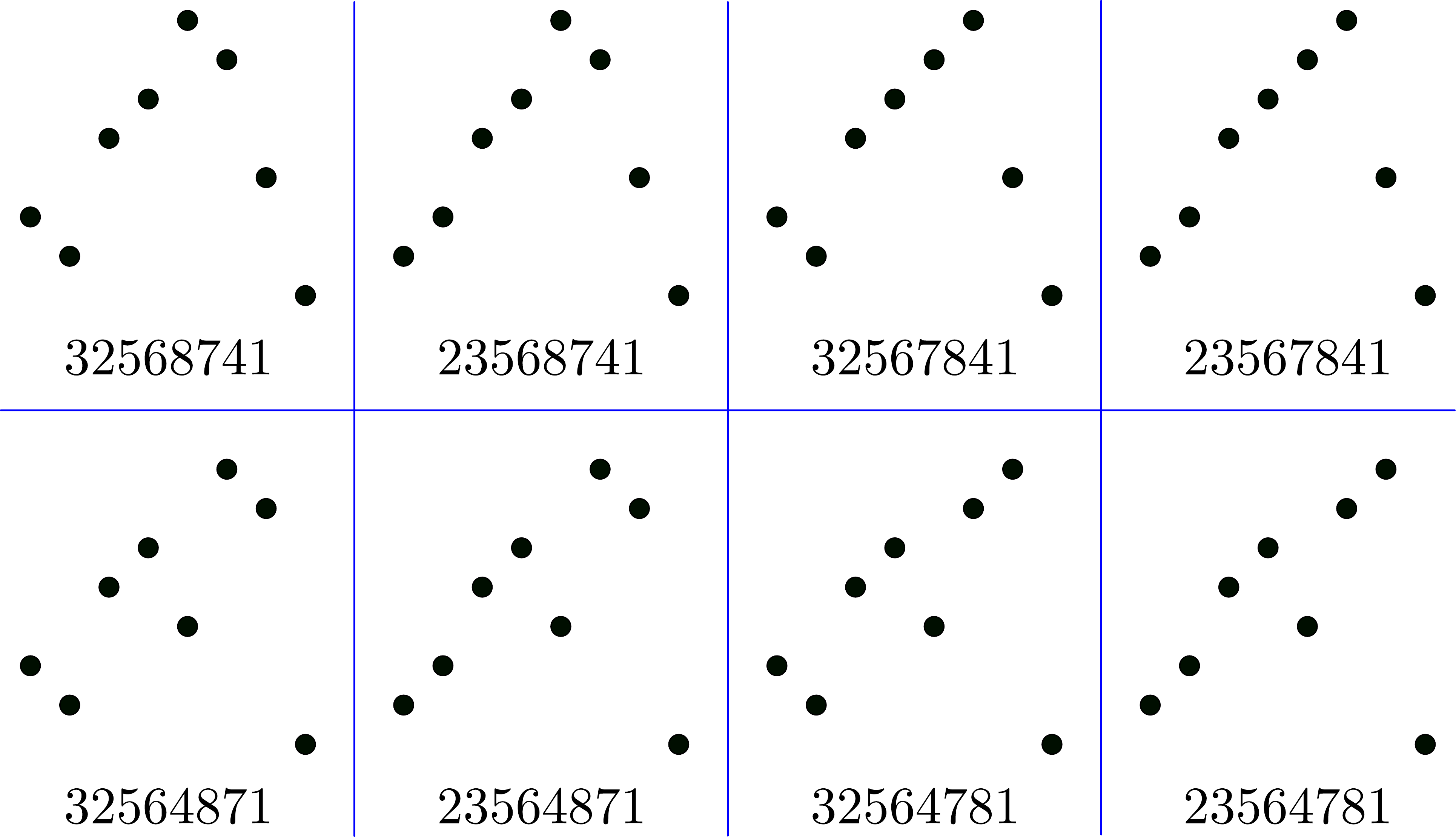}\end{array},\] while the $8$ decomposable elements of $\MM(w)$ are 
\[\begin{array}{l}
\includegraphics[height=6.217cm]{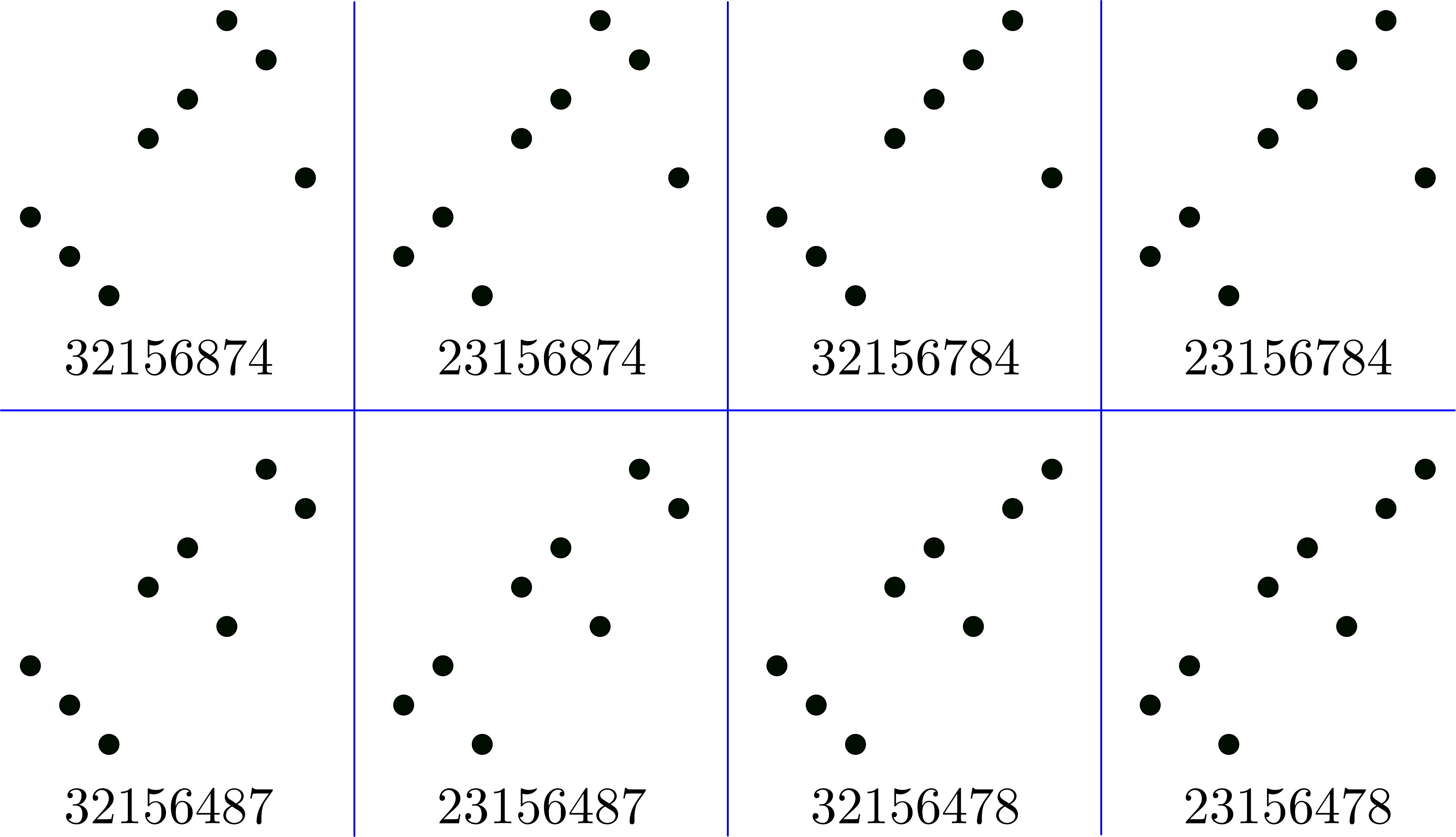}\end{array}.\] 
\end{example}

Recall from \Cref{sec:Weak} the definition of the \emph{standardization} of a word. Consider a permutation ${w=w(1)\cdots w(n)\in\Tam_n}$. Let us say $w$ is \dfn{even-districted} if one of the following conditions holds: 
\begin{itemize}
\item $n=1$; 
\item $n\geq 3$, $w=w'\ominus 1$ for some $w'\in\Tam_{n-1}$ with an even number of components, and the standardization of $w(1)\cdots w(n-2)$ is an Eeta win in $\Tam_{n-2}$.  
\end{itemize}

The following two propositions provide a recursive description of Eeta wins in Tamari lattices. 

\begin{proposition}\label{prop:Tamari_product}
Let $w=u_1\oplus\cdots\oplus u_k\in\Tam_n$, where $u_1,\ldots,u_k$ are the components of $w$. Let $n_i$ be the size of $u_i$. Then $w\in\Eeta(\Tam_n)$ if and only if $u_i\in\Eeta(\Tam_{n_i})$ for all $1\leq i\leq k$. 
\end{proposition}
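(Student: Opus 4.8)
The plan is to reduce \Cref{prop:Tamari_product} to \Cref{lem:product} by exhibiting a lattice isomorphism
\[
[\hat 0, w] \;\cong\; [\hat 0, u_1] \times \cdots \times [\hat 0, u_k],
\]
where $[\hat 0, u_i]$ denotes the principal order ideal of $u_i$ in $\Tam_{n_i}$. Once such an isomorphism is available, the proposition is immediate: a lattice isomorphism preserves covering relations and meets, hence carries Ungar moves to Ungar moves, and therefore preserves the Eeta/Atniss status of every element. Under the isomorphism the top element $w$ of $[\hat 0, w]$ corresponds to the top element $(u_1, \ldots, u_k)$ of the product, so $w \in \Eeta(\Tam_n)$ if and only if $(u_1, \ldots, u_k)$ is an Eeta win in $[\hat 0, u_1] \times \cdots \times [\hat 0, u_k]$, which by \Cref{lem:product} holds if and only if each $u_i$ is an Eeta win in $[\hat 0, u_i]$, i.e.\ if and only if $u_i \in \Eeta(\Tam_{n_i})$ for every $i$.

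To construct the isomorphism I would use the inversion-set description of the weak order. Write the positions $[n]$ as consecutive blocks $B_1, \ldots, B_k$ with $|B_i| = n_i$, so that $w = u_1 \oplus \cdots \oplus u_k$ maps $B_i$ onto the $i$-th value-block $\{n_1 + \cdots + n_{i-1} + 1, \ldots, n_1 + \cdots + n_i\}$. If $v \le w$ in $\Tam_n$, then every inversion of $v$ is an inversion of $w$; since $w$ has no inversion with one entry in an earlier value-block and one in a later value-block, neither does $v$, so $v$ also maps each $B_i$ onto the $i$-th value-block. Thus $v = v_1 \oplus \cdots \oplus v_k$, where $v_i \in S_{n_i}$ is the standardization of the restriction of $v$ to $B_i$; each $v_i$ is $312$-avoiding (a direct sum avoids $312$ if and only if both summands do), and the inversions of $v_i$ are exactly the inversions of $v$ lying inside $B_i$, so $v_i \le u_i$. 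This gives a map $v \mapsto (v_1, \ldots, v_k)$ from $[\hat 0, w]$ to $[\hat 0, u_1] \times \cdots \times [\hat 0, u_k]$, with inverse $(v_1, \ldots, v_k) \mapsto v_1 \oplus \cdots \oplus v_k$, which lands in $[\hat 0, w]$ because direct sums of $312$-avoiders are $312$-avoiders and the inversion set of $v_1 \oplus \cdots \oplus v_k$ is the disjoint union of the block-shifted inversion sets of the $v_i$, hence contained in that of $w$. Comparing inversion sets block by block shows $v \le v'$ if and only if $v_i \le v_i'$ for all $i$, so this bijection is a poset isomorphism, hence a lattice isomorphism.

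I do not expect a serious obstacle; the only step requiring care is the interval decomposition above, and that is routine given the inversion-set characterization of the weak order. In fact this decomposition is just the ``global'' form of the identity $\MM(w) = \MM(u_1) \oplus \cdots \oplus \MM(u_k)$ already recorded in the text. Consequently one could instead prove the proposition directly by an induction mirroring the proof of \Cref{lem:product}: the case $k = 1$ is vacuous, and for $k \ge 2$ one inducts on the pair $(n, |[\hat 0, w]|)$ ordered lexicographically, using that every $v \in \MM(w) \setminus \{w\}$ has the form $v_1 \oplus \cdots \oplus v_k$ with $v_i \in \MM(u_i)$ and applying the inductive hypothesis to $v$ and to each $v_i$. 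Either way, all the content is encapsulated by the direct-sum structure of Tamari-lattice Ungar moves together with \Cref{lem:product}.
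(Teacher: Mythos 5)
Your proposal is correct and follows the same route as the paper: the paper's proof simply asserts the lattice isomorphism $[\hat 0,w]\cong[\hat 0,u_1]\times\cdots\times[\hat 0,u_k]$ and invokes \cref{lem:product}. You supply the inversion-set verification of that isomorphism, which the paper omits as routine.
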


\begin{proof}
The interval $[\hat 0,w]$ in $\Tam_n$ is isomorphic to the product $[\hat 0,u_1]\times\cdots\times[\hat 0,u_k]$ (abusing notation, we use $\hat 0$ to denote the bottom elements of different lattices). Therefore, the desired result follows from \Cref{lem:product}. 
\end{proof}

\begin{proposition}\label{prop:Tamari_characterization}
An indecomposable permutation is an Eeta win in $\Tam_n$ if and only if it is even-districted. 
\end{proposition}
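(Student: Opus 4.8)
The plan is to prove \Cref{prop:Tamari_characterization} by induction on $n$, analyzing the game tree of an indecomposable $312$-avoiding permutation $w$ via the explicit formula for $\MM(w)$ derived just before the statement. Write $w = w' \ominus 1$ with $w' \in \Tam_{n-1}$, let $v_1, \ldots, v_k$ be the components of $w'$, and let $v_k \in \Tam_m$. The key structural fact to exploit is that $\MM(w)$ splits into two families: the \emph{indecomposable} outcomes of the form $\left(\MM(v_1) \oplus \cdots \oplus \MM(v_k)\right) \ominus \{1\}$, and the \emph{decomposable} outcomes of the form $\left(\left(\MM(v_1) \oplus \cdots \oplus \MM(v_{k-1})\right) \ominus \{1\}\right) \oplus \MM(v_k)$. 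By \Cref{prop:Tamari_product}, an element of the first family is an Eeta win iff the corresponding element of $\MM(v_1) \oplus \cdots \oplus \MM(v_k)$ (which is exactly $\MM(v_1 \oplus \cdots \oplus v_k) = \MM(w')$) is an Eeta win; and an element of the second family is an Eeta win iff both its indecomposable left part and the component from $\MM(v_k)$ are Eeta wins.

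The argument then reduces to a parity bookkeeping over two coupled games. First I would handle the base case $n=1$ ($w$ is the trivial permutation, which is an Eeta win, and is even-districted by definition) and the small case $n=2$ ($w = 21$, indecomposable, with $\MM(21) = \{21, 12\}$, so $21$ is an Atniss win; and $21$ is not even-districted since $n=2$ is excluded from both clauses). For $n \geq 3$, I would introduce the auxiliary quantity: the game on $w$ where the entry $1$ is \emph{frozen} in the last position, which is exactly the game on $w'$ (there is a move-preserving bijection since $\MM(v_1) \oplus \cdots \oplus \MM(v_k) \ominus \{1\}$ carries the product structure of $\MM(w')$). The crucial observation is that from $w$, Atniss's legal moves are: (a) play an Ungar move keeping $1$ frozen — these are exactly the moves available in the game on $w'$; or (b) simultaneously apply an Ungar move to $v_1 \oplus \cdots \oplus v_{k-1}$ (keeping their role's $1$ in its slot) and slide the $1$ past $v_k$, leaving the decomposable position $(\ast) \oplus (\text{something in }\MM(v_k))$. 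I would argue, using \Cref{prop:Tamari_product} and \Cref{lem:toP2}, that the existence of an Eeta win among the type-(b) outcomes is controlled precisely by whether $v_k$ is an Eeta win in $\Tam_m$ together with whether the ``left part with $1$ frozen'' admits a reachable Eeta win, and that the latter left-part game is itself the game on $v_1 \oplus \cdots \oplus v_{k-1}$, i.e., $w'$ with its last component removed — this is where the standardization of $w(1) \cdots w(n-2)$ enters.

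The heart of the proof is then a parity analysis: when all components $v_1, \ldots, v_k$ are such that $v_1 \oplus \cdots \oplus v_{k-1}$ and $v_k$ behave in the relevant Eeta/Atniss way, the minimum number of nontrivial moves to reach an all-Atniss frontier is governed by the number $k$ of components and by whether the truncated permutation $w(1) \cdots w(n-2)$ is an Eeta win. I would show that $w$ is an Eeta win iff (i) sliding the $1$ out never hands Eeta a winning position (forcing $v_k \in \Atniss$ or the left truncation to be an Atniss win), and (ii) the frozen-$1$ game on $w'$ has all its reachable successors being Atniss wins, which by \Cref{prop:Tamari_product} forces each $v_i$ to be an Eeta win — but then if $k$ is odd, Atniss can toggle a single component's move and pass parity to Eeta, making $w$ an Atniss win, whereas if $k$ is even the parities cancel. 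Combining, $w$ is an Eeta win exactly when $k$ is even, every $v_i$ is an Eeta win (equivalently $w'$ has an even number of Eeta-win components, but \Cref{prop:Tamari_product} makes "$w' \in \Eeta$" equivalent to "all $v_i \in \Eeta$", so the phrasing in the definition is "$w'$ has an even number of components" combined with the parity mechanism), and the standardization of $w(1)\cdots w(n-2)$ is an Eeta win in $\Tam_{n-2}$ — which is exactly the definition of even-districted.

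The main obstacle I anticipate is making the "parity of $k$ / parity of the number of pending single moves" argument airtight: one must carefully verify that when every $v_i$ is an Eeta win, the composite game on $w'$ (frozen $1$) behaves as a Nim-like sum where a player with an odd number of "free toggles" available can force the opponent into the losing role, and simultaneously track how the type-(b) slide-out move interacts with this parity. I expect this to require a careful case split on the parity of $k$ and on whether $v_k$ itself is an Eeta win, together with invoking the induction hypothesis both for $v_k$ (size $m < n$), for $v_1 \oplus \cdots \oplus v_{k-1}$, and for the standardization of $w(1) \cdots w(n-2)$ in $\Tam_{n-2}$. Everything else — the reduction via $\MM(w)$, the application of \Cref{prop:Tamari_product} and \Cref{lem:toP2} — is routine once the parity lemma is set up correctly.
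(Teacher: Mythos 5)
Your setup is right---induction on $n$, the split of $\MM(w)$ into indecomposable outcomes $(\MM(v_1)\oplus\cdots\oplus\MM(v_k))\ominus\{1\}$ and decomposable ``slide'' outcomes, the base cases, and the reliance on \Cref{prop:Tamari_product} and \Cref{lem:toP2} for the routine reductions. But the entire difficulty of the proof is deferred to a ``parity lemma'' that you never state precisely, and the heuristic you offer for it (``a Nim-like sum where a player with an odd number of free toggles can force the opponent into the losing role'') does not describe the actual mechanism. The product game here is not a disjunctive sum: by \Cref{lem:product} a player moves in an arbitrary nonempty subset of the components simultaneously, and the product is an Eeta win iff \emph{every} factor is---there is no Grundy-value or parity additivity to appeal to. The parity of $k$ enters only through its interaction with the slide-the-$1$ move, and making that interaction precise is exactly the content of the paper's two auxiliary lemmas: one showing that if $z$ is even-districted then the first component of any decomposable element of $\MM(z)$ fails to be even-districted (needed so that every decomposable successor of an even-districted $w$ is an Atniss win), and one showing that from any Eeta win $y\in\MM(x)$ with an even number of components one can manufacture $\widetilde y\in\MM(x)$ with $\widetilde y\ominus 1$ even-districted (needed to exhibit Atniss's winning move when $w$ is not even-districted). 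Each of these requires its own multi-case analysis---the second involves a delicate construction with standardizations that exploits $312$-avoidance---and neither appears in your outline, so the induction does not close.

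There is also a concrete conflation in your statement of the target. The even-districted condition requires the standardization of $w(1)\cdots w(n-2)$ to be an Eeta win; since $v_k=\widehat v\ominus 1$ is indecomposable with last entry minimal, this truncation replaces $v_k$ by $\widehat v$, and ``$\widehat v$ is an Eeta win'' is genuinely different from ``$v_k$ is an Eeta win'' (e.g.\ $v_k=21$ is an Atniss win while $\widehat v=1$ is an Eeta win). Your sketch slides between ``every $v_i$ is an Eeta win,'' ``$v_k\in\Atniss$,'' and the correct condition, so even the endpoint of the intended parity argument is not pinned down. As written, the proposal identifies the right scaffolding but leaves the load-bearing steps unproved and partly misstated.
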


Our proof of \Cref{prop:Tamari_characterization} will require the following lemmas. We refer the reader to \Cref{exam:Tamari1} for an illustration of the proof of \Cref{lem:ribute}.

\begin{lemma}\label{lem:ribute}
Let $x\in\Tam_n$, and suppose there exists a permutation $y\in\MM(x)\cap\Eeta(\Tam_n)$ with an even number of components. Then there exists $\widetilde y\in\MM(x)$ such that $\widetilde y\ominus 1$ is even-districted. 
\end{lemma}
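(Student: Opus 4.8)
The plan is to explicitly produce the witness $\widetilde y$ by modifying $y$ in a controlled way. We are given $x \in \Tam_n$ and an Eeta win $y \in \MM(x)$ with an even number of components, say $y = z_1 \oplus \cdots \oplus z_{2\ell}$. By \Cref{prop:Tamari_product}, each component $z_j$ is an Eeta win in its own Tamari lattice. The goal is to find $\widetilde y \in \MM(x)$ so that $\widetilde y \ominus 1$ is even-districted; unwinding the definition, this requires $\widetilde y$ to have an even number of components and the standardization of the first $|\widetilde y| - 1$ entries of $\widetilde y$ to be an Eeta win in $\Tam_{|\widetilde y|-1}$. The key point is that $\widetilde y$ need not equal $y$: we have freedom to choose any element of $\MM(x)$, and in particular we may re-apply Ungar moves to $x$ differently than $y$ did.

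First I would analyze the structure of $\MM(x)$ using the recursive descriptions established just before the lemma. If $x$ is decomposable, write $x = u_1 \oplus \cdots \oplus u_k$; then $\MM(x) = \MM(u_1)\oplus\cdots\oplus\MM(u_k)$, and $y$ being in $\MM(x)$ means its component decomposition refines this direct sum. If $x$ is indecomposable, write $x = (v_1 \oplus \cdots \oplus v_k)\ominus 1$ and use the two-term description of $\MM(x)$: either the trailing $1$ stays put (giving an indecomposable element) or it slides into position $n-m$ (giving a decomposable element whose last component is some element of $\MM(v_k)$). The idea is to take the element $y$ and "nudge" its rightmost part: concretely, I would aim to set $\widetilde y$ equal to $y$ except on the entries forming its last component $z_{2\ell}$, replacing the Ungar move that produced $z_{2\ell}$ by a move that instead produces $z_{2\ell} \ominus 1$-compatible data, or by splitting/merging components near the right end so that $\widetilde y$ has an even number of components and its truncation-by-one is an Eeta win. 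The construction should be driven by the observation that adding a final fixed point (the "$\ominus 1$" at the end of $\widetilde y \ominus 1$) and then deleting the last two entries of $\widetilde y \ominus 1$ recovers exactly $z_1 \oplus \cdots \oplus z_{2\ell-1}$ together with the punctured last component; I want the standardization of that to be an Eeta win, which (again by \Cref{prop:Tamari_product}) reduces to checking that the truncated-by-one version of $z_{2\ell}$, adjoined to $z_1,\dots,z_{2\ell-1}$, is componentwise Eeta.

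The main obstacle I anticipate is controlling what happens to the component $z_{2\ell}$ when we remove the second-to-last entry of $\widetilde y\ominus 1$: deleting an entry can merge components or destroy the Eeta-win status of the last component. I expect the resolution to come from a careful case analysis on the size of $z_{2\ell}$ (whether it has size $1$, in which case truncating is trivial, or size $\geq 2$, in which case its indecomposable structure $z_{2\ell} = z' \ominus 1$ lets us understand the effect of the deletion) together with the parity bookkeeping on the number of components — the evenness of $2\ell$ is exactly what lets us absorb a parity change when the deletion splits or merges a component. I would organize the argument so that in each case we either directly exhibit $\widetilde y$ as an appropriate modification of $y$ within $\MM(x)$, or we reduce to the analogous statement for a smaller permutation and invoke induction (on $n$, or on the number of components of $x$). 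Throughout, \Cref{lem:toP2} guarantees that whenever we descend to a sublattice or subinterval we can still find an Eeta win to anchor the construction. Once $\widetilde y$ is built, verifying $\widetilde y \ominus 1$ is even-districted is a routine check against the two bulleted conditions in the definition.
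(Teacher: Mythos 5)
Your high-level strategy is the right one --- keep the initial components of $y$ intact and surgically replace only the final component --- and this is indeed how the paper proceeds. But the proposal stops short of the two concrete ideas that make this work, and the step you flag as ``the main obstacle'' is left genuinely unresolved. First, you never establish that your modified permutation lies in $\MM(x)$, which is the crux. The paper gets this from a structural fact about $x$ itself: writing $y=u\oplus(v\ominus 1)$ with $u$ of odd size $m$, the relation $y\leq x$ forces the value $m+1$ to sit to the right of $m+2,\ldots,n$ in $x$, and $312$-avoidance of $x$ then forces the values $m+1,\ldots,n$ to occupy a consecutive block of positions in $x$ ending with $m+1$. Hence that block standardizes to $z\ominus 1$ for some $z\in\Tam_{n-m-1}$, and $\MM(x)$ factors as (moves on the complementary part $w$) combined independently with (moves on $z\ominus 1$). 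Only this factorization licenses swapping out the final component of $y$ for a \emph{different} element of $\MM(z\ominus 1)$ while retaining $u$. Your proposal instead speaks of ``re-applying Ungar moves to $x$ differently than $y$ did'' and ``nudging the rightmost part,'' without identifying why any such nudge stays inside $\MM(x)$.

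Second, your stated goal --- that ``the truncated-by-one version of $z_{2\ell}$, adjoined to $z_1,\dots,z_{2\ell-1}$, is componentwise Eeta'' --- is aimed at the wrong object: if the last component of $y$ is $v\ominus 1$, there is no reason for $v$ to be an Eeta win just because $v\ominus 1$ is, so truncating $y$'s own last component cannot work, and no amount of parity bookkeeping or case analysis on $|z_{2\ell}|$ fixes this. The paper's resolution is to abandon $y$'s last component entirely: choose $z'\in\MM(z)\cap\Eeta(\Tam_{n-m-1})$ (which exists by \Cref{lem:toP2}, applied to the $z$ determined by $x$, not by $y$), note $z'\ominus 1\in\MM(z\ominus 1)$, and set $\widetilde y=u\oplus(z'\ominus 1)$. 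This single choice delivers both requirements at once: $\widetilde y$ has an even number of components ($u$ contributes an odd number plus the one indecomposable $z'\ominus 1$), and deleting the last two entries of $\widetilde y\ominus 1$ yields $u\oplus z'$, which is an Eeta win by \Cref{prop:Tamari_product}. You correctly sense that \Cref{lem:toP2} should ``anchor the construction,'' but without pinning down where it is applied and why the result lands in $\MM(x)$, the argument as written does not close.
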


\begin{proof}
Let $v\ominus 1$ be the final component of $y$. Then $y=u\oplus(v\ominus 1)$, where $u$ has an odd number of components. Let $m$ be the size of $u$ (so $u\in\Tam_m$). Because $y\in\Eeta(\Tam_n)$, we know by \Cref{prop:Tamari_product} that all of the components of $u$ are Eeta wins in their respective Tamari lattices. The number $m+1$ is the last entry in $y$. Since $y\in\MM(x)$, we know that $y\leq x$ in the weak order. This implies that $m+1$ appears to the right of the entries $m+2,\ldots,n$ in $x$. Let $r=x^{-1}(m+1)$. Because $x$ is $312$-avoiding, the entries in positions $r-(n-m)+1,\ldots,r-1$ in $x$ are the numbers $m+2,\ldots,n$ in some order; that is \[\{x(r-(n-m)+i):1\leq i\leq n-m-1\}=\{m+2,\ldots,n\}.\] Let $z\in\Tam_{n-m-1}$ be the standardization of the sequence $x(r-(n-m)+1)\cdots x(r-1)$. According to \Cref{lem:toP2}, there exists $z'\in\MM(z)\cap\Eeta(\Tam_{n-m-1})$. Note that $z'\ominus 1\in\MM(z\ominus 1)$. 

Applying an Ungar move to $x$ amounts to moving the entries $1,\ldots,m$ and then moving the entries $m+1,\ldots,n$ independently. To make this more precise, let $w\in\Tam_{m+1}$ be the permutation obtained from $x$ by deleting the entries $m+2,\ldots,n$, and let $Z$ be the set of permutations of the set $\{m+1,\ldots,n\}$ whose standardizations are in $\MM(z\ominus 1)$. Then $\MM(x)$ is the set of permutations that can be obtained by selecting a permutation $w'\in\MM(w)$ and then replacing the entry $m+1$ in $w'$ with a permutation in $Z$. Since $y=u\oplus(v\ominus 1)\in\MM(x)$, it must be the case that $u\oplus 1\in\MM(w)$. Also, there is a permutation in $Z$ whose standardization is $z'\ominus 1$. It follows that $u\oplus(z'\ominus 1)\in\MM(x)$. Let $\widetilde y=u\oplus(z'\ominus 1)$. To complete the proof, we just need to show that $\widetilde y\ominus 1$ is even-districted. 

The components of $\widetilde y$ are the components of $u$ and the indecomposable permutation $z'\ominus 1$. Since $u$ has an odd number of components, $\widetilde y$ has an even number of components. If we delete the last two entries from $\widetilde y\ominus 1$ and then standardize, we obtain $u\oplus z'$. We observed above that all of the components of $u$ are Eeta wins, and we chose $z'$ to be an Eeta win. Therefore, it follows from \Cref{prop:Tamari_product} that $u\oplus z'$ is an Eeta win. This demonstrates that $\widetilde y\ominus 1$ is even-districted. 
\end{proof}

\begin{example}\label{exam:Tamari1}
Preserve the notation from the proof of \Cref{lem:ribute}. Let $n=9$. Suppose \[x=237986541=\begin{array}{l}\includegraphics[height=2.492cm]{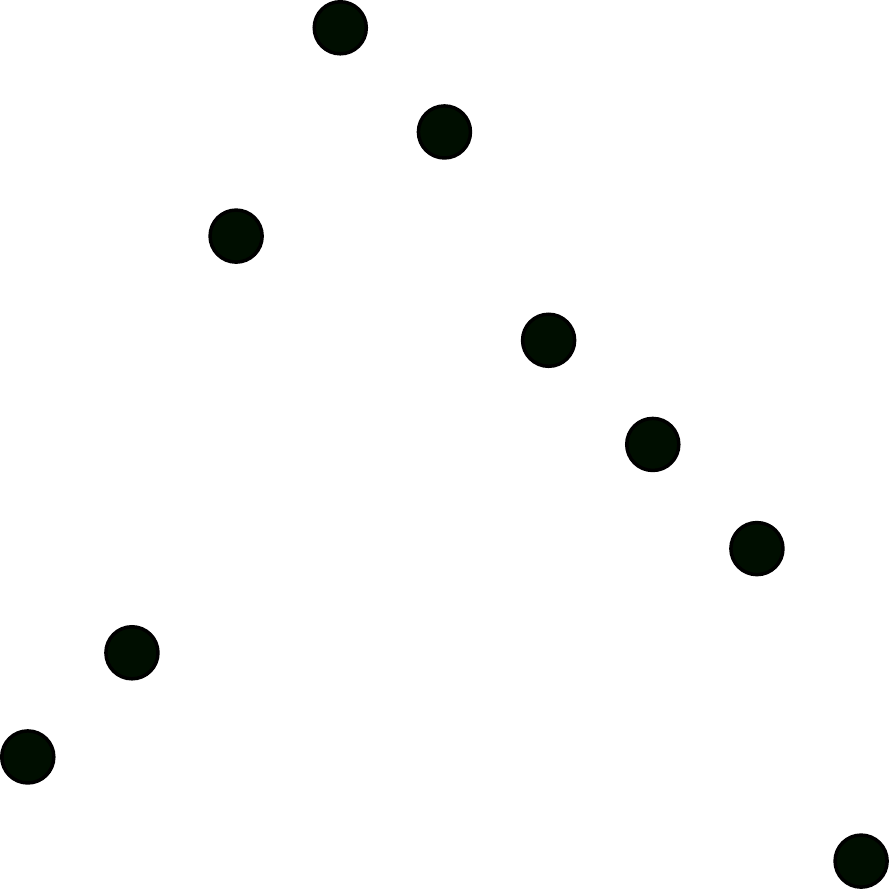}\end{array}\quad \text{and}\quad y=231457986=\begin{array}{l}\includegraphics[height=2.492cm]{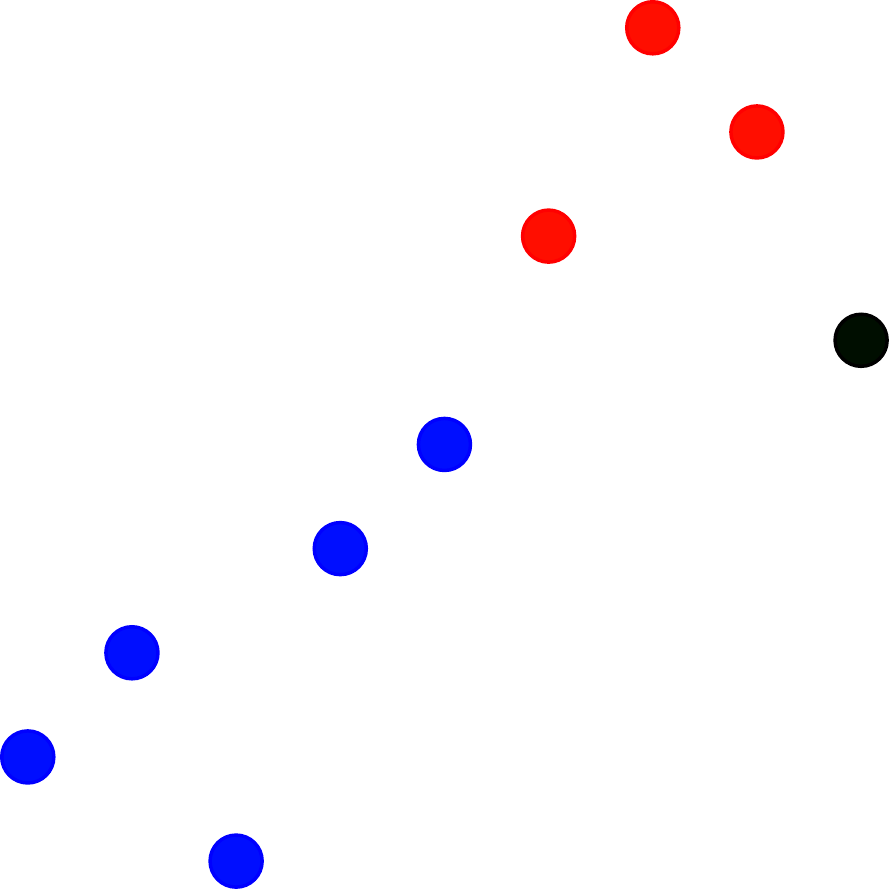}\end{array}.\] 
Then we have \[u=23145=\begin{array}{l}\includegraphics[height=1.324cm]{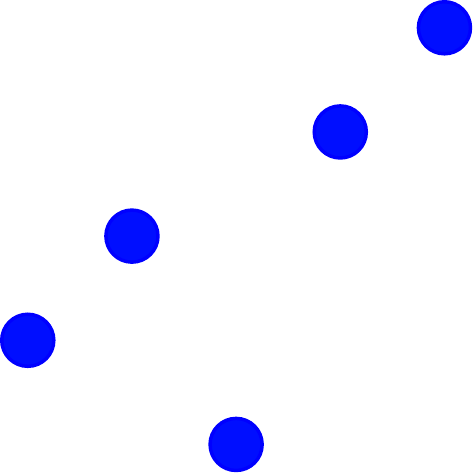}\end{array}\quad \text{and}\quad v=132=\begin{array}{l}\includegraphics[height=0.74cm]{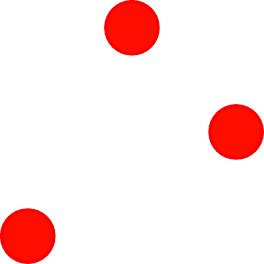}\end{array}.\] We have $m=5$ and $r=x^{-1}(6)=6$. The sequence \[x(r-(n-m)+1)\cdots x(r-1)=x(3)x(4)x(5)=798\] has standardization $z=132$. We must choose a permutation \[z'\in\MM(z)\cap\Eeta(\Tam_{3})=\MM(132)\cap\Eeta(\Tam_3);\] in this particular example, our only choice is to set $z'=123$. The permutation obtained from $x$ by deleting the entries $7,8,9$ is \[w=236541=\begin{array}{l}\includegraphics[height=1.616cm]{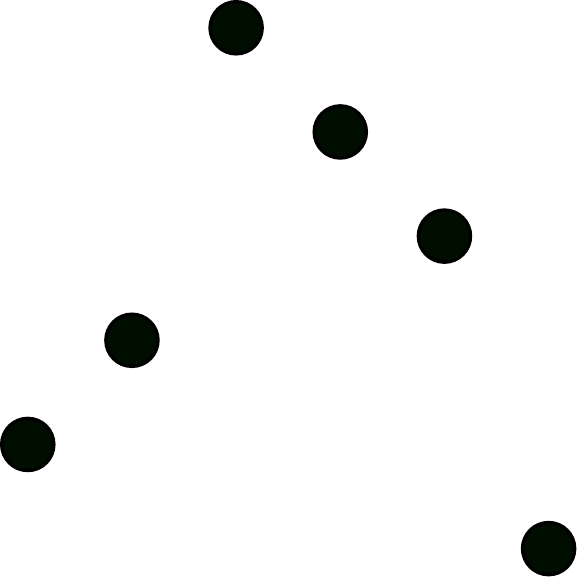}\end{array}.\] As observed in the proof of \Cref{lem:ribute}, we have $u\oplus 1=231456\in\MM(w)$. Finally, we set \[\widetilde y=u\oplus (z'\ominus 1)=231457896=\begin{array}{l}\includegraphics[height=2.493cm]{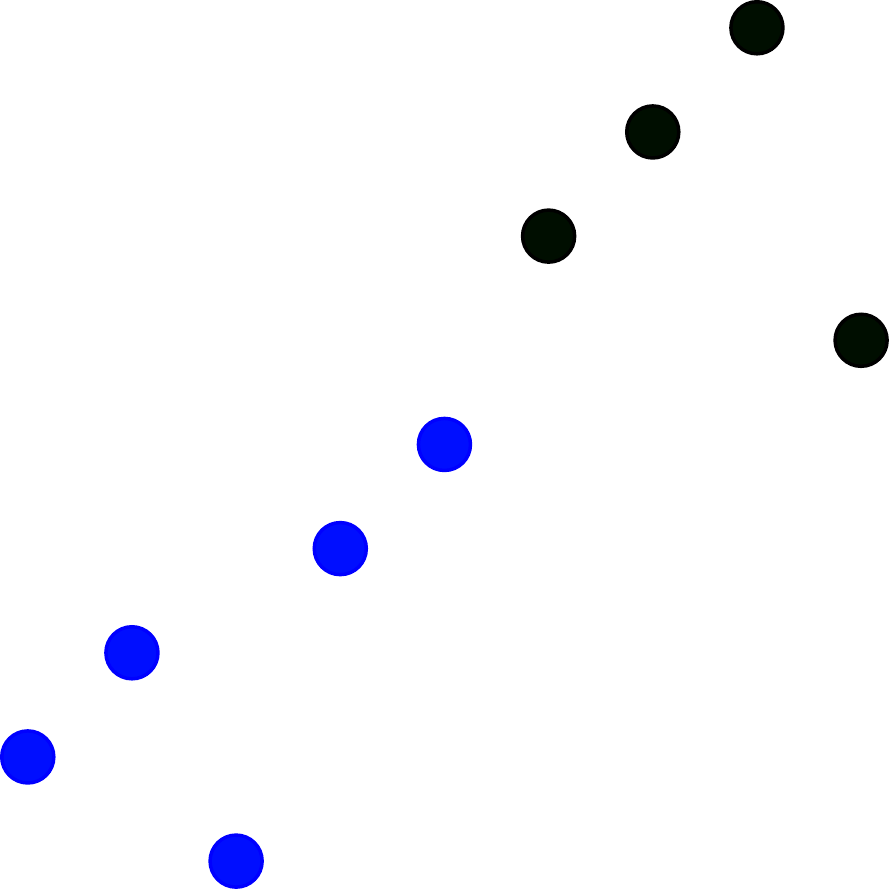}\end{array},\] and we observe that \[\widetilde y\ominus 1=(u\oplus (z'\ominus 1))\ominus 1=3\,4\,2\,5\,6\,8\,9\,10\,7\,1=\begin{array}{l}\includegraphics[height=2.784cm]{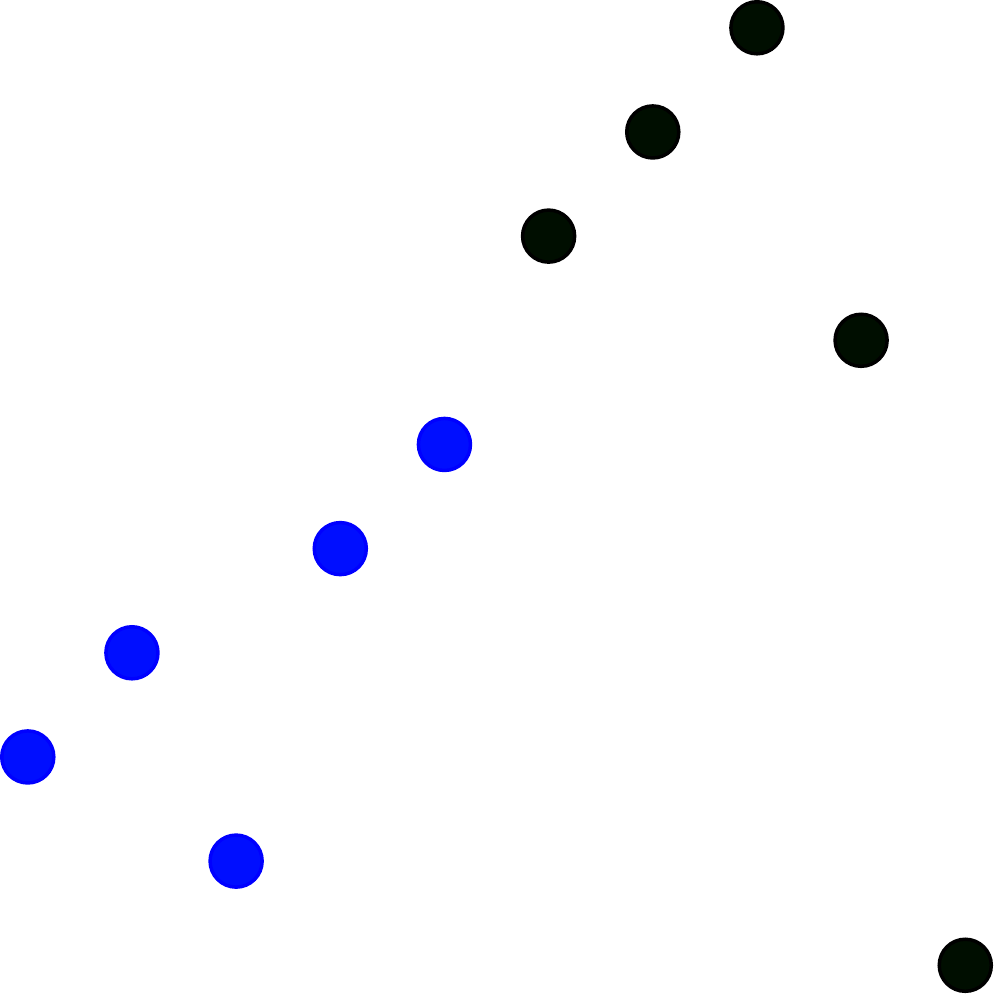}\end{array}\] is indeed even-districted. 
\end{example}

\begin{lemma}\label{lem:ribute_implies_Eeta}
Suppose $z\in\Tam_n$ is even-districted and $z'$ is a decomposable element of $\MM(z)$. Then the first component of $z'$ is not even-districted. 
\end{lemma}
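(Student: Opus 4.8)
The plan is to argue by strong induction on $|z|$, invoking \Cref{prop:Tamari_characterization} and the present lemma for all permutations of size smaller than $|z|$; formally the two statements are established together by a single induction on size. If $|z|=1$, then $\MM(z)=\{z\}$ has no decomposable element and there is nothing to prove, so assume $|z|=n\geq 3$ and write $z=(v_1\oplus\cdots\oplus v_k)\ominus 1$, where $v_1,\ldots,v_k$ are the components of the unique $w'\in\Tam_{n-1}$ with $z=w'\ominus 1$. By the definition of even-districted, $k$ is even (so $k\geq 2$) and the standardization of $z(1)\cdots z(n-2)$ is an Eeta win in $\Tam_{n-2}$; a direct computation identifies this standardization with $v_1\oplus\cdots\oplus v_{k-1}\oplus v_k^{\bullet}$, where $v_k^{\bullet}$ denotes the standardization of $v_k$ with its last entry deleted, so \Cref{prop:Tamari_product} shows that $v_1,\ldots,v_{k-1}$ are all Eeta wins. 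In the recursive description of $\MM(z)$ for indecomposable $z$, the first piece $(\MM(v_1)\oplus\cdots\oplus\MM(v_k))\ominus\{1\}$ consists of permutations ending in $1$, hence indecomposable; so every decomposable $z'\in\MM(z)$ lies in the second piece: $z'=\big((a_1\oplus\cdots\oplus a_{k-1})\ominus 1\big)\oplus a_k$ with $a_i\in\MM(v_i)$, and its first component is $c:=(a_1\oplus\cdots\oplus a_{k-1})\ominus 1$. We must show that $c$ is not even-districted.

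Suppose toward a contradiction that $c$ is even-districted. No permutation of size $2$ is even-districted, and $|c|=1+\sum_{i=1}^{k-1}|a_i|\geq 2$, so $|c|\geq 3$. Write $c=\widehat{c}\ominus 1$ with $\widehat{c}=a_1\oplus\cdots\oplus a_{k-1}$. The definition of even-districted gives that (i) $\widehat{c}$ has an even number of components, and (ii) the standardization of $c(1)\cdots c(|c|-2)$ is an Eeta win; by the same computation as before, this standardization equals $a_1\oplus\cdots\oplus a_{k-2}\oplus a_{k-1}^{\bullet}$, where $a_{k-1}^{\bullet}$ is the standardization of $a_{k-1}$ with its last entry deleted. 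By \Cref{prop:Tamari_product}, (ii) forces $a_1,\ldots,a_{k-2}$ and $a_{k-1}^{\bullet}$ to be Eeta wins. Since $a_i\in\MM(v_i)$, since $v_i$ is an Eeta win, and since an Eeta win is the only Eeta win among the elements of its own $\MM$-set (immediate from the recursive characterization of Eeta wins), we conclude $a_i=v_i$ for $1\leq i\leq k-2$; in particular each such $a_i$ is indecomposable.

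Consequently the number of components of $\widehat{c}$ equals $(k-2)+\big(\text{number of components of }a_{k-1}\big)$. By (i) this total is even and $k-2$ is even, so $a_{k-1}$ has an even number of components, hence at least two; thus $a_{k-1}$ is decomposable and in particular $a_{k-1}\neq v_{k-1}$. Let $q$ be the first component of $a_{k-1}$. Because $a_{k-1}$ has at least two components, deleting its last entry does not affect $q$, so $q$ is a direct summand of $a_{k-1}^{\bullet}$; since $a_{k-1}^{\bullet}$ is an Eeta win, \Cref{prop:Tamari_product} gives that $q$ is an Eeta win. On the other hand, $|v_{k-1}|<|z|$, so the indecomposable Eeta win $v_{k-1}$ is even-districted by \Cref{prop:Tamari_characterization}; applying the present lemma to $v_{k-1}$ and the decomposable element $a_{k-1}\in\MM(v_{k-1})$, we conclude that $q$ is not even-districted. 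But $|q|<|v_{k-1}|<|z|$, so \Cref{prop:Tamari_characterization} shows that $q$ is not an Eeta win --- contradicting the previous sentence. Hence $c$ is not even-districted, which completes the induction.

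The main difficulty is bookkeeping rather than a single hard idea: one has to verify carefully the formula for the standardization of a permutation with its last two entries deleted in terms of the direct summands (used for both $z$ and $c$), track the parity of component counts across the two pieces of the $\MM$-decomposition, and confirm that $v_{k-1}$, $a_{k-1}$, and $q$ are all strictly smaller than $z$ so that the inductive hypotheses apply. The conceptual heart --- that an even-districted first component $c$ would force the decomposable element $a_{k-1}\in\MM(v_{k-1})$ to have an Eeta-win (hence even-districted) first component, which the inductive hypothesis forbids --- is short, and it is precisely what dictates the combined induction with \Cref{prop:Tamari_characterization}.
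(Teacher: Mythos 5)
Your proof is correct, but it takes a genuinely different route from the paper's. The paper proves this lemma as a standalone statement, with no induction and no appeal to \cref{prop:Tamari_characterization}: writing the first component of $z'$ as $(y_1'\oplus\cdots\oplus y_{r-1}')\ominus 1$, it splits into three cases (some $y_j'\neq y_j$ with $j\le r-2$; all such $y_j'=y_j$ and $y_{r-1}'$ indecomposable; all such $y_j'=y_j$ and $y_{r-1}'$ decomposable), and in the last case it shows directly that some non-final component of $y_{r-1}'$ is an Atniss win by replacing the final factor $x_t'$ with an Eeta win $x_t''\in\MM(x_t)$ supplied by \cref{lem:toP2} and using that $y_{r-1}$ is an Eeta win. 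Your argument by contradiction reproduces the first two cases implicitly (your deduction that $a_i=v_i$ for $i\le k-2$ and the parity count forcing $a_{k-1}$ decomposable), but handles the decomposable case by invoking \cref{prop:Tamari_characterization} and the lemma itself at strictly smaller sizes, so the two statements must be established by a single joint induction on $n$. That restructuring is legitimate and well-founded (your proof of the lemma at size $n$ only uses both statements at sizes $<n$, while the paper's proof of the proposition at size $n$ uses the lemma at sizes $\le n$), but it entangles the lemma with the proposition, whereas the paper's version keeps the lemma independent of the characterization and thus avoids any worry about circularity. The trade-off is that your contradiction argument replaces the paper's one clever local construction (the substitution of $x_t''$) with a more mechanical appeal to the inductive hypotheses; both are valid, and yours is arguably more uniform once the joint induction is set up, at the cost of a heavier logical scaffold.
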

\begin{proof}
Let us write $z=(y_1\oplus\cdots\oplus y_r)\ominus 1$, where $y_1,\ldots,y_r$ are indecomposable. Let $y_r=\widehat y\ominus 1$. The assumption that $z$ is even-districted is equivalent to the assertion that $r$ is even and $y_1\oplus\cdots\oplus y_{r-1}\oplus \widehat y$ is an Eeta win. In particular, it follows from \Cref{prop:Tamari_product} that $y_1,\ldots,y_{r-1}$ are Eeta wins. According to our description of Tamari lattice Ungar moves, we can write $z'=((y_1'\oplus\cdots\oplus y_{r-1}')\ominus 1)\oplus y_r'$, where $y_i'\in\MM(y_i)$ for all $1\leq i\leq r$. The first component of $z'$ is $(y_1'\oplus\cdots\oplus y_{r-1}')\ominus 1$, so we need to show that this is not even-districted. We consider a few cases. 

\medskip 

\noindent {\bf Case 1.} Suppose that $y_j'\neq y_j$ for some $j\in[r-2]$. Since $y_j$ is an Eeta win, $y_j'$ is an Atniss win. This implies (by \Cref{prop:Tamari_product}) that the standardization of the permutation obtained by deleting the last two entries from $(y_1'\oplus\cdots\oplus y_{r-1}')\ominus 1$ is an Atniss win, so $(y_1'\oplus\cdots\oplus y_{r-1}')\ominus 1$ is not even-districted. 

\medskip 

\noindent {\bf Case 2.} Suppose that $y_j'=y_j$ for all $j\in[r-2]$ and that $y_{r-1}'$ is indecomposable. Then $y_1'\oplus\cdots\oplus y_{r-1}'$ has $r-1$ components, so $(y_1'\oplus\cdots\oplus y_{r-1}')\ominus 1$ is not even-districted because $r-1$ is odd. 

\medskip 

\noindent {\bf Case 3.} Suppose that $y_j'=y_j$ for all $j\in[r-2]$ and that $y_{r-1}'$ is decomposable. Let us write $y_{r-1}=(x_1\oplus \cdots\oplus x_t)\ominus 1$ for some indecomposable permutations $x_1,\ldots,x_t$. According to our description of Tamari lattice Ungar moves, we must have $y_{r-1}'=((x_1'\oplus\cdots\oplus x_{t-1}')\ominus 1)\oplus x_t'$, where $x_i'\in\MM(x_i)$ for all $1\leq i\leq t$. Then we have \[z=\begin{array}{l}\includegraphics[height=5cm]{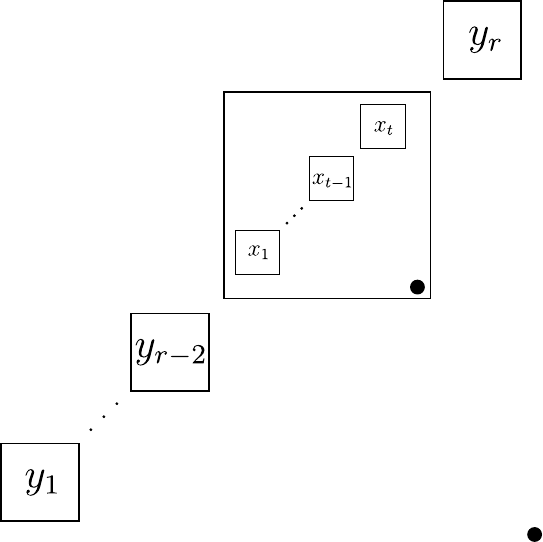}\end{array}\qquad\qquad\text{and}\qquad\qquad z'=\begin{array}{l}\includegraphics[height=5cm]{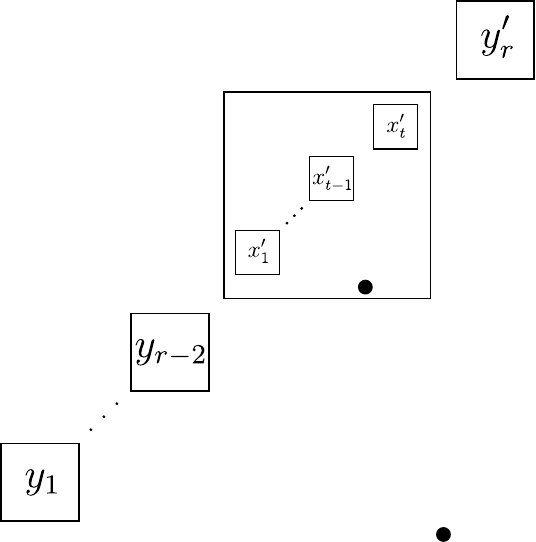}\end{array}.\] \Cref{lem:toP2} tells us that there exists an Eeta win $x_t''$ in $\MM(x_t)$. Then $((x_1'\oplus\cdots\oplus x_{t-1}')\ominus 1)\oplus x_t''$ is in $\MM(y_{r-1})$ and is not equal to $y_{r-1}$ because $y_{r-1}$ is indecomposable. Because $y_{r-1}$ is an Eeta win, this implies that $((x_1'\oplus\cdots\oplus x_{t-1}')\ominus 1)\oplus x_t''$ is an Atniss win. But $x_t''$ is an Eeta win, so it follows from \Cref{prop:Tamari_product} that $(x_1'\oplus\cdots\oplus x_{t-1}')\ominus 1$ is an Atniss win. This shows that some non-final component of $y_{r-1}'$ is an Atniss win, so some non-final component of $y_1'\oplus\cdots\oplus y_{r-1}'$ is an Atniss win. By \Cref{prop:Tamari_product}, the standardization of the permutation obtained by deleting the last two entries from $(y_1'\oplus\cdots\oplus y_{r-1}')\ominus 1$ is an Atniss win, so $(y_1'\oplus\cdots\oplus y_{r-1}')\ominus 1$ is not even-districted.  
\end{proof}

We can now prove \Cref{prop:Tamari_characterization}. 

\begin{proof}[Proof of \Cref{prop:Tamari_characterization}]
It is easy to check that the desired result holds when $n\leq 2$. Therefore, we may assume $n\geq 3$ and proceed by induction on $n$. Let $w\in\Tam_n$ be indecomposable. We will prove that $w\in\Eeta(\Tam_n)$ if and only if $w$ is even-districted. We may also apply induction on the lattice $\Tam_n$. In other words, we may assume that the set of indecomposable Eeta wins that are less than $w$ in $\Tam_n$ is equal to the set of even-districted permutations that are less than $w$ in $\Tam_n$. 

Assume first that $w$ is even-districted. Suppose $x\in\MM(w)\setminus\{w\}$; we need to show that $x$ is an Atniss win. If $x$ is decomposable, then we can set $z=w$ and $z'=x$ in \Cref{lem:ribute_implies_Eeta} to find that the first component of $x$ is not even-districted. By induction, this implies that the first component of $x$ is an Atniss win, so it follows from \Cref{prop:Tamari_product} that $x$ is an Atniss win. 

Now assume $x$ is indecomposable. Let $x=x'\ominus 1$. Because $x<w$ in $\Tam_n$, we can use induction to see that $x$ is an Atniss win if and only if it is not even-districted; thus, we need to show that $x$ is not even-districted. Let $q$ be the standardization of the permutation obtained by deleting the last two entries from $x$. It suffices to show either that $x'$ has an odd number of components or that $q$ is an Atniss win. Let us write $w=(u_1\oplus \cdots\oplus u_r)\ominus 1$, where $u_1,\ldots,u_r$ are indecomposable. Then $x'=u_1'\oplus\cdots\oplus u_r'$, where $u_i'\in\MM(u_i)$ for all $1\leq i\leq r$. Let $u_r=y\ominus 1$. Our assumption that $w$ is even-districted tells us that $r$ is even and that $u_1\oplus\cdots\oplus u_{r-1}\oplus y$ is an Eeta win. It follows from \Cref{prop:Tamari_product} that $u_1,\ldots,u_{r-1},y$ are Eeta wins. 
We now consider three cases. 

\medskip 

\noindent {\bf Case 1.} Suppose $u_j'\neq u_j$ for some $j\in[r-1]$. Because $u_j$ is an Eeta win and $u_j'\in\MM(u_j)$, we know that $u_j'$ is an Atniss win. It follows from \Cref{prop:Tamari_product} that $q$ is an Atniss win, so $x$ is not even-districted. 

\medskip 

\noindent {\bf Case 2.} Suppose that $u_j'=u_j$ for all $j\in[r-1]$ and that $u_r'$ is indecomposable. Then $u_r'=y'\ominus 1$ for some $y'\in\MM(y)\setminus\{y\}$. Since $y$ is an Eeta win, $y'$ is an Atniss win. Thus, $q=u_1\oplus\cdots\oplus u_{r-1}\oplus y'$ is an Atniss win by \Cref{prop:Tamari_product}. This proves that $x$ is not even-districted. 

\medskip 

\noindent {\bf Case 3.} Suppose that $u_j'=u_j$ for all $j\in[r-1]$ and that $u_r'$ is decomposable. We can write $y=v_1\oplus\cdots\oplus v_t$, where $v_1,\ldots,v_t$ are the components of $y$. Because $y$ is an Eeta win, we know by \Cref{prop:Tamari_product} that $v_1,\ldots,v_t$ are Eeta wins. Our induction hypothesis guarantees that $v_1,\ldots,v_t$ are even-districted. Since $u_r'$ is decomposable, we have $u_r'=((v_1'\oplus\cdots\oplus v_{t-1}')\ominus 1)\oplus v_t'$, where $v_i'\in\MM(v_i)$ for all $1\leq i\leq t$. If $v_t'$ is indecomposable, then $u_r'$ has exactly $2$ components, so $x'=u_1\oplus\cdots\oplus u_{r-1}\oplus u_r'$ has exactly $r+1$ components. In this case, $x$ is not even-districted because $r+1$ is odd. Thus, we may assume that $v_t'$ is decomposable. Applying \Cref{lem:ribute_implies_Eeta} with $z=v_t$ and $z'=v_t'$, we find that the first component of $v_t'$ is not even-districted. By induction, the first component of $v_t'$ is an Atniss win. The first component of $v_t'$ is a non-final component of $u_r'$, so it is also a non-final component of $x'$. This implies that the first component of $v_t'$ is also a component of $q$, so $q$ is an Atniss win by \Cref{prop:Tamari_product}. 

\medskip 

We have proven that if $w$ is even-districted, then it is an Eeta win. To prove the converse, let us now assume $w$ is not even-districted; our goal is to show that $w$ is an Atniss win. Hence, we need to show that there exists an Eeta win in $\MM(w)\setminus\{w\}$. Let us write $w=w'\ominus 1$, and let $v$ be the final component of $w'$. Let $v=v'\ominus 1$. By \Cref{lem:toP2}, there exist Eeta wins $z\in\MM(v)$ and $z'\in\MM(v')$. If $w'$ is indecomposable, then $v=w'$, so $1\oplus z$ is an Eeta win in $\MM(w)\setminus\{w\}$. Thus, we may assume $w'$ is decomposable and write $w'=u\oplus v$ for some (possibly decomposable) permutation $u$. We consider three cases. 

\medskip 

\noindent {\bf Case 1.} Suppose $u$ is an Atniss win. Then there exists an Eeta win $\widehat u\in\MM(u)\setminus\{u\}$. Note that $(\widehat u\oplus(z'\ominus 1))\ominus 1\in\MM(w)\setminus\{w\}$. If $\widehat u$ has an odd number of components, then we can use \Cref{prop:Tamari_product} to see that $(\widehat u\oplus(z'\ominus 1))\ominus 1$ is even-districted (because $\widehat u$ and $z'$ are Eeta wins), so it follows by induction that $(\widehat u\oplus(z'\ominus 1))\ominus 1$ is an Eeta win. Now suppose $\widehat u$ has an even number of components. According to \Cref{lem:ribute}, there exists $\widehat u'\in\MM(u)$ such that $\widehat u'\ominus 1$ is even-districted. By induction, $\widehat u'\ominus 1$ is an Eeta win. Consequently, $(\widehat u'\ominus 1)\oplus z$ is an Eeta win in $\MM(w)\setminus\{w\}$. 

\medskip 

\noindent {\bf Case 2.} Suppose $u$ is an Eeta win with an even number of components. Since $u\in\MM(u)$, we can appeal to \Cref{lem:ribute} to find that there exists $u'\in\MM(u)$ such that $u'\ominus 1$ is even-districted. By induction, $u'\ominus 1$ is an Eeta win. Consequently, $(u'\ominus 1)\oplus z$ is an Eeta win in $\MM(w)\setminus\{w\}$. 

\medskip 

\noindent {\bf Case 3.} Suppose $u$ is an Eeta win with an odd number of components. Then $u\oplus z'$ is an Eeta win by \Cref{prop:Tamari_product}, so $(u\oplus (z'\ominus 1))\ominus 1$ is even-districted. Also, $(u\oplus (z'\ominus 1))\ominus 1$ is in $\MM(w)\setminus\{w\}$ (notice that $(u\oplus (z'\ominus 1))\ominus 1 \neq w$ by our assumption that $w$ is not even-districted). This implies that $(u\oplus (z'\ominus 1))\ominus 1<w$ in $\Tam_n$, so by induction, $(u\oplus (z'\ominus 1))\ominus 1$ is an Eeta win. 
\end{proof}

Having recursively characterized Eeta wins in Tamari lattices via \Cref{prop:Tamari_product,prop:Tamari_characterization}, we can now enumerate them. 

\begin{proof}[Proof of \Cref{thm:Tamari}]
Let $G(z)=\sum_{n\geq 1}g_nz^n$, where $g_n$ is the number of even-districted elements of $\Tam_n$. For $n\geq 3$, it follows from \Cref{prop:Tamari_product,prop:Tamari_characterization} that every even-districted element of $\Tam_n$ can be written uniquely in the form $(u_1\oplus\cdots\oplus u_k\oplus((u_{k+1}\oplus\cdots\oplus u_{r})\ominus 1))\ominus 1$, where $k$ is odd, $r\geq k$, and $u_1,\ldots,u_r$ are even-districted. Thus, \[g_n=\sum_{\substack{r\geq k\geq 1 \\ k\text{ odd}}}\sum_{\substack{n_1,\ldots,n_r\geq 1 \\ n_1+\cdots+n_r=n-2}}g_{n_1}\cdots g_{n_r}=\sum_{r\geq 1}\left\lceil r/2\right\rceil\sum_{\substack{n_1,\ldots,n_r\geq 1 \\ n_1+\cdots+n_r=n-2}}g_{n_1}\cdots g_{n_r}.\]
Translating this recurrence into generating functions yields 
\begin{align}\label{eq:G(z)}
G(z)&=z+z^2\sum_{r\geq 1}\left\lceil r/2\right\rceil G(z)^r \nonumber \\ 
&=z+z^2\sum_{m\geq 1}m(G(z)^{2m-1}+G(z)^{2m}) \nonumber \\ 
&=z+z^2(G(z)+G(z)^2)\sum_{m\geq 1}m(G(z)^2)^{m-1} \nonumber \\ 
&=z+z^2\frac{G(z)+G(z)^2}{(1-G(z)^2)^2}.
\end{align}

Let $F(z)=\sum_{n\geq 1}|\Eeta(\Tam_n)|z^n$. According to \Cref{prop:Tamari_characterization}, $G(z)$ is the generating function for indecomposable Tamari lattice Eeta wins. As a consequence, $F(z)=\frac{G(z)}{1-G(z)}$. Equivalently, $G(z)=\frac{F(z)}{1+F(z)}$. After substituting this into \eqref{eq:G(z)} and performing basic algebraic manipulations, we find that $Q(F(z),z)=0$, where \[Q(y,z)=z + (-1 + 3 z + z^2) y + (-2 + 2 z + 3 z^2) y^2 + 3 z^2 y^3 + z^2 y^4.\] 

The method used to derive the asymptotics in the statement of the theorem is routine and is discussed in \cite[Chapter~VII]{Flajolet}; we will just sketch the details. Let $\rho=\lim\limits_{n\to\infty}|\Eeta(\Tam_n)|^{1/n}$. The discriminant of $Q(y,z)$ with respect to $y$ is $z^2\widehat Q(z)$, where \[\widehat Q(z)=32 - 32 z - 155 z^2 - 20 z^3 - 148 z^4 + 60 z^5 - 8 z^6 - 4 z^7.\] Pringsheim's theorem states that $1/\rho$ must be a positive real root of this discriminant, so $\rho$ is a positive real root of $z^7\widehat Q(1/z)$. One can check that $z^7\widehat Q(1/z)$ has a unique positive real root. One can then use a computer algebra software such as Maple to expand $F(z)$ as a Puiseux series centered at $1/\rho$; the result is $\beta_0+{\beta_1(z-1/\rho)^{1/2}}+o((z-\rho)^{1/2})$ for some explicitly computable algebraic numbers $\beta_0$ and $\beta_1$. Following the discussion in \cite[Chapter~VII]{Flajolet}, this expansion transfers into an asymptotic formula of the form \[|\Eeta(\Tam_n)|\sim \frac{\gamma}{\sqrt{\pi}}n^{-3/2}\rho^n,\] and one can use a computer algebra software to find that the minimal polynomial of $\gamma$ is as stated in the theorem.
\end{proof}

\section{Open Problems}\label{sec:open}

\subsection{The Weak Order}\label{sec:open_weak}
Although \Cref{thm:Weak} provides an asymptotic upper bound for the number of Eeta wins in the weak order on $S_n$, we are still far from fully understanding these Ungar games. It would be interesting to improve the upper bound in \Cref{thm:Weak} or find a nontrivial lower bound.  For instance, does the number of Eeta wins grow more like $c^{n} n!$ or more like $(n!)^c$ (each for some $c<1$)?

Consider the set $B$ of permutations from the statement of \Cref{lem:pattern_avoidance}. We deduced \Cref{thm:Weak} from that lemma and a known asymptotic estimate for the number of permutations in $S_n$ that consecutively avoid $1324$. It could be interesting to more accurately enumerate (either exactly or asymptotically) the permutations that consecutively avoid \emph{all} of the patterns in $B$; this would immediately yield an improvement upon \Cref{thm:Weak}. 

An earlier version of this work included the conjecture that if a permutation $w$ is an Eeta win in the weak order on $S_n$, then $w$ has at most $\frac{n-1}{2}$ descents.  This conjecture was disproved by Evan Bailey, who used a computer to find all counterexamples for $n \leq 14$.  The first counterexamples appear for $n=10$; for example, the permutation with one-line notation $3, 10, 9, 8, 4, 7, 2, 5, 1, 6$ is an Eeta win with $5$ descents.

\subsection{Other Lattices}

\Cref{thm:Young} considers a large class of intervals in Young's lattice and characterizes which of them are Eeta wins. It would be interesting to extend this characterization to \emph{all} intervals in Young's lattice. 

Of course, it would also be interesting to study Ungar games on other lattices beyond those considered here. For example, since Young's lattice is $J(\mathbb N^2)$, it is natural to ask what can be said about Ungar games on principal order ideals of $J(\mathbb N^3)$. Another well-studied lattice that is similar in many ways to Young's lattice is the Young--Fibonacci lattice, which was introduced by Fomin~\cite{Fomin} and Stanley \cite{StanleyDifferential}; note, however, that this lattice is not distributive. The number of elements of rank $n$ in the Young--Fibonacci lattice is the Fibonacci number $f_n$, where we use the conventions $f_0=f_1=1$ and $f_{n}=f_{n-1}+f_{n-2}$ for $n\geq 2$. 

\begin{conjecture}
    For $n\geq 2$, the number of Eeta wins of rank $n$ in the Young--Fibonacci lattice is $f_{n-2}+(-1)^{n}$.
\end{conjecture}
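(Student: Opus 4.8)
The plan is to understand the structure of the Young--Fibonacci lattice $\mathrm{YF}$ well enough to make the recursive Atniss/Eeta computation explicit, and then to prove the closed formula by induction on $n$. Recall that $\mathrm{YF}$ has the property that every element $w$ of rank $n$ is obtained from elements of rank $n-1$ in a controlled way: $w$ either arises from a unique element of rank $n-1$ by prepending a digit $1$ (the ``$1w'$'' construction), or from an element of rank $n-2$ by prepending a $2$; equivalently, $\mathrm{YF}$ is built from Fibonacci words (words in $\{1,2\}$ with digit sum $n$). The key combinatorial task is to describe $\cov_{\mathrm{YF}}(w)$ and, more importantly, the meet structure, so that $\MM(w)\setminus\{w\}$ can be computed. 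I expect that principal order ideals $[\hat 0,w]$ factor as products in a way analogous to \Cref{prop:Tamari_product}: if a Fibonacci word decomposes in some natural sense, the interval is a product of smaller such intervals, and \Cref{lem:product} reduces the analysis to ``indecomposable'' words. First I would pin down exactly this product decomposition and identify the indecomposable pieces (presumably words of the form $2\cdots 2$ or $2\cdots 2\,1$, or their reverses).

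Next, I would set up a two-variable bookkeeping: let $a_n$ be the number of Eeta wins of rank $n$ and, if needed, an auxiliary count $b_n$ (e.g.\ the number of \emph{indecomposable} Eeta wins, or the number of Eeta wins satisfying some parity/structural side condition that is forced by the recursion, much as ``even-districted'' was the right auxiliary notion for Tamari). The heart of the argument is a recursive characterization of Eeta wins mirroring \Cref{prop:Tamari_characterization}: an indecomposable word $w$ of rank $n$ should be an Eeta win precisely when applying the prepend-$1$ (and prepend-$2$) operations to an Eeta/Atniss win of smaller rank produces the right parity of options. Because $f_n$ grows like the golden ratio to the $n$, and because the recursion only ever ``peels off'' one or two digits at a time, I anticipate the generating function $\sum_n a_n z^n$ will be rational with denominator dividing $(1-z-z^2)(1+z)$ or similar, which is exactly what the conjectured answer $f_{n-2}+(-1)^n$ predicts (its generating function is $\tfrac{z^2}{1-z-z^2}+\tfrac{1}{1+z}$ up to low-order corrections). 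So the plan is: (i) derive the recursion $a_n = (\text{something involving } a_{n-1}, a_{n-2}, b_{n-1}, b_{n-2})$ from the structural characterization; (ii) likewise for $b_n$; (iii) solve the linear system of generating functions; (iv) check the initial terms against small cases computed by hand (ranks $2,3,4,5$).

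The main obstacle, I expect, is step (i): getting the recursive Eeta/Atniss characterization correct. The Young--Fibonacci lattice is not distributive, so Ungar moves are not simply ``delete a subset of maximal elements of an order ideal,'' and the meet $\bigwedge(\{w\}\cup T)$ for $T\subseteq\cov(w)$ must be computed inside $\mathrm{YF}$ directly; this requires a clean combinatorial model of meets of Fibonacci words. Once the set $\MM(w)\setminus\{w\}$ is described (I suspect it has a product structure over the ``blocks'' of the word, each block contributing an independent choice, so that a single $w$ with $c$ blocks gives a product of $c$ small games), the parity analysis that determines whether \emph{all} options are Atniss wins should run in close parallel to the Tamari argument, including the need for ``throwaway move'' lemmas analogous to \Cref{lem:deep_Atniss} and \Cref{lem:ribute}. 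A secondary subtlety is handling the two ``boundary'' phenomena ($n=2$ base case, and the alternating $(-1)^n$ correction term), which strongly suggests the auxiliary statistic $b_n$ satisfies a recursion coupling it to $a_{n-1}$ with a sign flip; verifying that the $(-1)^n$ really does survive into the final count, rather than cancelling, will require care with the small cases. If the product decomposition turns out to be cleaner than I anticipate, the whole proof could collapse to a short induction once \Cref{lem:product} is invoked; if not, the case analysis will resemble the (lengthy) proof of \Cref{prop:Tamari_characterization}.
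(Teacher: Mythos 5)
The statement you are addressing is not a theorem of the paper: it appears in the Open Problems section as a conjecture, and the authors offer no proof of it. So there is no ``paper's own proof'' for your argument to parallel or diverge from --- the formula $f_{n-2}+(-1)^n$ is, as far as the paper records, verified only empirically.

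Your proposal, as written, is a research plan rather than a proof, and the gap is the entire mathematical core. Every load-bearing step is deferred: you ``expect'' that principal order ideals $[\hat 0,w]$ factor as products over some decomposition of the Fibonacci word, you ``suspect'' that $\MM(w)$ has a block-product structure, and you ``anticipate'' a rational generating function --- but none of these is established, and the first two are genuinely nontrivial because the Young--Fibonacci lattice is not distributive, so $\bigwedge(\{w\}\cup T)$ cannot be computed by deleting maximal elements of an order ideal and must instead be worked out from an explicit combinatorial model of meets of Fibonacci words (which you correctly identify as the main obstacle but do not supply). Without step (i) --- the recursive Eeta/Atniss characterization --- steps (ii)--(iv) have nothing to operate on, and checking initial terms against ranks $2$ through $5$ would at best corroborate the conjecture, not prove it. The plan is a reasonable one and correctly identifies where the difficulty lies (the analogue of \Cref{prop:Tamari_characterization} and the auxiliary statistic playing the role of ``even-districted''), but to count this as a proof you would need to actually carry out the meet computation in $\mathrm{YF}$, prove the product decomposition you are hoping for, state and prove the characterization of indecomposable Eeta wins, and derive and solve the resulting recursion; none of that is present.
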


\begin{figure}[ht]
\[\hspace{-2.2cm}\begin{array}{l}\includegraphics[height=4.5cm]{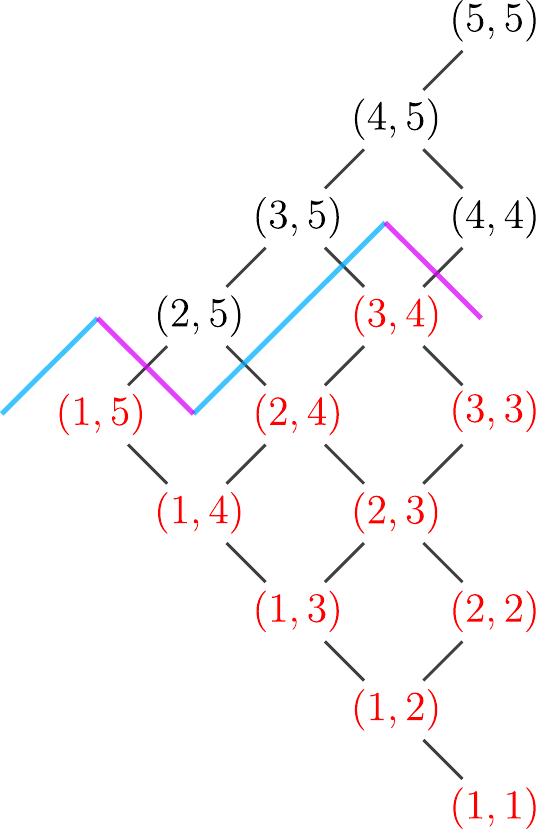}\end{array}\quad\longleftrightarrow \quad{\color{SkyBlue}1}{\color{MyPurple}0}{\color{SkyBlue}11}{\color{MyPurple}0}\]
\caption{An order ideal of the shifted staircase $\SSS_5$ is shown in {\color{red}red}. This order ideal is uniquely determined by a path of {\color{SkyBlue}up} and {\color{MyPurple}down} steps lying just above it, and that path corresponds to the length-$5$ binary string ${\color{SkyBlue}1}{\color{MyPurple}0}{\color{SkyBlue}11}{\color{MyPurple}0}$.}\label{fig:shifted_staricase}
\end{figure}

The $n$-th \dfn{shifted staircase} is the subposet $\SSS_n$ of $\mathbb N^2$ consisting of all pairs $(i,j)$ such that ${1\leq i\leq j\leq n}$. There is a natural bijection between order ideals of $\SSS_n$ and binary strings of length $n$; we illustrate this bijection for $n=5$ in \Cref{fig:shifted_staricase}. A \dfn{$0$-block} (respectively, \dfn{$1$-block}) in a binary string is a maximal consecutive substring of $0$'s (respectively, $1$'s). Note that $J(\SSS_n)$ is generally not isomorphic to an interval in Young's lattice, so we cannot apply \Cref{thm:Young} to understand its Atniss wins and Eeta wins. Nevertheless, the following characterization seems to hold.

\begin{conjecture}
An order ideal of $\SSS_n$ is an Eeta win in $J(\SSS_n)$ if and only if its corresponding length-$n$ binary string ends with $0$ and does not contain an odd-length $0$-block immediately followed by an odd-length $1$-block. 
\end{conjecture}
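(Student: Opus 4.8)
The plan is to transfer the problem to binary strings and then argue by induction on the size of the order ideal, closely following the structure of the proof of \Cref{thm:Young}. First I would pin down the combinatorial dictionary. Since $J(\SSS_n)$ is distributive, an Ungar move deletes a set of corners (maximal elements), and one checks that under the bijection of \Cref{fig:shifted_staricase} the empty ideal corresponds to the all-zeros string, the full ideal $\SSS_n$ to the all-ones string, the corners of the ideal coded by $w$ correspond to the $1$-blocks of $w$, and deleting the corner attached to a $1$-block $1^a$ acts on $w$ as follows: if $1^a$ is immediately followed inside $w$ by a $0$-block $0^q$, the junction $1^a0^q$ is replaced by $1^{a-1}\,0\,1\,0^{q-1}$ (the ``$10$'' at the junction is flipped to ``$01$''); if instead $1^a$ is the last block of $w$, then the final $1$ of $w$ is changed to $0$. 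These operations at distinct $1$-blocks do not interfere, so an Ungar move chooses a set of $1$-blocks and performs all the corresponding flips simultaneously. It then suffices to prove, by induction on $|I|$, that the ideal coded by $w$ is an Eeta win if and only if $w$ ends in $0$ and $w$ contains no odd-length $0$-block immediately followed by an odd-length $1$-block (call such a configuration a \emph{bad pattern}).

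For the ``if'' direction, assume $w$ ends in $0$ and has no bad pattern, and let $w'$ arise from $w$ by a nontrivial Ungar move; by the inductive hypothesis it is enough to show $w'$ ends in $1$ or $w'$ has a bad pattern. Because $w$ ends in $0$, the available moves flip some nonempty set of internal ``$10$'' junctions. A local analysis entirely parallel to the ``$E^aN^b\mapsto E^{a-1}NEN^{b-1}$'' bookkeeping in the proof of \Cref{thm:Young} shows that flipping the junction after a $1$-block $1^a$ sitting between $0^p$ and $0^q$ produces $0^p1^{a-1}$ followed by $0\,1\,0^{q-1}$; using that $w$ had no bad pattern (which pins down the parity of $p$ once $a$ is known) and tracking the merges that occur when $a=1$, $q=1$, or the affected $0$-block is the last block of $w$, one checks that every such flip, alone or in combination, leaves behind either a bad pattern or a trailing $1$. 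The hypothesis ``$w$ ends in $0$'' is exactly what forbids the move that trims a trailing $1$-block; that move would land on a string with no bad pattern, hence an Eeta win, which is why strings ending in $1$ fail to be Eeta wins.

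For the ``only if'' direction I would argue the contrapositive: if $w$ ends in $1$, or $w$ ends in $0$ but contains a bad pattern, then the ideal is an Atniss win, i.e.\ there is an Ungar move to an Eeta win. If $w$ ends in exactly one $1$, deleting that corner exposes no new corner, so \Cref{lem:deep_Atniss} applies immediately. In the remaining cases one mimics Cases 1--3 of the proof of \Cref{thm:Young}: peel off the leftmost block of $w$, apply the inductive characterization to the rest to locate an Eeta-winning move there, and then decide whether the peeled block can be left untouched or must be adjusted by one further corner deletion so that the combined move lands on a string that ends in $0$ and is bad-pattern-free; when $w$ ends in $1^a$ with $a\geq 2$ one additionally flips the junction of the final $1$-block (turning the tail $1^a$ into $1^{a-1}0$, so the string ends in $0$) and checks that the simultaneous flips clear any bad pattern created along the way. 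The rightmost subcases here correspond to Cases 2 and 3 of \Cref{thm:Young}, now complicated by the asymmetry between the two ends of the string.

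The main obstacle I anticipate is the ``if'' direction: one must verify that no simultaneous collection of junction-flips can escape producing a bad pattern or a trailing $1$, and the parity accounting is delicate because a flip can push its new ``$01$'' into a neighboring $1$-block (changing that block's parity), into the left end of the string, or into the trailing $0$-block, and several flips can compound these effects. Confirming that the ``no bad pattern'' hypothesis rules out each escape route, and that ``$w$ ends in $0$'' handles the one remaining route (the trailing trim), is where the real work lies; by contrast, the ``only if'' direction should go through by essentially the same peeling induction used for \Cref{thm:Young}, with only bookkeeping overhead from the two extra conditions.
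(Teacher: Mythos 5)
The paper offers no proof of this statement---it is stated as an open conjecture---so there is nothing of the authors' to compare your argument against; I can only assess the proposal on its own terms. The fatal problem is your very first step, the dictionary between corner deletions and operations on the binary string. Test it on $\SSS_3$: the lattice $J(\SSS_3)$ is a three-element chain, then a diamond, then a three-element chain (eight elements in all), and a direct computation shows its Eeta wins sit exactly at ranks $0$, $2$, and $5$. Under your rules the string $010$ has a single corner (the $1$-block in position $2$); deleting it flips the junction to give $001$, which in turn moves only to $000$, so the principal order ideal below $010$ would be a three-element chain and $010$ would be an Eeta win---even though $010$ contains an odd $0$-block immediately followed by an odd $1$-block. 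Dually, your rules place $100$ at the top of the four-element chain $100>010>001>000$, making it an Atniss win, even though $100$ satisfies the conjectured condition. So the lattice your dictionary generates on $\{0,1\}^3$ is isomorphic to $J(\SSS_3)$ as an abstract lattice, but its labelling by strings is not the one of \cref{fig:shifted_staricase}, and the conjectured pattern condition is false for your labelling. Everything downstream inherits this error.

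The source of the mistake is that you have transplanted the Young's-lattice bookkeeping from the proof of \cref{thm:Young} (corner deletion replaces $\E^a\N^b$ by $\E^{a-1}\N\E\N^{b-1}$, with an ad hoc rule at one end of the word) onto the shifted staircase. But $\SSS_n$ is not a skew shape in Young's lattice: a diagonal cell $(i,i)$ covers only the single cell $(i-1,i)$, so the cover structure---and hence the effect of corner deletion on the length-$n$ string of \cref{fig:shifted_staricase}---is genuinely different near the diagonal, which is precisely what produces the asymmetric ``ends with $0$'' condition in the statement. Before any parity analysis can begin you would need to derive and verify (against small cases such as $n=3$) the correct local rules for how deleting a set of corners acts on the paper's encoding, including the special behaviour at the diagonal end. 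Beyond that, even granting a correct dictionary, the proposal defers the entire content of both directions to ``one checks'': the simultaneous-flip case analysis that you yourself identify as where the real work lies is exactly the part that would constitute a proof, and it is absent.
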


\subsection{Complexity}
It is natural to consider Ungar games from the point of view of complexity theory.  A \dfn{boolean formula} is an expression on $n$ boolean inputs using the usual binary operations $\lorb$ and $\landb$ and the unary operation $\negb$.  We are interested in the class of boolean formulas whose truth value can be computed with a circuit of depth $O(\log(n))$~\cite{Barrington}.  A decision problem is called \dfn{$\mathsf{NC}^1$-hard} if any such formula is linearly reducible to it. 

In~\cite{kalinich2012flipping}, Kalinich showed that poset games are $\mathsf{NC}^1$-hard.  We can adapt this argument to Ungar games.

\begin{theorem}
Ungar games are $\mathsf{NC}^1$-hard.
\end{theorem}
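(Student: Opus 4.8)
The plan is to adapt Kalinich's reduction \cite{kalinich2012flipping} from the evaluation of a boolean formula to a poset game, replacing poset games with Ungar games on the corresponding distributive lattice $J(P)$. Recall from \Cref{sec:basics} that an Ungar move on $J(P)$ sends an order ideal $I$ to $I\setminus T$ for some $T\subseteq\max(I)$; a single corner-removal $I\mapsto I\setminus\{x\}$ (with $x\in\max(I)$) is thus a legal nontrivial Ungar move, so every poset-game move is available in the Ungar game. The extra Ungar moves (removing several maximal elements at once) are exactly what we must control. The key observation is that \Cref{lem:deep_Atniss} gives us a ``local'' way to neutralize a maximal element: if $x\in\max(\lambda)$ and $\max(\lambda\setminus\{x\})=\max(\lambda)\setminus\{x\}$ — i.e. $x$ covers no element that is covered only by $x$ — then $\lambda$ is automatically an Atniss win. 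So the strategy is to build gadgets in which the ``dangerous'' multi-element Ungar moves can always be answered, reducing the analysis to the poset-game analysis Kalinich already carried out.

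Concretely, I would first recall Kalinich's construction: he builds, from a formula $\varphi$, a poset $P_\varphi$ of size polynomial (indeed, the reduction is an $\mathsf{NC}^1$ reduction) in which the first player of the poset game on $P_\varphi$ has a winning strategy if and only if $\varphi$ is true. His poset is assembled from small gadgets corresponding to the gates $\lorb$, $\landb$, $\negb$ and to the input literals, glued along a central ``spine.'' Second, I would inspect each gadget and modify it — adding ``padding'' chains or duplicated maximal elements — so that whenever a player makes a nontrivial Ungar move that removes a set $T$ of maximal elements with $|T|\ge 2$, the resulting position falls into the hypothesis of \Cref{lem:deep_Atniss} for the opponent, i.e. the opponent has a response that turns it into an Eeta win (equivalently, the mover has handed the opponent a won position, so no rational player makes such a move). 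The point is that in a distributive lattice built this way, a ``greedy'' simultaneous removal of incomparable corners destroys the delicate parity/gadget structure that makes formula-evaluation work, and can always be punished; hence optimal play in the Ungar game on the padded poset coincides with optimal play in the poset game on $P_\varphi$.

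The three steps in order: (i) state precisely the gadget modification and verify, gadget by gadget, that any Ungar move removing $\ge 2$ maximal elements lands in a position satisfying $\max(\nu\setminus\{x\})=\max(\nu)\setminus\{x\}$ for some $x$, hence is an Atniss win for the player to move next (so such moves are never made under optimal play, and the game reduces to single-corner removals); (ii) check that a single-corner removal in the padded lattice $J(P_\varphi')$ corresponds exactly to a poset-game move in $P_\varphi$, so the winner of the Ungar game on $J(P_\varphi')$ equals the winner of the poset game on $P_\varphi$, which by Kalinich equals the truth value of $\varphi$; (iii) verify that the map $\varphi\mapsto J(P_\varphi')$ (or rather its presentation: the reduction only needs to output $P_\varphi'$, since an Ungar game is specified by a poset when the lattice is $J(P_\varphi')$) is computable by a depth-$O(\log n)$ circuit, which follows since Kalinich's reduction is, and the padding is a trivial local rewrite of each gadget. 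The main obstacle is step (i): one must be careful that the parity gadgets Kalinich uses — whose whole function depends on a specific number of available moves — still behave correctly after padding, and in particular that the padding does not create a \emph{new} winning simultaneous-removal move for the player who is supposed to lose. I expect this to require choosing the padding chains to have length $2$ (so that removing a padded corner exposes exactly one fresh maximal element, keeping \Cref{lem:deep_Atniss} applicable) and duplicating each ``live'' corner of a gadget, so that any attempt to clear two corners at once always leaves a twin that witnesses the \Cref{lem:deep_Atniss} condition for the opponent; verifying this invariant through the $\landb$/$\lorb$ glue is the delicate case-check, but it is finite and local, hence routine once set up.
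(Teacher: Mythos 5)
There is a genuine gap here, and it sits exactly where you flag the ``main obstacle,'' but it is worse than a delicate case-check: the reduction you propose cannot work even in principle. First, your opening claim that ``every poset-game move is available in the Ungar game'' is false. A poset-game move removes the principal \emph{upset} of an arbitrary element of the current poset, which in general contains non-maximal elements; an Ungar move on $J(P)$ removes only a subset of $\max(I)$. So single-corner removals capture only a small fragment of the poset game's moves, not all of them. Second, and fatally: suppose you succeeded in step (i), so that under optimal play every move removes exactly one maximal element. Then every play of the game on $J(P_\varphi')$ lasts exactly $|P_\varphi'|$ moves, so the winner is determined by the parity of $|P_\varphi'|$ alone and cannot encode the truth value of $\varphi$. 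In other words, forcing the game down to single-corner removals does not recover Kalinich's poset game (whose expressive power comes precisely from removing large upsets); it collapses the Ungar game to a parity count. Your step (ii) is therefore unsalvageable as stated. The paper itself remarks, in connection with \Cref{q:pspace}, that transferring hardness arguments from poset games to Ungar games on distributive lattices is not straightforward --- this is that difficulty.

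The paper's actual proof sidesteps poset games entirely: it constructs explicit small \emph{lattice} gadgets (general lattices presented by Hasse diagrams, not of the form $J(P)$ for a padded Kalinich poset) realizing the $\lorb$ and $\negb$ of games, with the $1$-element lattice encoding true (an Eeta win) and the $2$-element lattice encoding false (an Atniss win); see \Cref{fig:vee_and_not}. Composing these along the formula tree costs $O(1)$ elements per gate ($7$ per $\lorb$, $1$ per $\negb$), so a formula of depth $O(\log n)$ yields a lattice with $n^{O(1)}$ elements, and one checks by induction that the construction is a lattice whose Ungar game is an Eeta win if and only if the formula evaluates to $1$. To salvage your approach you would need either to analyze gadgets directly against the full Ungar move set (which is what the paper does), or to find an encoding in which simultaneous corner removals carry the logical content rather than being noise to be suppressed.
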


\begin{proof}
As in~\cite{kalinich2012flipping}, we show that we can construct Ungar games that encode the boolean formula value problem with only linear blowup---that is, we produce a lattice that is an Eeta win if and only if the formula evaluates to 1 using the given inputs. 

We represent posets as Hasse diagrams so that the data for a poset is polynomial in the number of its elements.  Noting that the lattice with $1$ element is an Eeta win and the lattice with $2$ elements is an Atniss win, we see that it suffices to construct the $\lorb$ of two games and the 
$\negb$ of a game.  This is carried out in~\Cref{fig:vee_and_not}, at the expense of $7$ extra elements per $\lorb$ and $1$ extra element per $\negb$.  

By our assumption that the depth of the given formula is $O(\log(n))$, the resulting poset has $n^{O(1)}$ elements. It is straightforward to see by induction that this poset is indeed a lattice. 
\end{proof}
    
\begin{figure}[htbp]
        \raisebox{-.5\height}{\scalebox{1}{\begin{tikzpicture}[scale=1]
        \node[shape=circle,fill=black, scale=0.5] (0) at (0,0) {};
        \node[shape=circle,fill=black, scale=0.5] (1) at (-1,1) {};
        \node[shape=circle,fill=black, scale=0.5] (2) at (1,1) {};
        \node[shape=circle,draw] (3) at (-2,2) {$x$};
        \node[shape=circle,draw] (4) at (2,2) {$y$};
        \node[shape=circle,fill=black, scale=0.5] (5) at (0,2) {};
        \node[shape=circle,fill=black, scale=0.5] (6) at (-1,3) {};
        \node[shape=circle,fill=black, scale=0.5] (7) at (1,3) {};
        \node[shape=circle,fill=black, scale=0.5] (8) at (0,4) {};
        \draw[very thick] (0) to (1) to (3);
        \draw[very thick] (3) to (6) to (8);
        \draw[very thick] (0) to (2) to (4);
        \draw[very thick] (4) to (7) to (8);
        \draw[very thick] (0) to (5) to (6);
        \draw[very thick] (5) to (7);
        \end{tikzpicture}}} \hspace{3em} \raisebox{-.5\height}{\scalebox{1}{\begin{tikzpicture}[scale=1]
        \node[shape=circle,fill=black, scale=0.5] (1) at (0,1) {};
        \node[shape=circle,draw] (0) at (0,0) {$x$};
        \draw[very thick] (0) to (1);
        \end{tikzpicture}}}\hspace{3em}
        \raisebox{-.5\height}{\scalebox{.5}{\begin{tikzpicture}[scale=1]
        \node[shape=circle,fill=black, scale=0.5] (0) at (0,0) {};
        \node[shape=circle,fill=black, scale=0.5] (1) at (-1,1) {};
        \node[shape=circle,fill=black, scale=0.5] (2) at (-2,2) {};
        \node[shape=circle,fill=black, scale=0.5] (3) at (-3,3) {};
        \node[shape=circle,fill=black, scale=0.5] (4) at (-4,4) {};
        \node[shape=circle,fill=black, scale=0.5] (5) at (-4.5,5) {};
        \node[shape=circle,draw] (6) at (-5,6) {$x_1$};
        \node[shape=circle,fill=black, scale=0.5] (7) at (-4.5,7) {};
        \node[shape=circle,fill=black, scale=0.5] (8) at (-4,8) {};
        \node[shape=circle,fill=black, scale=0.5] (9) at (-3,9) {};
        \node[shape=circle,fill=black, scale=0.5] (10) at (-2,10) {};
        \node[shape=circle,fill=black, scale=0.5] (11) at (-2,11) {};
        \node[shape=circle,fill=black, scale=0.5] (12) at (-1,12) {};
        \node[shape=circle,fill=black, scale=0.5] (13) at (0,13) {};
        \node[shape=circle,fill=black, scale=0.5] (14) at (0,14) {};
        \node[shape=circle,fill=black, scale=0.5] (15) at (-1,9) {};
        \node[shape=circle,fill=black, scale=0.5] (16) at (-2,8) {};
        \node[shape=circle,draw] (17) at (-1,6) {$x_3$};
        \node[shape=circle,fill=black, scale=0.5] (17a) at (-1,7) {};
        \node[shape=circle,fill=black, scale=0.5] (18) at (-1,4) {};
        \node[shape=circle,fill=black, scale=0.5] (19) at (-3.5,7) {};
        \node[shape=circle,draw] (20) at (-3,6) {$x_2$};
        \node[shape=circle,fill=black, scale=0.5] (21) at (-4,6) {};
        \node[shape=circle,fill=black, scale=0.5] (22) at (3,9) {};
        \node[shape=circle,fill=black, scale=0.5] (23) at (3,8) {};
        \node[shape=circle,fill=black, scale=0.5] (24) at (3.5,7) {};
        \node[shape=circle,fill=black, scale=0.5] (25) at (2.5,7) {};
        \node[shape=circle,fill=black, scale=0.5] (26) at (3,6) {};
        \node[shape=circle,draw] (27) at (4,6) {$x_5$};
        \node[shape=circle,draw] (28) at (2,6) {$x_4$};
        \node[shape=circle,fill=black, scale=0.5] (29) at (2.5,5) {};
        \node[shape=circle,fill=black, scale=0.5] (30) at (3.5,5) {};
        \node[shape=circle,fill=black, scale=0.5] (31) at (3,4) {};
        \node[shape=circle,fill=black, scale=0.5] (32) at (1.5,2) {};
        \node[shape=circle,fill=black, scale=0.5] (33) at (-3.5,5) {};
        \node[shape=circle,fill=black, scale=0.5] (34) at (1,12) {};
        \node[shape=circle,fill=black, scale=0.5] (35) at (0,11) {};
        \draw[very thick] (0) to (1) to (2) to (3) to (4) to (5) to (6) to (7) to (8) to (9) to (10) to (11) to (12) to (13) to (14);
        \draw[very thick] (2) to (18) to (17) to (17a) to (15) to (10);
        \draw[very thick] (2) to (16) to (15);
        \draw[very thick] (16) to (9);
        \draw[very thick] (4) to (33) to (20) to (19) to (8);
        \draw[very thick] (4) to (21) to (19);
        \draw[very thick] (21) to (7);
        \draw[very thick] (0) to (35) to (34);
        \draw[very thick] (35) to (12);
        \draw[very thick] (0) to (32) to (31) to (30) to (27) to (24) to (23) to (22) to (34) to (13);
        \draw[very thick] (31) to (29) to (28) to (25) to (23);
        \draw[very thick] (31) to (26) to (25);
        \draw[very thick] (26) to (24);
        \end{tikzpicture}}}

    \caption{Left: a lattice encoding the boolean formula $x\, \lorb \, y$; middle: a lattice encoding $\negb \, x$; right: a lattice encoding $(x_1  \, \lorb  \, x_2  \, \lorb  \, (\negb \, x_3))  \, \landb  \, (x_4  \, \lorb  \, x_5)$.  Each variable should be replaced by the $1$-element lattice (corresponding to setting the variable to $1$) or the $2$-element lattice (corresponding to setting the variable to $0$).}
    \label{fig:vee_and_not}
    \end{figure}
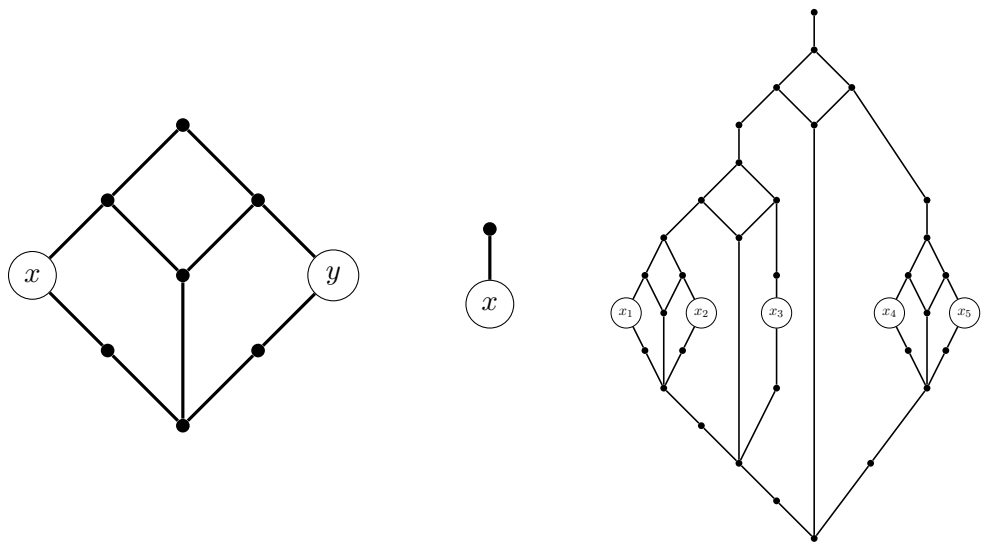

By analogy with poset games, it is reasonable to consider Ungar games on distributive lattices $J(P)$, so that the game can be played on $P$ itself.  Building on work of Schaeffer~\cite{schaefer1978complexity}, Grier proved that poset games are PSPACE-complete~\cite{grier2013deciding}.  It is not so easy to adapt Grier's argument from poset games to Ungar games on distributive lattices.

\begin{question}\label{q:pspace}
Are Ungar games on distributive lattices $J(P)$ $\PSPACE$-complete in $|P|$? 
\end{question}

\section*{Acknowledgements}
Colin Defant was supported by the National Science Foundation under Award No.\ 2201907 and by a Benjamin Peirce Fellowship at Harvard University.  Noah Kravitz was supported in part by an NSF Graduate Research Fellowship  (grant DGE--2039656).  Nathan Williams was partially supported by the National Science Foundation under Award No.\ 2246877.  We are grateful to Evan Bailey for disproving the conjecture discussed in \Cref{sec:open_weak} and providing useful feedback on \Cref{q:pspace}; we thank Jay Pantone for helpful conversations. We also thank the anonymous referees for providing helpful suggestions.

\end{document}